\documentclass[a4paper,11pt]{article}
\usepackage{amsfonts,amssymb,amsmath, dsfont, graphicx}
\usepackage{bm}
\usepackage{mathrsfs}  
\usepackage{color}
\usepackage{upgreek}
\usepackage[english]{babel}
\selectlanguage{english}
\usepackage{graphicx}
\usepackage{microtype}
\usepackage[colorlinks=true, pdfstartview=FitV, linkcolor=blue, citecolor=blue, urlcolor=blue,pagebackref=false]{hyperref}


\topmargin -0.5in
\textheight 9.5in
\oddsidemargin 0.15in
\evensidemargin 0.25in
\textwidth 6.15in

\definecolor{darkred}{rgb}{0.9,0.1,0.1}


 \makeatletter
 \@addtoreset{equation}{section}
 \makeatother

 \makeatletter
 \@addtoreset{enunciato}{section}
 \makeatother

 \newcounter{enunciato}[section]

 \newtheorem{ittheorem}{Theorem}
 \newtheorem{itlemma}{Lemma}
 \newtheorem{itproposition}{Proposition}
 \newtheorem{itcorollary}{Corollary}
 \newtheorem{itdefinition}{Definition}
 \newtheorem{itremark}{Remark}
 \newtheorem{itclaim}{Claim}
 \newtheorem{itfact}{Fact}
 \newtheorem{itconjecture}{Conjecture}

 \newenvironment{theorem}{\addtocounter{enunciato}{1}
 \begin{ittheorem}}{\end{ittheorem}}

 \newenvironment{lemma}{\addtocounter{enunciato}{1}
 \begin{itlemma}}{\end{itlemma}}

 \newenvironment{proposition}{\addtocounter{enunciato}{1}
 \begin{itproposition}}{\end{itproposition}}

 \newenvironment{corollary}{\addtocounter{enunciato}{1}
 \begin{itcorollary}}{\end{itcorollary}}

 \newenvironment{definition}{\addtocounter{enunciato}{1}
 \begin{itdefinition}}{\end{itdefinition}}

 \newenvironment{remark}{\addtocounter{enunciato}{1}
 \begin{itremark}}{\end{itremark}}

 \newenvironment{claim}{\addtocounter{enunciato}{1}
 \begin{itclaim}}{\end{itclaim}}

 \newenvironment{fact}{\addtocounter{enunciato}{1}
 \begin{itfact}}{\end{itfact}}

 \newenvironment{conjecture}{\addtocounter{enunciato}{1}
 \begin{itconjecture}}{\end{itconjecture}}

 \newcommand{\be}[1]{\begin{equation}\label{#1}}
 \newcommand{\ee}{\end{equation}}

 \newcommand{\bl}[1]{\begin{lemma}\label{#1}}
 \newcommand{\el}{\end{lemma}}

 \newcommand{\br}[1]{\begin{remark}\label{#1}}
 \newcommand{\er}{\end{remark}}

 \newcommand{\bt}[1]{\begin{theorem}\label{#1}}
 \newcommand{\et}{\end{theorem}}

 \newcommand{\bd}[1]{\begin{definition}\label{#1}}
 \newcommand{\ed}{\end{definition}}

 \newcommand{\bcl}[1]{\begin{claim}\label{#1}}
 \newcommand{\ecl}{\end{claim}}

 \newcommand{\bfact}[1]{\begin{fact}\label{#1}}
 \newcommand{\efact}{\end{fact}}

 \newcommand{\bp}[1]{\begin{proposition}\label{#1}}
 \newcommand{\ep}{\end{proposition}}

 \newcommand{\bc}[1]{\begin{corollary}\label{#1}}
 \newcommand{\ec}{\end{corollary}}

 \newcommand{\bcj}[1]{\begin{conjecture}\label{#1}}
 \newcommand{\ecj}{\end{conjecture}}

 \newcommand{\bpr}{\begin{proof}}
 \newcommand{\epr}{\end{proof}}

 \newcommand{\bprlem}[1]{\begin{proofof}{\it Lemma \ref{#1}}.\,\,}
 \newcommand{\eprlem}{\end{proofof}}

 \newcommand{\bprthm}[1]{\begin{proofof}{\it Theorem \ref{#1}}.\,\,}
 \newcommand{\eprthm}{\end{proofof}}

 \newcommand{\bprprop}[1]{\begin{proofof}{\it Proposition \ref{#1}}.\,\,}
 \newcommand{\eprprop}{\end{proofof}}

 \newcommand{\bi}{\begin{itemize}}
 \newcommand{\ei}{\end{itemize}}

 \newcommand{\ben}{\begin{enumerate}}
 \newcommand{\een}{\end{enumerate}}


 \newenvironment{proof}{\noindent {\em Proof}.\,\,}{\hspace*{\fill}$\halmos$\medskip}
 \newenvironment{proofof}{\noindent {\em Proof of\,\,}}{\hspace*{\fill}$\halmos$\medskip}
 \newcommand{\halmos}{\rule{1ex}{1.4ex}}

 \parskip=3pt plus 1pt minus 1pt

 \newcommand{\one}{{\mathchoice {1\mskip-4mu\mathrm l}
         {1\mskip-4mu\mathrm l}
         {1\mskip-4.5mu\mathrm l}
         {1\mskip-5mu\mathrm l}}}


\def \E {{\mathbb E}}
\def \L {{\mathbb L}}
\def \N {{\mathbb N}}
\def \P {{\mathbb P}}

\def \R {{\mathbb R}}
\def \Z {{\mathbb Z}}
\def \T {{\mathbb T}}

\def \lra \leftrightarrow

\def \ra {\rightarrow}
\def \ba {\begin{array}}
\def \ea {\end{array}}

\def \lra {\longrightarrow}

\def \T {{\mathbb{T}}}
\def \lra {{\leftrightarrow}}
\def \A {\mathscr{A}}

\def \un {\underline}
\def \ov {\overline}
\def \td {\tilde}
\def \Ll {\left}
\def \Rr {\right}
\def \subset {\subseteq}
\def \supset {\supseteq}

\def \mcl {\mathcal}

\def\one{\rlap{\mbox{\small\rm 1}}\kern.15em 1}


\begin{document}
\title{The contact process on finite homogeneous trees revisited}

\author{Michael Cranston\textsuperscript{1}, Thomas Mountford\textsuperscript{2}, Jean-Christophe Mourrat\textsuperscript{2,3}, Daniel Valesin\textsuperscript{4}}

\footnotetext[1]{University of California, Irvine}
\footnotetext[2]{\'Ecole Polytechnique F\'ed\'erale de Lausanne}
\footnotetext[3]{ENS Lyon, CNRS}
\footnotetext[4]{University of British Columbia}
\date{\today}
\maketitle

\begin{abstract}
We consider the contact process with infection rate $\lambda$ on $\T_n^d$, the $d$-ary tree of height $n$. We study the extinction time $\uptau_{\T_n^d}$, that is, the random time it takes for the infection to disappear when the process is started from full occupancy. We prove two conjectures of Stacey regarding $\uptau_{\T_n^d}$. Let $\lambda_2$ denote the upper critical value for the contact process on the infinite $d$-ary tree. First, if $\lambda < \lambda_2$, then $\uptau_{\T_n^d}$ divided by the height of the tree converges in probability, as $n \to \infty$, to a positive constant. Second, if $\lambda > \lambda_2$, then $\log \E[\uptau_{\T_n^d}]$ divided by the volume of the tree converges in probability to a positive constant, and $\uptau_{\T_n^d}/\E[\uptau_{\T_n^d}]$ converges in distribution to the exponential distribution of mean 1.


\end{abstract}

{\bf\large{}}\bigskip

\section{Introduction}
Let $G = (V, E)$ be a graph of bounded degree and $\lambda > 0$. The contact process on $G$ with infection rate $\lambda$ is the Markov process with state space $\{0,1\}^V$ and infinitesimal generator $\mathcal{L}$ defined, for a real function $f$ that depends on the restriction of $\xi$ to a finite subset of $V$, by
$$\mathcal{L}f(\xi) = \sum_{\substack{x \in V}} [f(\xi^{0 \to x}) - f(\xi)] + \lambda\sum_{\substack{\{x,y\} \in E:\\\xi(y) = 1}} [f(\xi^{1 \to x}) - f(\xi)], \text{ where } \xi^{i \to x}(z) = \left\{\begin{array}{ll}i &\text{if } z = x;\\ \xi(z) &\text{if } z \neq x. \end{array} \right.$$
In the usual interpretation, vertices are individuals in a population, and individuals in state 0 and 1 are healthy and infected, respectively. The dynamics given by the above generator then means that infected individuals heal at rate 1 and transmit the infection at rate $\lambda$ to each neighbour. In this Introduction, we will briefly review some properties of the contact process, and refer the reader to \cite{lig99} for a thorough exposition.

For a subset $A \subset V$, we will write $(\xi^A_t)_{t \geq 0}$ to denote the contact process on $G$ with initial configuration $\xi^A_0 = I_A$, where $I$ is the indicator function. If $A = \{x\}$, we write $(\xi^x_t)$ instead of $\left(\xi^{\{x\}}_t\right)$. When the superscript is omitted, the initial configuration of the process will be either clear from the context or unimportant. As is usual, we will abuse notation and sometimes treat $\xi \in \{0,1\}^V$ as the set $\{x \in V: \xi(x) = 1\}$.

The configuration in which all vertices are in state 0, denoted $\varnothing$, is absorbing for the contact process. For $A\subset G$, we define the extinction time for the contact process on $G$ with initial infected set $A$ by
$$\uptau_A = \inf\{t: \xi^A_t = \varnothing\}.$$

The contact process with rate $\lambda$ on $G$ is said to die out if $\P[\uptau_{\{x\}} < \infty] = 1$, and to survive otherwise. In case it survives, it is said to survive weakly (or globally but not locally) if {$\displaystyle \P\left[\limsup_{t \to \infty} \xi^x_t(x) = 1\right]=0$}, and to survive strongly (or locally) if this probability is positive. These definitions do not depend on the choice of $x$ provided that $G$ is connected. Let 
$$\begin{aligned}&\lambda_1 = \lambda_1(G) = \sup\{\lambda: \text{the contact process with parameter $\lambda$ on $G$ dies out}\};\\
&\lambda_2 = \lambda_2(G) = \inf\{\lambda: \text{the contact process with parameter $\lambda$ on $G$ survives locally}\}.
\end{aligned}$$
It is known that for $G = \Z^d$, the $d$ dimensional integer lattice, $0 < \lambda_1 = \lambda_2 < \infty$ and the process dies out at the critical point. For $G = \T^d$, the infinite tree in which all vertices have degree $d + 1$ (where $d \ge 2$), $0 < \lambda_1 < \lambda_2 < \infty$ and the process dies out at $\lambda_1$ and survives globally, but not locally, at $\lambda_2$.

An important and interesting question about the contact process is as follows. Suppose $G$ is an infinite graph, $(G_n)$ is a sequence of finite graphs with $G_n \nearrow G$, $(\xi)$ is the contact process on $G$ and $(\xi^n)$ is the contact process on $G_n$. If $n$ is large, does the behaviour of $(\xi^n)$ in any way resemble the behaviour of $(\xi)$? In particular, if $(\xi)$ has a phase transition with respect to the parameter $\lambda$, can this phase transition be identified in $(\xi^n)$ as well? Of course, since the contact process dies out on any finite graph for any value of $\lambda$, 
 the different regimes corresponding to different values of $\lambda$ cannot be defined for $(\xi^n)$ the same way they are defined for $(\xi)$. With this in mind, one considers the extinction time $\uptau_{G_n}$ and tries to determine if, asymptotically as $n \to \infty$, the law of $\uptau_{G_n}$ has different aspects depending on the value of $\lambda$.

For the case in which $G = \Z^d$ and $G_n$ is a box of $\Z^d$ with side length $n$, this has been thoroughly studied. Put briefly, it is known that if $\lambda < \lambda_1(\Z^d)$, then $\uptau_{G_n}$ grows logarithmically with the volume of $G_n$, whereas if $\lambda > \lambda_1(\Z^d)$, then $\uptau_{G_n}$ grows exponentially with the volume of $G_n$. These results are contained in \cite{eulalia}, \cite{schonmeta}, \cite{durliu}, \cite{chencp}, \cite{dursc}, \cite{tommeta}, \cite{tomexp}; see Section I.3 of \cite{lig99} for an overview.

This paper is concerned with the corresponding study for $G = \T^d$, which was initiated by Stacey in \cite{St}. Stacey's choice of finite subgraph $G_n$ was the tree $\T^d_n$, the rooted tree of height~$n$ in which all non-leaf vertices have $d$ descendants (more precisely: $\T_n^d$ has a distinguished vertex, called the root and denoted by $o$, with degree $d$; all vertices at graph distance between 1 and $n-1$ from $o$ have degree $d+1$, and vertices at distance $n$ from $o$ have degree~1). Stacey proved 
\begin{theorem}\label{thm:stsub} \cite{St} If $\lambda < \lambda_2$, then there exist $c,C \in (0,\infty)$ such that
$$
{\lim_{n \to \infty}\P\Ll[c n < \uptau_{\T_n^d}<Cn \Rr]} = 1.
$$
\end{theorem}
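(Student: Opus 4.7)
I would treat the lower bound ($\uptau_{\T_n^d} > cn$ w.h.p.) and the upper bound ($\uptau_{\T_n^d} < Cn$ w.h.p.) separately; the hypothesis $\lambda < \lambda_2$ is only needed for the upper bound.

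For the lower bound, I would compare the contact process to the ``pure healing'' dynamics (rate $0$) via the graphical construction: after discarding the infection arrows, pure healing is stochastically dominated by the rate-$\lambda$ contact process started from the same configuration. From full occupancy, pure healing extinguishes at $\max_{x\in\T_n^d} E_x$, the maximum of $|\T_n^d|=\Theta(d^n)$ i.i.d.\ Exp$(1)$ clocks, which concentrates near $\log|\T_n^d|\sim n\log d$. Thus $\uptau_{\T_n^d}\ge cn$ w.h.p.\ for any $c<\log d$, and this half of the argument does not use $\lambda<\lambda_2$.

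For the upper bound, the starting point is self-duality combined with a union bound:
$$
\P\Ll[\uptau_{\T_n^d}>t\Rr]=\P\Ll[\xi^{\T_n^d}_t\ne\varnothing\Rr]\le\sum_{x\in\T_n^d}\P\Ll[\xi^{\T_n^d}_t(x)=1\Rr]=\sum_{x\in\T_n^d}\P\Ll[\xi^x_t\ne\varnothing\Rr],
$$
all processes running on $\T_n^d$. To overcome the prefactor $|\T_n^d|\sim d^n$, it suffices to prove uniform exponential decay $\P[\xi^x_t\ne\varnothing\text{ on }\T_n^d]\le A\,e^{-\gamma t}$ with $\gamma>0$ depending only on $\lambda$, and then take $t=Cn$ with $\gamma C>\log d$. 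For $\lambda<\lambda_1$ this is automatic: the bound already holds on the infinite tree $\T^d$ and transfers to $\T_n^d$ by monotonicity. The substantive case is $\lambda_1\le\lambda<\lambda_2$, where the process survives globally on $\T^d$ so subgraph monotonicity alone is useless. Here I would use the coupling $\xi^{x,\T_n^d}_t\subset\xi^{x,\T^d}_t\cap\T_n^d$ and the inclusion $\T_n^d\subset B(x,2n)$, together with an ``escape'' estimate on $\T^d$ of the form $\P[\xi^{x,\T^d}_t\cap B(x,vt)\ne\varnothing]\le A\,e^{-\gamma t}$ for some $v,\gamma>0$; once $vt\ge 2n$ this forces $\xi^{x,\T_n^d}_t=\varnothing$, and the union bound closes the argument provided $C\ge\max\{2/v,(\log d)/\gamma\}+1$.

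The main obstacle is precisely the escape estimate in the regime $\lambda_1\le\lambda<\lambda_2$: it is a quantitative strengthening of the absence of local survival at $\lambda_2$, and is typically reached either by a renormalization/block construction on the tree (in the spirit of Pemantle's analysis of $\lambda_2$) or by exponential decay of the time-integrated two-point function (a Simon--Lieb-type inequality). Either route uses $\lambda<\lambda_2$ together with the translation invariance of $\T^d$ in an essential way.
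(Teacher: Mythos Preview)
The paper does not prove this result; it is quoted from Stacey~\cite{St} and then used as a black box in the proof of Theorem~\ref{thm:mainL}. Your outline is correct and close in spirit to Stacey's short argument, with one simplification available that you did not spot.

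Your lower bound via the maximum of $|\T_n^d|$ independent $\mathrm{Exp}(1)$ recovery clocks is the standard one and matches the usual proof. For the upper bound, your duality-plus-union-bound reduction is right, but the ballistic ``escape estimate'' you aim for is an unnecessary detour. On $\T^d$, set $\rho_t=\P[\xi^o_t(o)=1]$; independence of the Harris system over disjoint time intervals gives $\rho_{s+t}\ge\rho_s\rho_t$, so $\beta:=\sup_t\rho_t^{1/t}=\lim_t\rho_t^{1/t}$ exists and $\rho_t\le\beta^t$ for all $t$. The one substantive input from the theory of the tree contact process is that $\lambda<\lambda_2$ forces $\beta<1$; this is precisely the quantitative form of ``no local survival'' on the homogeneous tree (see~\cite{lig99,St}). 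A second supermultiplicativity, using the automorphism of $\T^d$ that swaps $o$ and $y$, yields $\rho_{2t}\ge\P[\xi^o_t(y)=1]^2$, hence the \emph{uniform} two-point bound $\P[\xi^x_t(y)=1]\le\beta^t$ for all $x,y\in\T^d$. Dominating the finite-tree process by the infinite one and summing,
\[
\P[\uptau_{\T_n^d}>t]\le\sum_{x,y\in\T_n^d}\P[\xi^x_t(y)=1\text{ on }\T^d]\le|\T_n^d|^2\,\beta^{t},
\]
so $t=Cn$ with $C>2(\log d)/\log(1/\beta)$ suffices; this is the ``very short'' proof the paper alludes to. Your escape estimate $\P[\xi^{o,\T^d}_t\cap B(o,vt)\neq\varnothing]\le Ae^{-\gamma t}$ actually \emph{follows} from the uniform two-point bound (sum over the $\sim d^{vt}$ vertices of $B(o,vt)$ and take $v<\log(1/\beta)/\log d$), so your route is consistent but adds a layer; the Simon--Lieb or block-renormalization machinery you mention is heavier than what the homogeneous tree requires.
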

In other words, with high probability as $n \to \infty$, the extinction time is between fixed multiples of the logarithm of the volume of the tree. Stacey also conjectured that this result could be improved to the statement that, if $\lambda < \lambda_2$, then $\uptau_{\T_n^d}/n$ converges in probability to a constant. In this paper we confirm this conjecture:
\begin{theorem}
\label{thm:mainL}
If $\lambda < \lambda_2$, then there exists $c \in (0,\infty)$ such that 
$$
\frac{\uptau_{\T_n^d}}{n}\xrightarrow[n \to \infty]{\text{(prob.)}} c.
$$
\end{theorem}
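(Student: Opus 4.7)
The plan is to upgrade Stacey's tightness estimate (Theorem~\ref{thm:stsub}) to convergence in probability by combining two ingredients: (i) existence of the deterministic limit $c := \lim_n \E[\uptau_{\T_n^d}]/n$, and (ii) exponential concentration of $\uptau_{\T_n^d}$ around its mean.

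For (i), I would exploit the natural decomposition of $\T_{n+m}^d$ into its top $n$ levels (isomorphic to $\T_n^d$) with $d^n$ disjoint copies of $\T_m^d$ hanging from the depth-$n$ vertices. In the graphical representation, erasing the arrows between depths $n$ and $n+1$ decouples the process into one copy on $\T_n^d$ and $d^n$ independent copies on $\T_m^d$, and this decoupled process lies stochastically below the original. By itself this only yields $\uptau_{\T_{n+m}^d}\ge \max(\uptau_{\T_n^d},\,\max_i\uptau_{\T_m^d}^{(i)})$, which is too weak. To obtain the additive bound $\uptau_{\T_{n+m}^d}\ge\uptau_{\T_n^d}+\uptau_{\T_m^d}-o(n+m)$, I would apply the strong Markov property at the random time $\sigma$ at which the infection first vacates the top $n$ levels (so $\sigma$ stochastically dominates $\uptau_{\T_n^d}$) and verify that with high probability at least one of the $d^n$ bottom subtrees still carries at time $\sigma$ a configuration whose residual extinction time dominates $\uptau_{\T_m^d}$. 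Fekete's lemma applied to the resulting near-superadditive sequence then gives $\E[\uptau_{\T_n^d}]/n \to c$.

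For (ii), the upper-tail estimate $\P[\uptau_{\T_n^d}>(c+\varepsilon)n]\to 0$ can be obtained from a union bound: by self-duality, $\P[\uptau_{\T_n^d} > t] \le \sum_{x \in \T_n^d} \P[\uptau_{\{x\}}^{\T_n^d} > t]$, so it suffices to prove $\P[\uptau_{\{x\}}^{\T_n^d} > t] \le e^{-\gamma t}$ at a rate $\gamma > \log d$ for $t$ above a threshold proportional to $n$. The hypothesis $\lambda < \lambda_2$ is used here: for $\lambda<\lambda_1$ this follows from subcriticality on the infinite tree, while for $\lambda_1<\lambda<\lambda_2$ it reflects the fact that the infection ``escapes'' any fixed vertex in linear time and, lacking local survival, cannot return persistently (on the finite tree the escape is then trapped by the leaves). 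The lower tail $\P[\uptau_{\T_n^d}<(c-\varepsilon)n]\to 0$ should follow from a persistence argument: within $\T_n^d$ one can find many approximately independent subtrees each of which persists for time $(c-\varepsilon/2) n$ with probability bounded away from zero, so their maximum lifetime is $\ge (c-\varepsilon/2)n$ with probability tending to one.

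The main obstacle is the restart step in~(i): at the random time $\sigma$, the joint configuration on the $d^n$ subtrees $\T_m^d(v)$ is not under direct control, and since $\lambda<\lambda_2$ implies that the infection density at any fixed vertex on the infinite tree decays to zero, one cannot simply assume these subtrees remain near-fully occupied. Overcoming this likely exploits the sheer number $d^n$ of bottom subtrees together with self-duality and the FKG inequality, to show that with overwhelming probability enough of them retain a ``non-negligible'' configuration at time $\sigma$ for the subsequent extinction to dominate (a copy of) $\uptau_{\T_m^d}$. Closely related, the lower-tail concentration in~(ii) rests on a uniform lower bound for the persistence probability of a single subtree, which must be established without appealing to local survival on $\T^d$.
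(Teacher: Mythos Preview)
Your plan---superadditivity of $\E[\uptau_{\T_n}]$ via a restart argument, then concentration around the mean---is natural, but both steps have genuine gaps that the tools you name do not close, and the paper in fact takes a completely different route.

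For~(i), the restart obstacle you flag is real. At the stopping time $\sigma\approx cn$, the $d^n$ bottom subtrees of height $m$ have themselves already been running for time $\sigma$; when $n\gtrsim m$ this exceeds their own natural extinction time, so absent replenishment from above they should be essentially dead. The replenishment they receive from the top $n$ levels is exactly the quantity you would need to quantify, and in the regime $\lambda<\lambda_2$ (no local survival) there is no soft mechanism---FKG, duality, or the sheer number $d^n$---that controls this. For~(ii), your upper-tail argument only recovers Stacey's bound: the union bound $\P[\uptau_{\T_n}>t]\le\sum_x\P[\uptau_{\{x\}}>t]$ with a single-site rate $\gamma>\log d$ yields $\uptau_{\T_n}\le(\log d/\gamma)\,n$ with high probability, i.e.\ some linear upper bound, but nothing in the sketch ties $\gamma$ to the constant $c=\lim\E[\uptau_{\T_n}]/n$, so you cannot conclude $\uptau_{\T_n}\le(c+\varepsilon)n$.

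The paper avoids both difficulties by never passing through the expectation. It sets
\[
b^*=\sup\Bigl\{b:\ \limsup_{n}\P[\uptau_{\T_n}>bn]>0\Bigr\},
\]
so that Theorem~\ref{thm:stsub} gives $b^*\in(0,\infty)$ and the upper tail $\P[\uptau_{\T_n}>(b^*+\varepsilon)n]\to 0$ is immediate from the definition. All the work is in the lower tail, and here the argument is a \emph{chaining} rather than a restart: fix one large $N$ with $\P[\uptau_{\T_N}>bN]>\delta$ for $b=b^*-\varepsilon/2$; a pigeonhole over pairs of levels (plus duality to orient them) extracts from this a one-step estimate $\P[\xi^x_{bN}\cap\L_N(j)\ne\varnothing]>(N^3 d^i)^{-1}$ for some $i\le j\le N$ and every $x\in\L_N(i)$. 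Inside $\T_n$ one then concatenates $k\approx n/N$ such steps, each taking place in a fresh depth-$N$ block, to build an infection path of duration $bkN\ge(b^*-\varepsilon)n$. Each such chain lies in a subtree $\T_n(x)$ rooted at a suitably chosen level $n_1$, the $d^{n_1}$ chains are independent, and the choice of $n_1$ makes $d^{n_1}(N^3 d^i)^{-k}\to\infty$, so at least one chain succeeds with high probability. No stopping time and no control of an intermediate configuration are ever needed.
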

In fact, we show that for any value of $\lambda$, $\uptau_{\T^d_n}/n$ converges in probability to some $c \in [0, \infty]$. Our proof of this fact is self-contained and quite short. Together with Theorem \ref{thm:stsub} (whose proof is very short), it implies Theorem \ref{thm:mainL}.

It should be noted that Theorem \ref{thm:mainL} does not allow us to distinguish between the two regimes delimited by the critical value $\lambda_1$. 

Stacey also studied the case $\lambda > \lambda_2$. Relying on earlier results by Salzano and Schonmann \cite{ss98}, he proved:
\begin{theorem} \cite{St}
\label{thm:stsup} If $\lambda > \lambda_2$, then for any $\beta < 1$, ${\displaystyle \lim_{n \to \infty}\P\Ll[\uptau_{\T^d_n} > e^{|\T^d_n|^\beta}\Rr]}=1.$
\end{theorem}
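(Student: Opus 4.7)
The strategy is to reduce Theorem \ref{thm:stsup} to local survival on the infinite tree $\T^d$, which holds since $\lambda > \lambda_2$. The key quantitative statement I would extract from the work of Salzano and Schonmann \cite{ss98} is the following: there exist constants $c = c(\lambda, d) > 0$, $\rho = \rho(\lambda, d) > 0$, and $m_0 = m_0(\lambda, d)$ such that for every $m \geq m_0$, the contact process on $\T^d_m$ viewed as a standalone graph and started from all 1's satisfies
\[
\P\bigl[\uptau_{\T^d_m} \geq e^{c|\T^d_m|}\bigr] \geq \rho. \qquad (\ast)
\]
Informally: on a large enough finite sub-tree, the extinction time is exponential in the volume with uniformly positive probability. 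I would derive $(\ast)$ from Salzano--Schonmann's local-survival estimates (positive infected density at all times, uniformly in $t$, starting from a single seed on $\T^d$) together with a regeneration argument: the density bound yields that, with positive probability, the configuration on $\T^d_m$ returns after a bounded time to a state dominating a nontrivial fraction of vertices infected; iterating this regeneration through a law of large numbers converts it into the exponential-in-volume bound.

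Given $(\ast)$, the theorem follows from a block-independence argument on $\T^d_n$. Fix $\beta < 1$, pick $\alpha \in (\beta, 1)$, and set $m = \lceil \alpha n \rceil$, so that eventually $m \geq m_0$ and $d^{n-m} \to \infty$. The $d^{n-m}$ vertices of $\T^d_n$ at depth $n-m$ are roots of pairwise vertex-disjoint sub-trees $S_1, \ldots, S_{d^{n-m}}$ of $\T^d_n$, each isomorphic to $\T^d_m$. By the graphical representation, the contact process on $\T^d_n$ started from all 1's dominates the contact process run in isolation on each $S_i$ (the isolated process only uses the Poisson clocks of the vertices and internal edges of $S_i$), and the isolated processes on distinct $S_i$'s are mutually independent, since they are driven by disjoint collections of Poisson clocks. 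Hence
\[
\uptau_{\T^d_n} \;\geq\; \max_{1 \leq i \leq d^{n-m}} \uptau_{S_i},
\]
with the $\uptau_{S_i}$ i.i.d.\ copies of $\uptau_{\T^d_m}$. Applying $(\ast)$ to each factor,
\[
\P\bigl[\uptau_{\T^d_n} < e^{c|\T^d_m|}\bigr] \;\leq\; (1-\rho)^{d^{n-m}} \;\xrightarrow[n \to \infty]{}\; 0.
\]
Finally, $|\T^d_m| \asymp d^{\alpha n} \asymp |\T^d_n|^{\alpha}$, and since $\alpha > \beta$, we have $c|\T^d_m| \geq |\T^d_n|^\beta$ for all $n$ large, completing the argument.

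The main obstacle is the key statement $(\ast)$: translating ``positive probability of local survival on the infinite tree'' into ``exponentially long survival on a finite sub-tree'' requires real work, and it is precisely where the Salzano--Schonmann analysis does the heavy lifting. The delicate point is controlling losses near the boundary of $\T^d_m$: in contrast to the infinite tree, infections propagating outward can only disappear, so the regeneration must be robust enough that the positive infected density persists through a full regeneration cycle despite boundary erosion. Once $(\ast)$ is in hand, the remainder is a short and essentially combinatorial independence computation.
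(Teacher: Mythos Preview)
The paper does not give its own proof of this theorem; it is quoted from Stacey~\cite{St}, and the paper instead proves the strictly stronger Theorem~\ref{thm:mainsup}. Stacey's actual argument, as the paper notes, is a recursive bootstrap (similar in spirit to the paper's four-level scheme in Section~\ref{sec:super}), not a block-independence argument.

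Your proposal has a genuine gap, and it is precisely the statement you label $(\ast)$. That statement --- survival for time $e^{c|\T^d_m|}$ with uniformly positive probability --- is not an input available from Salzano--Schonmann; it is essentially the \emph{output} of the whole theory. Compare with what \cite{ss98} actually delivers, as recorded in Lemma~\ref{lem:ss1} of the paper: starting from the root of $\T^d_m$, the root is reinfected at a grid of times $0,S,2S,\ldots$ up to time of order $m$, each with probability bounded below by $\sigma$. This is survival for time \emph{linear in the height}, i.e.\ logarithmic in the volume, with positive probability. Your $(\ast)$ asks for survival for time exponential in the volume, i.e.\ doubly exponential in the height. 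Bridging that gap is exactly what the entire four-level recursion of Section~\ref{sec:super} is for; your one-sentence ``regeneration through a law of large numbers'' sketch does not do it, and in particular the claimed ``positive infected density at all times'' is not uniform in time on a finite tree --- it degrades, and controlling how slowly it degrades is the whole difficulty.

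Put differently: if $(\ast)$ were in hand, your block argument would be fine, but you would already have something at least as strong as Stacey's theorem (indeed, $(\ast)$ is morally equivalent to Theorem~\ref{thm:mainsup}(a) together with part~(b)), so the reduction is circular. Stacey reaches only the stretched-exponential conclusion precisely because his recursion, starting from the linear-in-height Salzano--Schonmann input, loses a little at each level; getting all the way to $(\ast)$ requires the sharper coupling estimate (Corollary~\ref{prop:coupling}) that is the paper's main technical contribution.
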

Here and below, $|\T^d_n|$ denotes the number of vertices, or volume, of $\T^d_n$. This means that the extinction time grows at least as fast as any stretched exponential function of the volume. Stacey conjectured that this could be improved to exponential growth, a conjecture which was partially confirmed by the following recent result.
\begin{theorem} \cite{mmvy} For each $\lambda > \lambda_1(\Z)$ and $k > 0$, there exists $c > 0$ such that the following holds. Assume $T_n$ is a sequence of trees with degree bounded by $k$ and $|T_n| \to \infty$. Let $\uptau_{T_n}$ denote extinction time for the contact process with parameter $\lambda$ on $T_n$, started from all vertices infected. Then, ${\displaystyle \lim_{n \to \infty}\P\Ll[\uptau_{T_n} > e^{c|T_n|} \Rr] = 1}.$
\end{theorem}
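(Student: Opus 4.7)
The plan is a block renormalisation argument that reduces the problem to the exponential survival of a supercritical oriented percolation on a ``block tree'' extracted from $T_n$. The one-dimensional input is the classical exponential-survival estimate: since $\lambda>\lambda_1(\Z)$, there exist constants $c_1,c_2>0$ such that, for every integer $L\geq 1$, the contact process on the segment $\{1,\ldots,L\}$ started from full occupancy satisfies $\P[\uptau_{\{1,\ldots,L\}}>e^{c_1 L}]\geq 1-e^{-c_2 L}$.

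Fix a large constant $L=L(\lambda,k)$ and partition $T_n$ greedily into vertex-disjoint connected subtrees $B_1,\ldots,B_M$ with $|B_i|\in[L,kL]$; such a partition always exists (walk up from the leaves and cut off a subtree once its size first exceeds $L$) and produces $M=\Theta(|T_n|/L)$ blocks. Contract each block to a single vertex to obtain a ``block tree'' on $M$ vertices whose maximum degree depends only on $k$. For every pair of adjacent blocks $(B_i,B_j)$, the union $B_i\cup B_j$ is a connected bounded-degree tree on at least $L$ vertices, and hence contains a path of length at least $\lfloor\log_k L\rfloor$ crossing the $B_i$--$B_j$ boundary edge. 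Attractiveness of the contact process implies that its restriction to this path inside $T_n$ stochastically dominates the isolated one-dimensional contact process on the path; combined with the 1D estimate, this produces a fixed block-time $T=T(\lambda,k,L)$ and a good event $G_{ij}$, measurable with respect to the graphical construction on a bounded neighbourhood of $B_i\cup B_j$ during $[0,T]$, satisfying $\P[G_{ij}]\geq 1-\epsilon(L)$ with $\epsilon(L)\to 0$ as $L\to\infty$, and such that on $G_{ij}$ a positive density of infected vertices on $B_i$ at time $0$ guarantees a positive density on $B_j$ at time $T$.

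Iterating across successive time windows $[nT,(n+1)T]$ couples the contact process from below with a discrete-time oriented percolation on (block tree)$\times\N$ with retention probability $p=1-\epsilon(L)$, which for $L$ large is supercritical on any graph of uniformly bounded degree. A standard large-deviation estimate for supercritical oriented percolation on a connected bounded-degree graph with $M$ vertices, started from every vertex occupied, yields survival for at least $e^{c_3 M}$ generations with probability at least $1-e^{-c_4 M}$. Translating back to continuous time gives $\uptau_{T_n}\geq T\cdot e^{c_3 M}\geq e^{c|T_n|}$ with probability tending to $1$, which is the claimed bound.

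I expect the hard step to be the uniform construction of the block event $G_{ij}$: bounded-degree trees on $O(L)$ vertices can look wildly different (a long thin path, a balanced bushy subtree, or a mix), and one needs $\epsilon(L)\to 0$ simultaneously in all these shapes. Since the 1D estimate only requires a single embedded path of length tending to infinity and attractiveness ensures that surrounding edges can only help the infection, this uniformity should be available in principle; however, one still has to localise $G_{ij}$ spatially and temporally so that events attached to disjoint pairs of adjacent blocks are sufficiently independent for the oriented percolation comparison to be rigorous, and controlling the error introduced by this truncation inside the same $\epsilon(L)$ is the real technical heart of the argument.
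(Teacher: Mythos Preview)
This theorem is not proved in the present paper: it is quoted from \cite{mmvy} as a known result, and no argument for it appears here. There is therefore no proof in the paper against which to compare your proposal. The only remark the paper makes about the method of \cite{mmvy} is that it proceeds by coupling the contact process with independent copies of a \emph{Phoenix contact process} (a contact process that is allowed to resurrect after extinction), which is not the block/oriented-percolation scheme you sketch.

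On your sketch itself: the overall shape---decompose the tree into blocks of bounded size, build a local ``good event'' that with high probability carries infection from one block to a neighbour in one time step, and then dominate from below by supercritical oriented percolation on the block graph---is indeed the standard template for such results and is morally in the spirit of \cite{mmvy}. You correctly flag the real difficulty, namely a shape-uniform construction of $G_{ij}$ with $\epsilon(L)\to 0$ and enough spatial/temporal locality to invoke a Liggett--Schonmann--Stacey type product domination. One place where your outline would need repair: the claim that $B_i\cup B_j$ contains a path of length $\lfloor\log_k L\rfloor$ \emph{crossing the $B_i$--$B_j$ boundary edge} is not automatic (the longest path in the union may lie entirely inside one block), and a path of only logarithmic length is in any case a weak conduit for transporting ``positive density'' across a block of size $L$; the construction in \cite{mmvy} handles the transport of infection between blocks by a different and more delicate mechanism.
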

The reason this settled Stacey's conjecture only partially is of course the hypothesis that $\lambda > \lambda_1(\Z)$ (the critical value for the contact process on $\Z$), since we expect that $\lambda_2(\T^d) < \lambda_1(\Z)$ for every $d \ge 2$. For $d \ge 3$, this inequality is a consequence of the facts that $\lambda_1(\Z) \ge 1.539$ and that $\lambda_2(\T^d) \le (\sqrt{d}-1)^{-1}$ (see \cite[p.\ 289]{lig85} and \cite[Theorem~4.65]{lig99}). Here, we prove the following stronger version of the conjecture.
\begin{theorem}\label{thm:mainsup}
If $\lambda > \lambda_2 \ ( = \lambda_2(\T^d))$, then 
\begin{itemize}
\item[(a)] there exists $c \in (0,\infty)$ such that ${\displaystyle \lim_{n \to \infty}\frac{\log\E[\uptau_{\T_n^d}]}{|\T^d_n|}} = c$. 
\item[(b)] as $n \to \infty$, ${\uptau_{\T_n^d}}/{\E[\uptau_{\T_n^d}]}$ converges in distribution to the exponential distribution with parameter 1, that is, for any $\upalpha > 0$,
$$\lim_{n\to\infty} \P\Ll[\uptau_{\T_n^d}/\E[\uptau_{\T_n^d}] > \upalpha\Rr] = e^{-\upalpha}.$$
\end{itemize}
\end{theorem}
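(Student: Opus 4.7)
The plan is to treat (a) and (b) as largely independent, with one interlocking step.

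For part (a), the upper bound $\log\E[\uptau_{\T_n^d}]\le C|\T_n^d|$ is standard on any finite graph of $N$ vertices (dominate the infected-count by a birth--death chain on $\{0,\ldots,N\}$). The substantive work is the matching lower bound, which for $\lambda\in(\lambda_2(\T^d),\lambda_1(\Z))$ is not covered by \cite{mmvy} and must genuinely use the tree structure. My plan is a renormalization: for $k$ large, local survival on $\T^d$ (valid since $\lambda>\lambda_2$) implies that with probability $p>0$ a single infection at the root of a height-$k$ subtree produces, at some bounded stopping time, infections at vertices that are themselves roots of at least two disjoint height-$k$ sub-subtrees. This embeds a strictly supercritical spatial branching process into the contact process; started from $\Theta(|\T_n^d|/d^k)$ seeds at generation $k$ of $\T_n^d$, it survives a time exponential in $|\T_n^d|$ with positive probability, and a short restart argument transfers this to an exponential lower bound on $\E[\uptau_{\T_n^d}]$. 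This is the step I expect to be the hardest: when $\lambda$ is only marginally above $\lambda_2(\T^d)$, the renormalized branching is only marginally supercritical, so the block height $k$ and the ``successful spreading'' event must be chosen with care.

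To upgrade two-sided linear bounds into the existence of the limit in (a), I would use the recursive structure $\T_n^d = o \sqcup (\T_{n-1}^d)^d$: the $d$ independent processes on the pendant copies of $\T_{n-1}^d$ are dominated by the coupled process on $\T_n^d$. Combined with the (near-)exponentiality of $\uptau_{\T_{n-1}^d}$ from part (b), this yields a recursion of the form $\log\E[\uptau_{\T_n^d}]\ge d\cdot\log\E[\uptau_{\T_{n-1}^d}] - O(\log n)$; dividing by $|\T_n^d|$ gives a near-monotone sequence, and a Fekete-type limit identifies $c:=\lim\log\E[\uptau_{\T_n^d}]/|\T_n^d|$, positive by the block construction. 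The apparent circularity with (b) is handled by running (b) with only crude estimates on $\E[\uptau_{\T_n^d}]$ coming from the block construction itself.

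Part (b) follows the classical ``loss of memory $\Rightarrow$ exponential limit'' paradigm. The key input is: for a rich class of ``good'' initial configurations $\xi$ (containing sufficiently much infection in the right region), $\uptau_{\T_n^d}^\xi/\E[\uptau_{\T_n^d}]$ has the same weak limit as $\uptau_{\T_n^d}/\E[\uptau_{\T_n^d}]$, and the process started from $\mathbf{1}$ reaches such a good configuration within time $o(\E[\uptau_{\T_n^d}])$ conditional on non-extinction. The loss-of-memory step is proved by coupling to the all-infected process, exploiting that $\T_n^d$ is locally like $\T^d$ and that for $\lambda>\lambda_2$ the contact process on $\T^d$ has a nontrivial upper invariant measure. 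Applying the Markov property at time $s\,\E[\uptau_{\T_n^d}]$ then yields $\P[\uptau>(s+t)\E[\uptau]]\approx \P[\uptau>s\E[\uptau]]\cdot\P[\uptau>t\E[\uptau]]$, which forces the limit to be exponential with mean $1$.
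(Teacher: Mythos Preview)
Your overall architecture---a recursion $\log \E[\uptau_{\T_n}] \ge d\log \E[\uptau_{\T_{n-1}}] - (\text{error})$ for part~(a), and a loss-of-memory/coupling argument for part~(b)---is exactly the paper's. But there is a genuine gap in how you propose to obtain the recursion.

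The inequality $\uptau_{\T_n} \ge \max_{1\le i\le d}\uptau^{(i)}$, with $\uptau^{(i)}$ the i.i.d.\ extinction times of the processes restricted to the $d$ pendant copies of $\T_{n-1}$, together with approximate exponentiality of each $\uptau^{(i)}$, does \emph{not} give $\log \E[\uptau_{\T_n}] \ge d\log \E[\uptau_{\T_{n-1}}] - O(\log n)$. The max of $d$ i.i.d.\ mean-$M$ exponentials has mean $M\sum_{k=1}^d 1/k \asymp M\log d$, not $M^d$; this yields only $\log M_n \ge \log M_{n-1} + O(1)$, which is far too weak. To get the $d$-th power you must argue that when one subtree dies it is \emph{quickly re-infected to near-full occupancy} from the others, so that extinction of $\T_n$ requires all $d$ subtrees to die ``simultaneously'' in a short window. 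That restart step is not a consequence of exponentiality; it needs a quantitative coupling statement on $\T_n$ itself: any surviving process couples to the full-occupancy process in time $T_n = o(\E[\uptau_{\T_{n-1}}])$, with extremely small failure probability. In the paper this is Corollary~4.6, proved via a three-level bootstrap (Sections~4.1--4.3) starting from the Salzano--Schonmann estimates; the coupling time is $T_n=\exp(d^{\lfloor n^{1/6}\rfloor})$, so the recursion carries an error of $d\cdot d^{\lfloor n^{1/6}\rfloor}$, not $O(\log n)$. This is still summable after dividing by $d^n$, which is what makes the Fekete-type argument work.

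Your remark that coupling follows from ``$\T_n^d$ is locally like $\T^d$ and $\lambda>\lambda_2$ gives a nontrivial upper invariant measure'' is not enough: you need not qualitative convergence but a coupling time that is sub-exponential in the volume, with failure probability beating the number of time-windows (and of vertices) you union-bound over. Establishing this is the main work of the paper, and once it is in hand, the recursion for~(a) and the loss-of-memory for~(b) both follow directly---so there is no actual circularity between (a) and~(b). Your separate ``branching embedding'' for the exponential lower bound is in the spirit of Stacey's argument, which only yields stretched-exponential survival; promoting it to true exponential again requires the restart/coupling mechanism.
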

Although our proof of the above theorem could be made shorter by relying on some points in Stacey's proof of Theorem \ref{thm:stsup}, we have chosen to give a more self-contained proof that only relies on the estimates of \cite{ss98}.

Concerning the process started from different initial configurations, we show
\begin{theorem}
\label{thm:mainsupp}If $\lambda > \lambda_2$, then there exists $\delta > 0$ such that, for any $\upalpha > 0$ and any $n$ large enough (depending on $\upalpha$), the contact process on $\T_n^d$ satisfies
$$\inf_{A \subset \T_n^d,\; A \neq \varnothing}\;\P\Ll[\uptau_A/\E[\uptau_{\T_n^d}]  > \upalpha \Rr] > \delta e^{-\upalpha}.$$
\end{theorem}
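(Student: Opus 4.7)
The plan is to reduce Theorem~\ref{thm:mainsupp} to Theorem~\ref{thm:mainsup}(b) by combining two classical tools: monotonicity in the initial configuration, and the self-duality of the contact process.

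By the basic coupling, whenever $x \in A$ we have $\xi^{\{x\}}_t \subseteq \xi^A_t$, so $\P[\uptau_A > s] \ge \P[\uptau_{\{x\}} > s]$. It therefore suffices to prove the bound for $A = \{x\}$, uniformly in $x \in \T_n^d$. Self-duality of the contact process then yields
$$\P[\uptau_{\{x\}} > t] = \P[\xi^{\T_n^d}_t(x) = 1],$$
so, combining with Theorem~\ref{thm:mainsup}(b), the statement reduces to the following claim: there exists $\delta > 0$ such that, for every $x \in \T_n^d$, every $\upalpha > 0$, and all $n$ large enough (depending on $\upalpha$),
$$\P[\xi^{\T_n^d}_{\upalpha M_n}(x) = 1 \mid \uptau_{\T_n^d} > \upalpha M_n] \ge \delta,$$
where $M_n = \E[\uptau_{\T_n^d}]$. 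In words, in the quasi-stationary regime every single vertex is infected with probability bounded below by a universal constant.

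To establish this density bound, I would apply the Markov property at time $\upalpha M_n - T$, with $T$ a fixed constant to be chosen large in terms of $\lambda$ and $d$. On $\{\uptau_{\T_n^d} > \upalpha M_n - T\}$ the configuration $\xi^{\T_n^d}_{\upalpha M_n - T}$ is nonempty; and if it contains any vertex at graph distance at most $T$ from $x$, then the probability that $x$ is infected at time $\upalpha M_n$ is bounded below by a positive constant depending only on $T, \lambda$ (via a single chain of infection arrows along the connecting path). The key remaining step is the spatial one: show that, conditionally on survival, $\xi^{\T_n^d}_{\upalpha M_n - T}$ contains at least one vertex in the $T$-ball around $x$ with probability bounded below uniformly in $x$. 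This is a metastable density bound on the surviving configuration, which I would extract from the estimates in \cite{ss98} already used for Theorem~\ref{thm:mainsup}.

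The main obstacle is this uniform local density bound, particularly for vertices $x$ near the boundary of $\T_n^d$ (a leaf), where the local tree structure differs sharply from the infinite tree $\T^d$. Since the automorphism group of $\T_n^d$ acts transitively on each level, the quantity $\P[\xi^{\T_n^d}_t(x) = 1 \mid \uptau_{\T_n^d} > t]$ depends only on the distance from $x$ to the root, so the argument should proceed level by level: for $x$ close to the root, the bound reduces essentially to density estimates on $\T^d$ at supercriticality; for $x$ near a leaf, one exploits the fact that a leaf is re-infected from its interior parent (whose density has already been controlled at the previous level) with uniformly positive rate.
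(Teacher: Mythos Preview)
Your duality reduction is correct but circular. You correctly observe that
\[
\P[\uptau_{\{x\}} > t] \;=\; \P[\xi^{\underline{1}}_t(x) = 1],
\]
and hence that the theorem is equivalent to the conditional density bound
\[
\P[\xi^{\underline{1}}_{\upalpha M_n}(x) = 1 \mid \uptau_{\T_n^d} > \upalpha M_n] \;\ge\; \delta.
\]
But apply duality once more to the numerator: $\P[\xi^{\underline{1}}_{t}(x)=1]=\P[\uptau_{\{x\}}>t]$, so this conditional density is exactly $\P[\uptau_{\{x\}}>t]/\P[\uptau_{\T_n^d}>t]$. You have merely rephrased the statement, not reduced it to something easier.

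The genuine gap is in your proposed proof of the density bound. The look-back argument with a \emph{fixed} $T$ gives
\[
\P[\xi^{\underline{1}}_t(x)=1]\;\ge\; c(T)\cdot \P[\xi^{\underline{1}}_{t-T}\cap B(x,T)\neq\varnothing],
\]
and the right-hand probability is, again by duality, $\P[\uptau_{B(x,T)}>t-T]$. So one iteration trades the vertex $x$ for the ball $B(x,T)$ at the cost of a factor $c(T)<1$; iterating until the ball contains the root costs $c(T)^{n/T}$, which is not uniform in $n$. Your ``level-by-level'' induction has exactly this defect: with $n$ levels, each re-infection step loses a constant factor, and you do not explain how to avoid $\delta_n\to 0$. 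The estimates you cite from \cite{ss98} control the process only on time scales of order $n$, not on the exponentially large metastable time $\upalpha M_n$, so they cannot directly supply the quasi-stationary density you need.

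The paper takes a different route. It first establishes a coupling result (Corollary~\ref{prop:coupling}): with overwhelming probability, by time $e^{h_6(n)}$ every single-vertex process $(\xi^{x}_t)$ has either died out or coincides exactly with $(\xi^{\underline{1}}_t)$. Separately, Proposition~\ref{prop:auxEnd} shows that $\P[\uptau_A > e^{h_5(n)}]>\delta$ uniformly over nonempty $A$, by following the infection up a nested chain of subtrees $\T_n(y_0)\subset\T_n(y_1)\subset\cdots$ rooted along the path from $A$ to $o$ (so the number of steps is at most $n$, but the estimates used at each step are sharp enough that the total loss is summable). Combining these two facts with Theorem~\ref{thm:mainsup}(b) yields the result. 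The essential ingredient you are missing is precisely this coupling: once you know that a surviving process \emph{equals} $\xi^{\underline 1}$ after a sub-metastable time, the uniformity in $A$ comes for free.
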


\subsection*{Organization of the paper} The rest of the paper is organized as follows. The next section recalls the classical graphical construction and duality properties of the contact process, and fixes the notation used throughout. Section~\ref{sec:lowreg} is devoted to the proof of Theorem~\ref{thm:mainL}. The proofs of Theorems~\ref{thm:mainsup} and \ref{thm:mainsupp} are contained in Section~\ref{sec:super}. An Appendix collects some useful estimates on random walks.

The proof of Theorem~\ref{thm:mainsup} is inspired by the method developed in Section 4 of \cite{mmvy}. In that paper, we couple the contact process with independent copies of a process called the \emph{Phoenix contact process}. This is simply a process that behaves as a normal contact process until extinction, but then has the ability to recover activity after some lag. Although this method remains a useful guide for our intuition, we propose here an important simplification for its implementation, that ultimately bypasses the introduction of Phoenix contact processes and is much shorter than the proof in \cite{mmvy}.

\section{Graphical construction and duality}
Let us briefly describe the graphical construction of the contact process. Fix $\lambda > 0$ and the graph $G = (V, E)$. For each $x \in V$, let $D^x$ be a Poisson point process with parameter 1 on $[0,\infty)$ and, for each ordered pair $(x,y)$ such that $\{x,y\} \in E$, let $D^{(x,y)}$ be a Poisson point process with parameter $\lambda$ on $[0, \infty)$; these processes are taken to be independent. As a collection they are denoted by $H$ and called the graphical construction or Harris system. Points in the processes $D^x$ are called recovery marks, and points in the processes $D^{(x,y)}$ are called transmission arrows, or simply transmissions. Given a realization of $H$, $x, y \in V$ and $0 \le s \le t$, we say that $(x,s)$ is connected to $(y, t)$ by an infection path in $H$, and write $(x,s) \;\lra\; (y,t)$, if there exists a function $\gamma: [s, t] \to V$ that is right-continuous and satisfies:
$$\begin{array}{ll}\gamma(s) = x,\; \gamma(t) = y\; \text{ and, for all } r \in [s,t], &\bullet\; r \notin D^{\gamma(r)};\\&\bullet\;\gamma(r-) \neq \gamma(r) \text{ implies } r \in D^{(\gamma(r-), \gamma(r))}. \end{array}$$
Such a function is called an infection path. In words, an infection path is a path in $V$ that does not touch recovery marks and only jumps by traversing arrows.

Let $A,B,C \subset V$. We write $(x,s) \;\lra \; B \times \{t\}$ if $(x,s) \; \lra \; (y,t)$ for some $y \in B$, and similarly we write $A \times  \{s\} \; \lra \; (y,t)$ and $A \times \{s\} \;\lra\; B \times \{t\}$. We write $(x,s) \; \lra \; (y,t)$ \textit{inside $C$} if the infection path satisfies the additional requirement that $\gamma(r) \in C$ for all $r \in [s, t]$. Similarly we write $(x,s) \;\lra\; B \times \{t\}$ inside $C$, $A \times  \{s\} \; \lra \; (y,t)$ inside $C$ and $A \times \{s\} \;\lra\; B \times \{t\}$ inside $C$.

We set, for any $x \in V$ and $t \ge 0$,
\begin{equation}\xi^x_t = \{y \in V: (x,0)\;\lra\; (y,t)\}.\label{eq:graphical}\end{equation} Then, $(\xi^x_t)_{t \ge 0}$ is a version of the contact process, that is, it has the same distribution as that of the process whose generator we have given in the beginning of the Introduction, and initial configuration $I_{\{x\}}$. By setting $\xi^A_t = \cup_{x \in A}\; \xi^x_t$, we get a version of the contact process with initial configuration $I_A$.

Given $x \in V,\;t > 0$ and a realization of the graphical construction $H$, define the dual process $(\hat \xi^{(x,t)}_s)_{0 \le s \le t}$ by
$$\hat \xi^{(x,t)}_s = \{y: (y, t-s) \;\lra\; (x,t)\},$$
and for $A \subset V$, define $\hat \xi^{(A, t)}_s = \cup_{x \in A}\; \hat \xi^{(x,t)}_s$. Given $A, B \subset V$, we then have
 $$\{ \xi^A_t \cap B \neq \varnothing\} =\{\hat \xi^{B,t}_t \cap A \neq \varnothing\},$$
since both events are equal to $\{(x,0) \; \lra \; (y,t) \text{ for some } x\in A,\; y \in B\}$. This is called the duality relation for the contact process. By the time reversibility of the Poisson process, it is easy to see that for any $A$, the law of $(\hat \xi^{(A,t)}_s)_{0 \leq s \leq t}$ is the same as that of $(\xi^A_s)_{0 \le s \le t}$, that is, $(\hat \xi^{(A,t)}_s)_{0 \leq s \leq t}$ is a contact process started from $I_A$ and ran up to time $t$. Due to this fact, the contact process is said to be self-dual.

\subsection*{Summary of notation}
We denote the cardinality of a set $A$ by $|A|$, and the indicator function of $A$ by $I_A$.

The graph distance is denoted by $\mathsf{dist}$, and $B(x,r) = \{y: \mathsf{dist}(x,y) \le r\}$. We will write $x \sim y$ when $\mathsf{dist}(x,y)=1$. 

A positive integer $d \geq 2$ is fixed throughout. We will thus omit the superscript $d$ of the trees $\T^d$ and $\T^d_n$ defined above and write $\T$ and $\T_n$ instead. 

Recall that we write $o$ for the root of $\T_n$. If $x \sim y \in \T_n$ and $\mathsf{dist}(o,y) = \mathsf{dist}(o,x)+1$, we say that $x$ is the parent of $y$, $y$ is a child of $x$ and we write $x = \mathsf{p}(y) = \mathsf{p}_1(y)$. For $i \geq 1$, if $\mathsf{p}_i(y) \neq o$, we define $\mathsf{p}_{i+1}(y) = \mathsf{p}(\mathsf{p}_i(y))$. If $x = \mathsf{p}_i(y)$ for some $i$, we say that $x$ is an ancestor of $y$ and $y$ is a descendant of $x$.

For $x \in \T_n$, $\T_n(x)$ denotes the subtree of $\T_n$ which includes $x$ and its descendants. We also write $\T_n(x, k) = \T_n(x) \cap B(x, k)$. For $0 \leq m \leq n$, we write $\L_n(m) = \{x \in \T_n: \mathsf{dist}(o,x) = m\}$. For $x \in \T_n$, $0 \le m \le n$, we write $\L_n(x,m) = \{y \in \T_n(x):\mathsf{dist}(x,y) = m\}.$ 

For $a, b\in\mathbb{R}$ with $a<b$, we denote by $D_n[a, b]$ the set of functions $f: [a, b] \to \{0,1\}^{\T_n}$ that are right-continuous with left limits.

We will use the notation explained above for the contact process: $\xi^x_t$ is the process started from $\xi_0 = I_{\{x\}}$ and $\xi^A_t$  is the process started from $I_A$. In order not to make the notation too heavy, we do not include in this notation the graph in which the process is being considered, so this will always be clear from the context. 

\section{Subcritical and intermediate regimes}
\label{sec:lowreg}
In this section we assume that $\lambda < \lambda_2(\T)$.

Let ${\displaystyle b^* = \sup\Ll\{b: \limsup_{n \to \infty} \P[\uptau_{\T_n}> bn] > 0\Rr\}}$. By Theorem \ref{thm:stsub}, $b^* \in (0, \infty)$. 

\begin{lemma}
For any $b < b^*$, there exist infinitely many values of $N \in \mathbb{N}$ such that the contact process $\xi$ on $\T_N$ satisfies the following. There exist $0 \le i \le j \le N$ such that, for any $x \in \L_{N}(i)$, 
$$
\P\Ll[\xi^x_{bN} \cap \mathbb{L}_N(j) \neq \varnothing \Rr] > (N^3 d^i)^{-1}.
$$
\end{lemma}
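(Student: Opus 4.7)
The plan starts from the definition of $b^*$: if $b<b^*$, then since the map $b\mapsto\limsup_n\P[\uptau_{\T_n}>bn]$ is non-increasing, I can pick $b'\in(b,b^*)$ belonging to the defining set of $b^*$ and extract $\delta>0$ together with infinitely many $N$ for which $\P[\uptau_{\T_N}>bN]\ge\P[\uptau_{\T_N}>b'N]>\delta$. I will work with any such $N$, and produce the required $i\le j$ by three stacked union bounds alternated with self-duality.

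Since $\uptau_{\T_N}>bN$ is the event $\xi^{\T_N}_{bN}\neq\varnothing$ and $\T_N=\bigcup_{j=0}^N\L_N(j)$, a first union bound together with the self-duality identity $\P[\xi^A_t\cap B\neq\varnothing]=\P[\xi^B_t\cap A\neq\varnothing]$ gives
\[
\delta < \sum_{j=0}^N \P\Ll[\xi^{\T_N}_{bN}\cap\L_N(j)\neq\varnothing\Rr] = \sum_{j=0}^N \P\Ll[\xi^{\L_N(j)}_{bN}\neq\varnothing\Rr],
\]
so some $j'$ satisfies $\P[\xi^{\L_N(j')}_{bN}\neq\varnothing]>\delta/(N+1)$. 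Repeating the decomposition on this event produces $i'$ with $\P[\xi^{\L_N(j')}_{bN}\cap\L_N(i')\neq\varnothing]>\delta/(N+1)^2$. I then set $i=\min(i',j')$ and $j=\max(i',j')$; swapping via duality if necessary, the same lower bound holds for $\P[\xi^{\L_N(i)}_{bN}\cap\L_N(j)\neq\varnothing]$, now with $i\le j$ as required.

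To finish, observe that the automorphism group of $\T_N$ acts transitively on every level and preserves each $\L_N(k)$ setwise, and the Harris construction is invariant in distribution under these automorphisms; hence $p:=\P[\xi^x_{bN}\cap\L_N(j)\neq\varnothing]$ takes a common value as $x$ ranges over $\L_N(i)$. A union bound over the starting set then yields
\[
\delta/(N+1)^2 < \P\Ll[\xi^{\L_N(i)}_{bN}\cap\L_N(j)\neq\varnothing\Rr] \le |\L_N(i)|\,p = d^i\,p,
\]
so $p>\delta/((N+1)^2 d^i)>1/(N^3 d^i)$ for all $N$ in our sequence with $N\ge 2/\delta$, which still leaves infinitely many $N$.

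I anticipate no serious obstacle: the proof is just iterated union bounds powered by the two symmetries at hand, self-duality of the graphical construction and level-transitive tree automorphisms, the latter being what upgrades "some $x\in\L_N(i)$" to "every $x\in\L_N(i)$". The only apparent subtlety, the ordering $i\le j$, is handled for free by self-duality, which lets me swap the roles of $\L_N(i')$ and $\L_N(j')$ at no cost in the quantitative bound.
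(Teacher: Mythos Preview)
Your proof is correct and follows essentially the same approach as the paper: extract $\delta>0$ and infinitely many $N$ from the definition of $b^*$, use union bounds over the levels $\L_N(i),\L_N(j)$, invoke self-duality to arrange $i\le j$, and then use the level-transitive symmetry of $\T_N$ together with a final union bound over $x\in\L_N(i)$ to pass from the set $\L_N(i)$ to a single vertex. The only cosmetic difference is that you perform the two level union bounds sequentially (first $j'$, then $i'$) rather than simultaneously over pairs $(i,j)$, and you spell out the automorphism argument and the final arithmetic $\delta/(N+1)^2 > 1/N^3$ more explicitly than the paper does.
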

\begin{proof}
Fix $b < b^*$. Using the definition of $b^*$, we see that there exists $\delta > 0$ such that, for infinitely many values of $N$, $\P\Ll[\uptau_{\T_N} > bN\Rr] > \delta.$ Since 
$$\P\Ll[\uptau_{\T_N} > bN\Rr] \le \sum_{0 \leq i,j \leq N} \P\Ll[\;\xi^{\L_N(i)}_{bN} \cap \L_N(j) \neq \varnothing\;\Rr],$$
there exist $i,j$ such that $\P\Ll[\;\xi^{\L_N(i)}_{bN} \cap \L_N(j) \neq \varnothing\;\Rr] > \delta/N^2$. Since, by duality,
$$\P\Ll[\;\xi^{\L_N(i)}_{bN} \cap \L_n(j) \neq \varnothing\;\Rr] = \P\Ll[\;\xi^{\L_N(j)}_{bN} \cap \L_N(i) \neq \varnothing\;\Rr],$$
we may assume that $i \le j$. We now have
$$\delta/N^2 < \P\Ll[\;\xi^{\L_N(i)}_{bN} \cap \L_N(j) \neq \varnothing\;\Rr]\leq \sum_{x \in \L_N(i)} \P\Ll[\;\xi^x_{bN} \cap \L_N(j) \neq \varnothing\;\Rr]$$
and the probability inside the sum does not depend on $x \in \L_N(i)$. With the observation that $|\L_N(i)| = d^i$ and the added requirement that $N > 1/\delta$, the inequality of the lemma then holds.
\end{proof}

\begin{proofof}\textit{Theorem \ref{thm:mainL}}. Fix $\epsilon > 0$. We will show that, for $n$ large enough, $$\P\Ll[\uptau_{\T_n} > (b^* - \epsilon)n\Rr] > 1 - \epsilon.$$
Together with the definition of $b^*$, this will imply that $\frac{\uptau_{\T_n}}{n}$ converges to $b^*$ in probability.

We let $b = b^* - \epsilon/2$ and choose $N, i, j$ corresponding to $b$ in the above lemma. We assume $N$ is large enough that
\begin{equation}
\label{eq:NlogN}\frac{N}{\frac{9 \log N}{\log d} + N} \;(b^*-\epsilon/2)> b^* - \epsilon.
\end{equation}
Now fix $n$ much larger than $N$ and define 
$$M = \Bigl\lfloor \frac{8 \log N}{\log d} \Bigr\rfloor,\quad k = \Bigl\lfloor \frac{n}{M + N}\Bigr\rfloor-1,\quad n_1 = k(M+i).$$
Note that
\begin{equation}\nonumber
n_1 + (k-1)(j-i) + N \le kM +(k-1)j + i + N \leq kM + (k-1)N + N + N\leq (k+1)(M+ N) \leq n,\end{equation}
so
\begin{equation}\label{eq:helpn_1} \text{if } 0 \leq h \leq n_1 + (k-1)(j-i) \text{ and } y \in \L_n(h), \text{ then } \T_n(y, N) \text{ is a tree of height }N. \end{equation}

For each $x \in \L_n(n_1)$, we will define a nested sequence of events
$$A(x,k) \subset A(x,k-1) \subset \cdots \subset A(x,1)$$
such that
$$\bigcup_{x\in \L_n(n_1)} A(x,k) \subset \{\uptau_{\T_n} > bkN\}$$
and
$$\lim_{n \to \infty} \P \left[\bigcup_{x\in \L_n(n_1)} A(x,k)  \right] = 1.$$

Starting with an $x\in\L_n(n_1)$, let $y_0(x) = x$ and let $z_0(x)$ be an arbitrary vertex in $\L_n(x, i)$. Define the event
$$A(x,1) = \{\;(z_0(x),0) \;\lra \; \L_n(x,j) \times \{bN\} \text{ inside } \T_n(x, N)\;\}.$$
On $A(x,1)$, define $z_1(x)$ as a vertex of $\L_n(x,j)$ such that $(z_0(x), 0) \; \lra \; (z_1(x), bN)$ inside $\T_n(x,N)$. Also let $y_1(x) = \mathsf{p}_i(z_1(x))$; note that $y_1(x) \in \L_n(y_0(x),j-i)$. Then define the event $A(x, 2) \subset A(x,1)$ by
$$A(x,2) = \{\;(z_1(x), bN) \; \lra \; \L_n(y_1(x), j) \times \{2bN\} \text{ inside } \T_n(y_1(x), N)\;\}.$$
On this event, let $z_2(x)$ be a vertex of $\L_n(y_1(x),j)$ such that $(z_1(x), bN) \; \lra \;(z_2(x), 2bN)$ inside $\T_n(y_1(x),N)$. Then let $y_2(x) = \mathsf{p}_i(z_2(x))$. Note that $y_2(x) \in \L_n(y_1(x), j-i)$. Using (\ref{eq:helpn_1}), we then repeat this definition until we have obtained $A(x,k),\;z_k(x)$. Figure 1 may clarify these definitions.

\begin{figure}[htb]
\begin{center}
\setlength\fboxsep{0pt}
\setlength\fboxrule{0.5pt}
\fbox{\includegraphics[width = 1.0\textwidth]{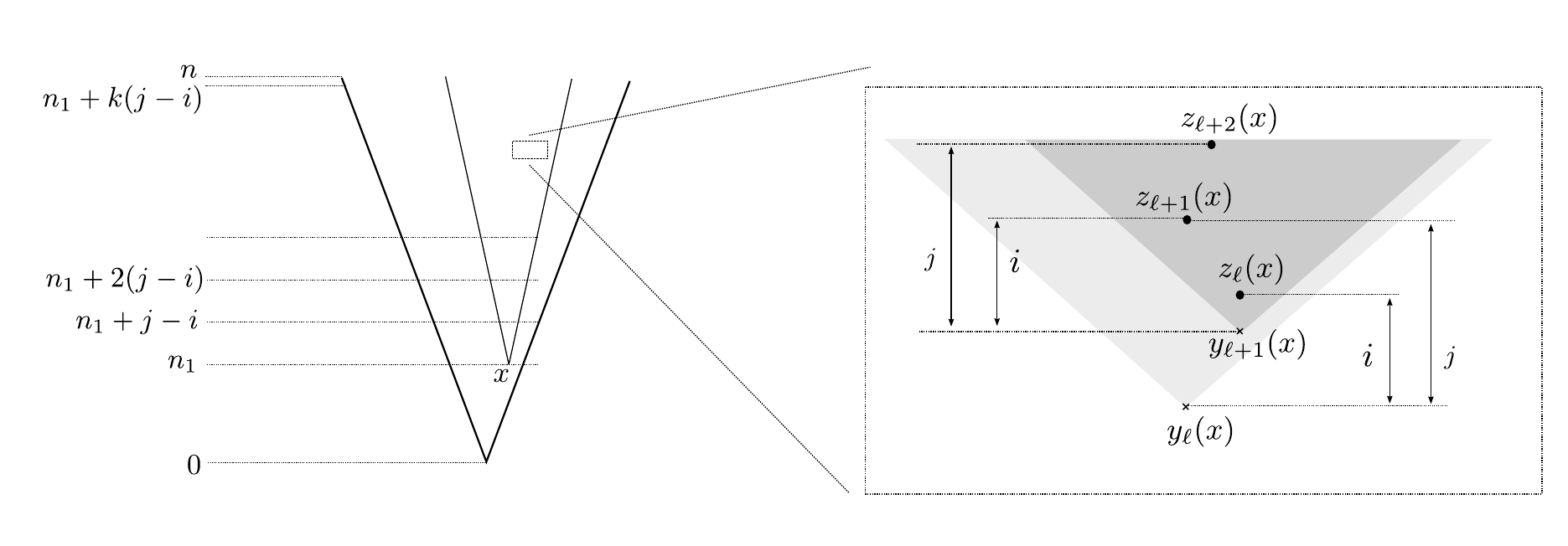}}
\end{center}
\caption{The sequences $y_1(x), \ldots, y_k(x),\; z_1(x),\ldots, z_k(x).$}
\label{figDual}
\end{figure}

We have, for any $x \in \L_n(n_1)$,
$$\P[A(x,1)] \ge (N^3 d^i)^{-1},\qquad \P[A(x, \ell +1)\;|\;A(x,\ell)] \ge (N^3 d^i)^{-1},\; 1 \le \ell < k,$$
so that $\P[ A(x,k) ] \ge (N^3 d^i)^{-k}.$
The sequences of events $(A(x,1),\ldots, A(x,k))$ are independent (in the variable $x$), since $(A(x,1),\ldots, A(x,k))$ only depends on the graphical construction inside $\T_n(x)$, and the sets $\T_n(x)$ for $x \in \L_n(n_1)$ are disjoint. Thus,
\begin{equation}\P\Ll[ \mathop{\bigcup}_{x \in \L_n(n_1)} A(x,k)\Rr] \ge \P\Ll[\;\mathrm{Bin}\Ll(|\L_n(n_1)|,\; (N^3 d^i)^{-k}\Rr) > 0\Rr].\label{eq:lBin}\end{equation}
The expectation of the above Binomial is $$d^{n_1}(N^3\cdot d^i)^{-k} = (N^{-3}\cdot d^{M})^k \ge (N^{-3} \cdot d^{\frac{4\log N}{\log d}})^k = N^k,$$
so, as $n \to \infty$ (and thus $k \to \infty$), the right-hand side of (\ref{eq:lBin}) converges to 1.

If $A(x,k)$ occurs for some $x$, then $(x,0)\; \lra\; (z_k(x),\; bkN)$, so $\uptau_{\T_n} > bkN$. Finally, note that
$$\begin{aligned}bkN &= (b^* - \epsilon/2) \left(\Bigl \lfloor \frac{n}{N + \lfloor 8\log N/\log d \rfloor} \Bigr \rfloor - 1 \right) N \\&\qquad\qquad\qquad\qquad\qquad\qquad\qquad\qquad> (b^* - \epsilon/2) \;\frac{nN}{N + (9\log N/\log d)} > (b^* - \epsilon)n,\end{aligned}$$
where the last inequality follows from (\ref{eq:NlogN}). This completes the proof.
\end{proofof}

\section{Supercritical regime}
\label{sec:super}
In this section we prove Theorems \ref{thm:mainsup} and \ref{thm:mainsupp}. We start with an outline of our approach. 

We follow a recursive scheme based on the following elementary observations.
Assume given a graphical construction $H$ for the contact process $(\xi^A_t)$ on $\T_n$, and let $m \le n$. For each $x \in \L_n(m)$, we can use the restriction of $H$ to the subtree $\T_n(x)$ to define a contact process $(\xi^{A \cap \T_n(x)}_{\T_n(x),t})_{t \ge 0}$ on this subtree by setting
$$\xi^{A \cap \T_n(x)}_{\T_n(x),t}(y) = I\{A \cap \T_n(x) \times \{0\} \;\leftrightarrow\; (y,t) \text{ inside } \T_n(x)\}, \qquad y \in \T_n(x),\;t\ge 0.$$
The processes $\{(\xi^{A \cap \T_n(x)}_{\T_n(x),t}): x\in \L_n(m)\}$ are evidently all defined in the same probability space. Moreover, they are independent and satisfy, for any $x \in \L_n(m)$, $y \in \T_n(x)$ and $t \ge 0$, $\xi^{A}_t(y) \ge \xi^{A \cap \T_n(x)}_{\T_n(x),t}(y)$.

Our proof is divided into levels, which are numbered from 1 to 4. In each level $k \in \{1,2,3, 4\}$, we obtain a lower bound on the probability of some good event involving the contact process on $\T_n$ (for any large enough $n$) within some time scale $t^{(k)}_n$. The treatment of each level after the first appeals to the previous level, according to the following scheme. In level $k \ge 2$, we decompose the height $n$ of $\T_n$, writing $n = M^{(k)}_{n} + N^{(k)}_{n}$ (this notation will only be used in this outline). We apply the result of level $k-1$ to the $d^{M^{(k)}_{n}}$ contact processes $\{(\xi_{\T_n(x), t})_{t \ge 0}: x \in \L_n(M^{(k)}_{n})\}$. Since the subtrees in which these processes occur have height $N^{(k)}_n$, the time scale $t^{(k)}_n$ is chosen larger than $t^{(k-1)}_{N^{(k)}_{n}}$, so that the processes can satisfy the pertinent event of level $k-1$. We then argue that the good event of level $k$ follows from the occurrence of sufficiently many good events of level $k-1$, which in turn has high probability.

It should be mentioned that Stacey's proof of Theorem \ref{thm:stsup} also follows a similar recursive strategy. We believe the key point that allowed us to improve his result is the use we make of a certain coupling result (Corollary \ref{prop:coupling}, obtained in level 3) in level 4.

Before starting on level 1, we state a general result about the contact process that will be quite useful. Let $G = (V, E)$ be a locally finite graph and assume given a graphical construction for the contact process with rate $\lambda > 0$ on $G$. Given $A, \A \subset V$ and $0 < t_0 < t$, define
\begin{equation}
\mathcal{N}^A_{\A,t_0}(t) = \max\left\{\begin{array}{c}k: \text{ there exist } 0 \le s_1 < s_2 < \cdots < s_k < t - t_0\\\text{such that } s_{i+1} - s_i \ge t_0 \text{ and } \xi^A_{s_i} \cap \A \neq \varnothing \text{ for all } i \end{array} \right\}.\label{eq:defN}\end{equation}
In words, $\mathcal{N}^A_{\A,t_0}(t)$ is the maximal number of disjoint subintervals of length $t_0$ that we can extract from $[0,t]$ with the restriction that at the starting point of each subinterval, at least one vertex of $\A$ must be infected by $\xi^A$.

\begin{lemma}
\label{lem:Rep}  Assume that, for numbers $t_0$ and $\upepsilon_0$ and sets $\A \subset V$ and $E\subset \{0,1\}^V$,  we have
\begin{equation}\label{eq:cond1Rep}\P\Ll[\exists t \le t_0: \xi^B_{t} \in E\Rr] > \upepsilon_0 \qquad \text{for all } B \text{ with } B \cap \A \neq \varnothing.\end{equation}
Let $\upkappa^A = \inf\{t: \xi^A_t \in E\}$. Then, for any $N > 0,\;t > t_0$ and $A \in \{0,1\}^V$ we have
\begin{equation}\label{eq:repCon}\P\Ll[\upkappa^{A} > t,\; \mathcal{N}^A_{\A,t_0}(t) \ge N\Rr] \leq \Ll(1-\upepsilon_0\Rr)^{N}.\end{equation}
\end{lemma}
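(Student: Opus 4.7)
The plan is a greedy stopping-time construction combined with the strong Markov property of the contact process.

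First, I would define stopping times $s_1, s_2, \ldots$ recursively by
$$s_1 = \inf\{s \in [0, t-t_0]: \xi^A_s \cap \A \neq \varnothing\}, \qquad s_{i+1} = \inf\{s \in [s_i + t_0, t-t_0] : \xi^A_s \cap \A \neq \varnothing\},$$
with the usual convention $\inf \varnothing = \infty$. Each $s_i$ is a stopping time for the filtration generated by the graphical construction. The key observation is that, by maximality in the definition \eqref{eq:defN}, one has $\{\mathcal{N}^A_{\A,t_0}(t) \ge N\} = \{s_N < \infty\}$: any feasible sequence of starting points can be pushed to the left to yield $s_1, s_2, \ldots$, so $N$ feasible points exist iff $s_N \le t - t_0$.

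Next, introduce the ``failure events'' $F_i = \{\xi^A_s \notin E \text{ for all } s \in [s_i, s_i + t_0]\}$, defined on $\{s_i < \infty\}$. On $\{s_N < \infty\}$ we have $s_i + t_0 \le s_N + t_0 \le t$ for every $i \le N$, so
$$\{\upkappa^A > t\} \cap \{s_N < \infty\} \subset \bigcap_{i=1}^N F_i \cap \{s_N < \infty\}.$$
It therefore suffices to show that the probability of the right-hand side is at most $(1-\upepsilon_0)^N$.

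For this, I would iterate the strong Markov property. On $\{s_i < \infty\}$, the configuration $\xi^A_{s_i}$ intersects $\A$ by definition of $s_i$, so the hypothesis \eqref{eq:cond1Rep} applied with $B = \xi^A_{s_i}$ gives $\P[F_i^c \mid \mathcal{F}_{s_i}] > \upepsilon_0$, where $\mathcal{F}_{s_i}$ is the stopping-time $\sigma$-algebra. Since $\bigcap_{j=1}^{N-1} F_j \cap \{s_N < \infty\}$ is $\mathcal{F}_{s_N}$-measurable and contained in $\{s_N < \infty\}$,
$$\P\Ll[\bigcap_{i=1}^N F_i \cap \{s_N < \infty\}\Rr] = \E\Ll[\one_{\bigcap_{i=1}^{N-1} F_i \cap \{s_N<\infty\}}\cdot \P[F_N \mid \mathcal{F}_{s_N}]\Rr] \le (1-\upepsilon_0)\,\P\Ll[\bigcap_{i=1}^{N-1} F_i \cap \{s_{N-1}<\infty\}\Rr],$$
using $\{s_N<\infty\} \subset \{s_{N-1}<\infty\}$. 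Iterating $N$ times yields the bound $(1-\upepsilon_0)^N$, completing the proof.

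I do not expect any serious obstacle: the only points to handle carefully are (i) checking that $s_N < \infty$ is equivalent to $\mathcal{N}^A_{\A,t_0}(t) \ge N$ (which follows from the greedy nature of the definition), and (ii) a clean application of the strong Markov property at each $s_i$, which is standard for the graphical-construction contact process.
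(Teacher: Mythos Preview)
Your proposal is correct and follows exactly the approach the paper indicates: the paper omits the proof, remarking only that it is ``a simple consequence of the Markov property'' since each of the $N$ intervals gives an independent chance $\upepsilon_0$ of hitting $E$. Your greedy stopping-time construction and iterated strong Markov argument are the natural way to make this precise, and the details you flag (the equivalence $\{\mathcal{N}^A_{\A,t_0}(t)\ge N\}=\{s_N<\infty\}$ via the greedy choice, and the inclusion $\{\upkappa^A>t,\,s_N<\infty\}\subset\bigcap_i F_i$) are handled correctly.
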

Since this is a simple consequence of the Markov property, we omit the proof. The idea is that in each of the intervals of length $t_0$ that appear in the definition of $\mathcal{N}^A_{\A,t_0}(t)$, the process has probability $\upepsilon_0$ of reaching $E$, so the probability of making $N$ attempts and failing at them all is less than $(1-\upepsilon_0)^N$.

In the rest of this section, we always assume that $\lambda > \lambda_2(\T)$.

\subsection{Level 1: the Salzano - Schonmann estimates}
For our starting level, we simply gather some estimates of \cite{ss98}; the most important of them is Lemma \ref{lem:ss1}(i) below. It implies the extinction time $\uptau_{\T_n}$ is at least linear in $n$ with non-vanishing probability, but more importantly, that there are some deterministic times in which the root of the tree has non-vanishing probability of being infected.

\label{ss:l1}
\begin{lemma} \cite{ss98} There exist $\sigma > 0$, $K, S > 0$ such that, for $n$ large enough and the contact process on $\T_n$,\medskip\\
$(i.)\; \P\Ll[\xi^{o}_{iS}(o) = 1\Rr] > \sigma$ for $i = 0, 1,\ldots, \lfloor n/K \rfloor$;\medskip\\
$(ii.)\; \P\Ll[\big|\xi^{o}_{Sn/(2K)} \cap \mathbb{L}_{n}(n)\big| >(\frac{2}{3} d)^n \Rr] > \sigma.$
\label{lem:ss1} 
\end{lemma}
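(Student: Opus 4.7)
The plan is to exploit the hypothesis $\lambda > \lambda_2(\T)$, which by definition gives strong survival of the contact process on the infinite tree $\T$, and to transfer estimates from $\T$ to the finite tree $\T_n$ through a coupling together with finite-speed-of-propagation bounds. A standard estimate on the Poisson point processes appearing in the graphical construction yields a constant $v > 0$ such that the probability that any infection path from $o$ travels a graph distance larger than $vt$ within time $t$ decays exponentially in $t$.

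For part (i), I would couple the contact process on $\T_n$ with a version of the process on a suitable infinite tree $\hat{\T}$ that contains $\T_n$ as the ball of radius $n$ around $o$ and on which strong survival still holds at parameter $\lambda$. The two processes can be built from the same Harris system and agree on $\T_n$ until the first time an infection path originating from $o$ exits $\T_n$. Taking $K > vS$ (with $S$ to be chosen), the exit event has probability $o(1)$ uniformly in $i \le \lfloor n/K \rfloor$. Since $\P[\xi^o_{iS}(o) = 1 \text{ on } \hat{\T}] \ge p_\infty$ for some $p_\infty > 0$ uniformly in $i$, we deduce $\P[\xi^o_{iS}(o) = 1 \text{ on } \T_n] \ge p_\infty - o(1) \ge p_\infty/2 =: \sigma$ for all $i \le \lfloor n/K \rfloor$ and $n$ large enough.

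For part (ii), the strategy is to estimate $\E[|\xi^o_{t_\star} \cap \L_n(n)|]$ at time $t_\star = Sn/(2K)$ by summing over the $d^n$ downward paths from $o$ to the leaves of $\T_n$. For a single such path, the probability that the infection travels down it in time $t_\star$ is at least $\rho^n$, where $\rho > 0$ depends on $\lambda$ and on the ratio $n/t_\star$; this is a Cram\'er-type estimate for the number of transmissions required along the path during $[0, t_\star]$. Tuning $K$ and $S$ so that $d\rho > 2d/3$ with room to spare yields $\E[|\xi^o_{t_\star} \cap \L_n(n)|] \ge (cd)^n$ for some $c > 2/3$. A Paley--Zygmund argument then produces a positive lower bound for $\P[|\xi^o_{t_\star} \cap \L_n(n)| > (2d/3)^n]$, provided the second moment can be controlled.

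The main obstacle is the second-moment bound in (ii). Pairs of leaves sharing a common ancestor at intermediate height contribute strong correlations; the variance must be controlled by decomposing according to the height of the common ancestor, exploiting independence of the Harris system on disjoint subtrees. This recursive variance analysis, together with the careful tuning of $K$ and $S$ to keep the growth rate above $2d/3$ while respecting $K > vS$ from part (i), is the technical heart of the argument in \cite{ss98}.
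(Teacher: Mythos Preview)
The paper does not prove this lemma; it is quoted from \cite{ss98}. So there is no in-paper argument to compare against, and your task is really to reconstruct the Salzano--Schonmann reasoning. Your plan for part~(i) is sound in outline: couple $\T_n$ with an infinite rooted tree, use a finite-speed bound to keep the two processes equal up to time $O(n)$, and import the uniform lower bound $\P[\xi^o_{iS}(o)=1]\ge p_\infty$ from strong survival on the infinite tree. Note that this last inequality is itself the core content of \cite{ss98}, so you are (correctly) invoking rather than reproving it.

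Part~(ii), however, has a real gap. Your first-moment lower bound comes from forcing the infection to run down a \emph{single} geodesic of length $n$ in time $t_\star = Sn/(2K)$; the resulting $\rho$ depends only on $\lambda$ and on $t_\star/n = S/(2K)$, and not on the branching of the tree. But the speed-of-propagation bound you use in part~(i) forces $S/K$ to be at most of order $1/v$, and on a tree of branching $d$ one has $v \gtrsim d\lambda$ (from the union bound over $\sim d^k$ paths of length $k$). Hence $t_\star/n \lesssim 1/(d\lambda)$, and the probability of crossing a single edge in that time is $O(1/d)$, so $\rho = O(1/d) \ll 2/3$. No amount of tuning of $K$ and $S$ rescues this: making $t_\star/n$ large destroys the coupling in (i), while keeping it small kills the one-path probability. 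More conceptually, your path estimate ignores the hypothesis $\lambda>\lambda_2$; it would have to work equally well for $\lambda$ just above $\lambda_2(\T^d)$, which can be made arbitrarily small by taking $d$ large, and along a one-dimensional path such small $\lambda$ is deeply subcritical.

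The way out is to let the branching do the work. Part~(i) says the root stays infected at the grid of times $iS$, $i\le \lfloor n/K\rfloor$; this gives the root many (order $n$) opportunities to send the infection to each of its $d$ children, so with probability close to $1$ each child is hit. One then iterates inside each subtree (which has height $n-1$ and so again satisfies part~(i)), obtaining a comparison with a branching process whose mean offspring number is close to $d$, not $d\rho$. That is how one gets $(2d/3)^n$ descendants at the leaves with uniformly positive probability; the second-moment argument you sketch is then applied to this branching comparison rather than to geodesic paths. Equivalently, one may first establish the estimate of Lemma~\ref{lem:ss2} (which does use $\lambda>\lambda_2$) and feed it into the first moment.
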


\begin{lemma} \cite{ss98}
\label{lem:ss2} For any $\theta < 1$ there exists $\bar c, \ell > 0$ such that, for any $n$ and any $x, y \in \T_n$, the contact process on $\T_n$ satisfies
$$\P\Ll[\xi^{x}_{\ell\cdot \mathsf{dist}(x,y)}(y) = 1 \Rr] > \bar c \cdot \theta^{\mathsf{dist}(x,y)}.$$
\end{lemma}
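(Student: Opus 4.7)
The plan is to use self-duality to reduce the statement to the case of an ancestor-to-descendant spread, and then to extract the desired probability from Lemma~\ref{lem:ss1}(ii) through a tree-symmetry argument. I would organize the proof into three short steps.

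\textbf{Step 1 (Strengthened downward estimate).} First I would establish a reinforcement of Lemma~\ref{lem:ss1}(ii): for any $\theta < 1$ there exist $\ell_\theta, \sigma_\theta > 0$ such that, for every $x \in \T_n$ with $\T_n(x)$ of height at least $m$,
$$\P\Ll[\,|\xi^x_{\ell_\theta m} \cap \L_n(x,m)| > (\theta d)^m\,\Rr] > \sigma_\theta.$$
Lemma~\ref{lem:ss1}(ii) as stated only yields the constant $2/3$ in place of $\theta$, and a naive iteration of that estimate does not improve the exponent (the factor $\sigma^r$ appearing when one iterates $r$ times dominates the gain). Getting arbitrary $\theta$ requires a block-renormalization argument in the spirit of \cite{ss98}, exploiting strong local survival ($\lambda > \lambda_2$) to show that the supercritical infection propagates at close to the full branching rate $d$. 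I expect this step to be the main obstacle; once it is in hand, the remainder is short.

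\textbf{Step 2 (Descendant case).} Let $y$ be a descendant of $x$ at distance $m$. There is an automorphism of $\T_n$ fixing $x$ that maps $y$ to any other prescribed vertex of $\L_n(x,m)$, so $\P[\xi^x_{\ell_\theta m}(y') = 1]$ does not depend on $y' \in \L_n(x,m)$. Taking expectations in the estimate of Step~1,
$$\P[\xi^x_{\ell_\theta m}(y) = 1] = \frac{\E[|\xi^x_{\ell_\theta m} \cap \L_n(x,m)|]}{d^m} \ge \frac{\sigma_\theta(\theta d)^m}{d^m} = \sigma_\theta\, \theta^m,$$
which is the claim when $y$ lies in $\T_n(x)$.

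\textbf{Step 3 (General case via least common ancestor).} For arbitrary $x, y \in \T_n$, let $z$ be their least common ancestor and write $k_1 = \mathsf{dist}(x,z)$, $k_2 = \mathsf{dist}(z,y)$, $k = k_1 + k_2$. Set $\ell = \ell_\theta$. Forcing the infection path from $(x,0)$ to $(y,\ell k)$ to pass through $(z, \ell k_1)$ and to stay inside $\T_n(z)$ after time $\ell k_1$, and using the independence of the graphical construction on disjoint time windows,
$$\P[\xi^x_{\ell k}(y) = 1] \ge \P[\xi^x_{\ell k_1}(z) = 1] \cdot \P[\xi^z_{\T_n(z),\,\ell k_2}(y) = 1].$$
The second factor is $\ge \sigma_\theta \theta^{k_2}$ by Step~2 applied inside $\T_n(z)$. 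For the first factor, self-duality gives $\P[\xi^x_{\ell k_1}(z) = 1] = \P[\xi^z_{\ell k_1}(x) = 1]$, and the trivial monotonicity $\xi^z_t(x) \ge \xi^z_{\T_n(z),\,t}(x)$ reduces this again to Step~2 (since $x$ is a descendant of $z$ at distance $k_1$), giving $\ge \sigma_\theta \theta^{k_1}$. Multiplying yields the lemma with $\bar c = \sigma_\theta^{\,2}$.
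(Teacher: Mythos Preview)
The paper does not give a proof of this lemma; it simply cites \cite{ss98}. So there is nothing to compare against in the strict sense.

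Your roadmap is sound. Steps~2 and~3 are correct and elementary: the symmetry argument in Step~2 works because the automorphisms of $\T_n(x)$ (extended by the identity outside $\T_n(x)$) act transitively on $\L_n(x,m)$, and the decomposition in Step~3 via the least common ancestor, Markov property on disjoint time intervals, self-duality, and the trivial monotonicity $\xi^z_t \ge \xi^z_{\T_n(z),t}$ is all correct. One small point of hygiene: for Step~3 to go through you need Step~1 (and hence Step~2) for the \emph{restricted} process on $\T_n(x)$, not the ambient process on $\T_n$; you implicitly acknowledge this when you write ``Step~2 applied inside $\T_n(z)$'', but it would be cleaner to state Step~1 that way from the outset (it is the stronger statement, and the one that matches Lemma~\ref{lem:ss1}(ii) anyway).

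The real content, as you correctly identify, is Step~1: pushing the branching factor in Lemma~\ref{lem:ss1}(ii) from $\frac{2}{3}d$ up to $\theta d$ for arbitrary $\theta < 1$. You are right that naive iteration of Lemma~\ref{lem:ss1}(ii) does not do this, and that one needs the block argument from \cite{ss98}. So your proof is not self-contained either; it reduces the citation of Lemma~\ref{lem:ss2} to a citation of a sharpened Lemma~\ref{lem:ss1}(ii). That is a useful reduction, since it isolates exactly which estimate from \cite{ss98} is doing the work, but it does not constitute an independent proof.
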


Putting these two lemmas together, we get
\begin{corollary}\label{cor:2lem}
Assume that $n_1 \le n$ and $A \subset \T_n$ is such that $A \cap B(o,n_1)\neq \varnothing$. Then,
$$\P\Ll[ |\xi^A_t \cap \L_n(n_1)| \ge (2d/3)^{n_1} \text{ for some } t \le \left(\ell + S/(2K)\right)n_1\Rr] \ge \bar c \theta^{n_1}\cdot \sigma.$$
\end{corollary}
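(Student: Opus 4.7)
The plan is to chain the two cited Salzano--Schonmann inputs in two conceptually distinct steps: first drive the infection from some vertex of $A$ to the root $o$ (using Lemma \ref{lem:ss2}), then, with $o$ playing the role of a fresh launching point, apply Lemma \ref{lem:ss1}(ii) on the depth-$n_1$ subtree rooted at $o$ to grow a population of size $(2d/3)^{n_1}$ on level $n_1$.

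To set things up, I would pick an arbitrary $x \in A \cap B(o, n_1)$ and set $k = \mathsf{dist}(o,x) \le n_1$. Fixing some $\theta \in (0,1)$ (the precise value is immaterial since we only need existence of $\bar c > 0$), and letting $\bar c, \ell$ be the constants furnished by Lemma \ref{lem:ss2}, monotonicity in the initial condition combined with Lemma \ref{lem:ss2} applied to the pair $(x,o)$ shows that the event $E_1 = \{\xi^A_{\ell k}(o) = 1\}$ has probability at least $\bar c\,\theta^k \ge \bar c\, \theta^{n_1}$. Crucially, $E_1$ depends only on the Harris system on $[0, \ell k]$.

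Next, on $E_1$ the configuration $\xi^A_{\ell k}$ contains $o$, so monotonicity in the initial condition (applied at time $\ell k$) together with the restriction monotonicity recorded at the start of Section \ref{sec:super} allows me to dominate $\xi^A$ from below, after time $\ell k$, by the contact process on the depth-$n_1$ subtree rooted at $o$, started afresh from $\{o\}$. That subtree is rooted-isomorphic to $\T_{n_1}$, so Lemma \ref{lem:ss1}(ii) (invoked via the strong Markov property at $\ell k$) produces an event $E_2$ of probability at least $\sigma$, measurable with respect to the Harris system over $[\ell k, \ell k + Sn_1/(2K)]$, on which more than $(2d/3)^{n_1}$ vertices of $\L_n(n_1)$ are infected at time $\ell k + Sn_1/(2K)$. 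Since $E_1$ and $E_2$ use disjoint time slices of the Harris system they are independent, giving $\P(E_1 \cap E_2) \ge \bar c\,\theta^{n_1}\sigma$, and the identity $\ell k + Sn_1/(2K) \le (\ell + S/(2K))n_1$ delivers the time bound.

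The step I expect to require the most care, and hence the main obstacle in writing this cleanly, is the second: confirming that Lemma \ref{lem:ss1}(ii), stated originally for the contact process on $\T_n$ itself, transfers faithfully to the depth-$n_1$ subtree of $\T_n$ rooted at $o$. This is ultimately a bookkeeping matter, handled by the rooted-isomorphism between that subtree and $\T_{n_1}$ together with the pointwise monotonicity recalled at the start of Section \ref{sec:super}, but it is the one point where the two lemmas must be spliced rather than quoted off the shelf.
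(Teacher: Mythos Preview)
Your proposal is correct and follows essentially the same two-step approach as the paper's proof: first reach the root via Lemma~\ref{lem:ss2}, then propagate to level $n_1$ via Lemma~\ref{lem:ss1}(ii). The paper phrases the second step via conditioning and the Markov property while you use independence of the Harris system on disjoint time slices, but these are equivalent; you are also more explicit than the paper about the restriction to the depth-$n_1$ subtree and its isomorphism with $\T_{n_1}$, which is exactly the splicing issue you flag as the main obstacle.
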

Indeed, given $x \in A$ with $\mathsf{dist}(o,x) \leq n_1$, with probability larger than $\bar{c}\theta^{\mathsf{dist}(o,x)}\geq \bar{c}\theta^{n_1}$, we have $(x,0)\;\lra\;(o,s)$ for some $s \leq \ell \cdot n_1$. Conditioned on this event, with probability larger than $\sigma$ the resulting infection present at $o$ at time $s$ further propagates, reaching $(2d/3)^{n_1}$ vertices of $\mathbb{L}_n(n_1)$ at time $s + n_1 \cdot S/(2K)$.\\

\noindent \textbf{Summary of constants.
} From now on, we denote $\bar d = \frac{2}{3}d$. Once and for all, we fix $\theta < 1$, together with constants $v_0 < v_1$ chosen so that 
\begin{equation}
\label{eq:defu}
1 < 1/\theta < v_i^{1/6} < v_i^{1/2} < \bar d, \quad i = 0, 1.
\end{equation}
For this value of $\theta$, we fix the constants $\ov{c}$ and $\ell$ as given by Lemma~\ref{lem:ss2}. The constants $\sigma$, $K$ and $S$ from Lemma~\ref{lem:ss1} will also be kept fixed throughout.

\subsection{Level 2: a set of configurations with high return probability}
\label{ss:l2}
For $n \in \N$ large enough, we can choose $n_1 \in \N$ and $u_n \in [v_0,\;v_1]$ such that 
\begin{equation}\label{eq:decomp} n = n_1 + n_2,\quad \text{where } n_2 = (u_n)^{n_1}.\end{equation}

In this section we perform our first recursion; let us give a rough sketch of what this will be. Using Lemma \ref{lem:ss1}(i.), we will argue that, if many of the roots of the subtrees $\{\T_n(x):x \in \L_n(n_1)\}$ are infected at time $t$, then for an amount of time that is linear in $n_2$ (hence of order $(u_n)^{n_1}$), in every $S$ time units some of these roots will again be infected. Every time one of them is infected, the infection gets an attempt of travelling down to the root of $\T_n$, and from there propagating back up to many other subtrees rooted in $\L_n(n_1)$. By Corollary~\ref{cor:2lem}, the probability that an attempt is successful is $\bar c \theta^{n_1}\cdot \sigma$. Comparing $\theta$ to $u_n$, it will be easy to see that with high probability we will have a successful attempt.

Using these ideas, we will obtain a set $\mathcal{G}_n \subset \{0,1\}^{\T_n}$ (not containing the empty configuration) with the property that, if $\xi_t \in \mathcal{G}_n$, then with probability larger than $1-e^{-n^{1/3}}$, $\xi_{t+\sqrt{n}}$ is also in $\mathcal{G}_n$.

\begin{proposition}\label{prop:main}
For $n$ large enough, there exists $\mathscr{G}_n \subset \{0,1\}^{\T_n}$ such that, if $A \in \mathscr{G}_n$ and $t \in [n^{1/2}/{ 4},\; n^{1/2}]$,
\begin{equation}
\label{eq:prop1}\P\Ll[\xi^{A}_t \in \mathscr{G}_n \text{ and } \exists t' \leq n^{1/2}/8: \xi^A_{t'}(o) = 1 \Rr] > 1-e^{-n^{1/3}}.\end{equation}
Additionally, for all $t \in [n^{1/2}/{2},\; n^{1/2}]$,
\begin{equation}
\label{eq:prop3} \P\Ll[\xi^{o}_t \in \mathscr{G}_n\Rr] > \sigma(1 - e^{n^{-1/3}}) > \sigma/2.
\end{equation}
Finally, if $A, A' \in \mathscr{G}_n$ and $t \in [n^{1/2}/{ 2},\; n^{1/2}]$,
\begin{equation}
\label{eq:prop4} \P\Ll[\xi^{A}_t \cap A' \neq \varnothing\Rr] > n^{-1}.
\end{equation}
\end{proposition}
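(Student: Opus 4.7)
My plan is to take
\[
\mathscr{G}_n \;=\; \bigl\{A \subset \T_n : |A \cap \L_n(n_1)| \ge \bar d^{n_1}\bigr\}
\]
(possibly enlarged by an absolute constant factor in the threshold to absorb sub-process fluctuations). The philosophy is that vertices of $A \cap \L_n(n_1)$ act as ``seeds'' whose private subtrees $\T_n(x)$ (of height $n_2 \ge n/2 \gg \sqrt n$) keep $x$ infected at many times $iS$ via Lemma \ref{lem:ss1}(i); each seed gets independent attempts to push the infection down to $o$ by Lemma \ref{lem:ss2} and back up to $\L_n(n_1)$ by Lemma \ref{lem:ss1}(ii), each attempt succeeding with probability $\ge \bar c \theta^{n_1}\sigma$ by Corollary \ref{cor:2lem}.

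For \eqref{eq:prop1}, I would invoke Lemma \ref{lem:Rep} with $\A = \L_n(n_1)$, $E = \mathscr G_n$, $t_0 := (\ell + S/(2K)) n_1$ and $\upepsilon_0 := \bar c \theta^{n_1} \sigma$, so that \eqref{eq:cond1Rep} becomes exactly Corollary \ref{cor:2lem}. To feed the lemma I need $\mathcal N^A_{\A,t_0}(\sqrt n/8) \ge m$ with overwhelming probability, where $m := \lfloor \sqrt n/(16 t_0)\rfloor$: for each candidate time $i t_0$ and each $x \in A \cap \L_n(n_1)$, Lemma \ref{lem:ss1}(i) applied to the independent subprocess $\xi^x_{\T_n(x),\cdot}$ gives $\ge \sigma$ probability of $x$ being infected at time $it_0$, so independence across the $\ge \bar d^{n_1}$ subtrees yields $\P[\text{no }x\text{ infected at }it_0] \le (1-\sigma)^{\bar d^{n_1}} \le \exp(-\sigma n^{c_0})$ for some $c_0 > 0$, and a union bound gives $\P[\mathcal N^A_{\A,t_0}(\sqrt n/8) < m] \le \exp(-n^{c_0/2})$. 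Combining with Lemma \ref{lem:Rep} bounds the failure probability by $(1-\upepsilon_0)^m \le \exp(-\upepsilon_0 m)$, and the constraint $v_0 > \theta^{-6}$ of \eqref{eq:defu} yields $\theta^{n_1} \ge n^{-1/6+\delta}$ for some $\delta > 0$ (since $n_1 \log u_n \le \log n$), so $\upepsilon_0 m$ exceeds $n^{1/3}$ with room to spare. The ``root-hit'' clause comes free: Corollary \ref{cor:2lem} routes the infection through $o$ by construction.

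The main obstacle is that this argument yields $\xi^A_{s^*} \in \mathscr G_n$ for \emph{some} $s^* \le \sqrt n/8$ rather than at the specific prescribed time $t$. I would promote the former to the latter by iterating via the Markov property and using a further sub-process Chernoff bound to show that $|\xi^A_s \cap \L_n(n_1)|$ stays comparable to $\bar d^{n_1}$ between consecutive boost events, each boost producing $\bar d^{n_1}$ fresh seeds of which a $\sigma$-fraction persists at every subsequent time $iS$. Handling this gap is where the constant-factor enlargement of the threshold in $\mathscr G_n$ plays its role.

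Assertion \eqref{eq:prop3} follows from a short warmup: Lemma \ref{lem:ss1}(ii) applied to the top $n_1$ levels of $\T_n$ (isomorphic to $\T_{n_1}$, inclusion giving monotonicity) yields $\xi^o_{s_0} \in \mathscr G_n$ with probability $\ge \sigma$ at $s_0 := Sn_1/(2K) = O(n_1) \ll \sqrt n$, and composing with \eqref{eq:prop1} over the remaining interval $t - s_0 \in [\sqrt n/4, \sqrt n]$ gives the stated bound. For \eqref{eq:prop4} I bypass \eqref{eq:prop1} entirely and argue via a single path: choose $x \in A \cap \L_n(n_1)$ and $y \in A' \cap \L_n(n_1)$ (non-empty since $A, A' \in \mathscr G_n$), note $\mathsf{dist}(x,y) \le 2n_1$, and invoke Lemma \ref{lem:ss2} to get $\P[\xi^x_{\ell \cdot \mathsf{dist}(x,y)}(y) = 1] \ge \bar c \theta^{2n_1}$. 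Once $y$ is infected at time $\ell \cdot \mathsf{dist}(x,y) \le 2\ell n_1 \ll t$, Lemma \ref{lem:ss1}(i) applied to the sub-process on $\T_n(y)$ combined with the rate-1 exponential law of the first recovery mark at $y$ shows that $y$ is still infected at time $t$ with conditional probability at least $\sigma e^{-S}$. Independence of disjoint graphical-construction regions plus monotonicity yield
\[
\P[\xi^A_t \cap A' \ne \varnothing] \;\ge\; \P[\xi^x_t(y) = 1] \;\ge\; \bar c \sigma e^{-S} \theta^{2 n_1},
\]
and since \eqref{eq:defu} forces $\theta^{2 n_1} \ge n^{-1/3 + 2\delta}$ for some $\delta > 0$, this comfortably exceeds $n^{-1}$ for $n$ large.
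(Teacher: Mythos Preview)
Your overall strategy is sound and close in spirit to the paper's: seeds at $\L_n(n_1)$, the repetition Lemma~\ref{lem:Rep} fed by Corollary~\ref{cor:2lem}, and Lemma~\ref{lem:ss1}(ii) for the warm-up from $o$. Your arguments for \eqref{eq:prop3} and \eqref{eq:prop4} are essentially correct (the paper proves \eqref{eq:prop4} instead via a duality argument, connecting a forward path from $A$ and a dual path from $A'$ through the root, but your direct route works for the structural $\mathscr{G}_n$).

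Where you genuinely differ from the paper is in the \emph{definition} of $\mathscr{G}_n$. You take the structural set $\{A:|A\cap\L_n(n_1)|\ge c\,\bar d^{n_1}\}$; the paper instead takes a functional set
\[
\mathscr{G}_n=\Bigl\{A:\upphi(A)>1-e^{-\tfrac12 n^{1/2}}\Bigr\},\qquad \upphi(A):=\P\bigl[(\xi^A_s)_{0\le s\le n^{1/2}}\in F\bigr],
\]
where $F$ is the set of trajectories that intersect $\L_n(n_1)$ in every $S$-window. The paper first shows $|A\cap\L_n(n_1)|\ge\bar d^{n_1}\Rightarrow A\in\mathscr{G}_n$ (via exactly your sub-tree argument), and then, letting $\upkappa=\inf\{s:|\xi^A_s\cap\L_n(n_1)|\ge\bar d^{n_1}\}$, disposes of your ``main obstacle'' in one line:
\[
\E\bigl[(1-\upphi(\xi^A_t))\cdot I_{\{\upkappa\le t\}}\bigr]\le e^{-n^{1/2}}\quad\Longrightarrow\quad \P[\xi^A_t\notin\mathscr{G}_n,\;\upkappa\le t]\le e^{-\tfrac12 n^{1/2}}
\]
by Markov's inequality. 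This is the payoff for defining $\mathscr{G}_n$ through $\upphi$: membership at the \emph{deterministic} time $t$ follows from a trajectory property holding over $[\upkappa,\upkappa+2n^{1/2}]$, with no need to count seeds at time $t$ itself.

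Your structural route can be made to work, but the fix you sketch is incomplete as written. Lemma~\ref{lem:ss1}(i) only controls the seed count at the discrete times $\upkappa+jS$, not at the prescribed $t$; you still need one more step (e.g.\ survival of the residual interval of length $\le S$, at cost $e^{-S}$ per seed) to pass from $\upkappa+\lfloor (t-\upkappa)/S\rfloor S$ to $t$, and the threshold in $\mathscr{G}_n$ must accordingly be \emph{lowered} (not ``enlarged'') to something like $\tfrac14\sigma e^{-S}\bar d^{n_1}$. Also, your invocation of Lemma~\ref{lem:ss1}(i) ``at time $it_0$'' is incorrect: the lemma speaks only at times $iS$, and you must thin this grid to extract $t_0$-separated visits for feeding $\mathcal N^A_{\L_n(n_1),t_0}$. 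None of this is fatal, but the paper's $\upphi$-trick is what keeps the argument short.
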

\begin{proof}
We fix $n$ large enough that the decomposition (\ref{eq:decomp}) is possible; we also assume that $n$ is large enough that the trees of height $n_1$ and $n_2$ satisfy the properties stated in Lemma \ref{lem:ss1}. 

Define
$$F =\left\{\begin{array}{l}f \in D_n([a,b]): 0\leq a < b; \text{ for every interval } \text{$J \subset [a,\;b]$}\\\text{with $|J| = S$, there exists $s \in J$ such that } f(s)\cap \mathbb{L}_{n}(n_1) \neq \varnothing\end{array}\right\}.$$
Let us show that
\begin{equation}\text{if }|A \cap \mathbb{L}_{n}(n_1)| > \bar d^{n_1}, \text{then } \P\Ll[\;(\xi^{A}_t)_{0 \leq t \leq 2n^{1/2}} \in F\Rr] > 1 - e^{-n^{1/2}}. \label{eq:goodstart}\end{equation}
Indeed, if $|A \cap \mathbb{L}_{n}(n_1)| > \bar d^{n_1}$, then, by applying Lemma \ref{lem:ss1} to the trees $\T_n(x)$ for $x \in A \cap \L_n(n_1)$, we see that for $i = 0, 1, \ldots, \lfloor n_2/K \rfloor$, $|\xi^{A}_{iS} \cap \mathbb{L}_{n}(n_1)|$ stochastically dominates a $\mathrm{Binomial}(\bar d^{n_1}, \sigma)$ random variable. The probability that $\xi^{A}_{iS} \cap \mathbb{L}_{n}(n_1)$ is empty for some $i \in \{0, 1, \ldots, \lfloor n_2/K \rfloor\}$ is thus smaller than
$$\frac{n_2}{K}\cdot (1-\sigma)^{\bar d^{n_1}} \leq \frac{n_2}{K}\cdot e^{-\sigma \cdot \bar d^{n_1} } \leq (2 u_n)^{n_1} \cdot e^{-\sigma \cdot \bar d^{n_1}} = e^{\log( 2 u_n)n_1 - \sigma \cdot \bar d^{n_1}}< e^{-(\sigma/2) \cdot \bar d^{n_1}}< e^{-n^{1/2}}$$
if $n_1$ is large, since $n^{1/2} = (n_1 + (u_n)^{n_1})^{1/2} <(2(u_n)^{n_1})^{1/2} = \sqrt{2} ((u_n)^{1/2})^{n_1} < \bar d^{n_1}$, as $(u_n)^{1/2} < \bar d$. So, outside of probability $e^{-n^{1/2}}$, the desired property of $\mathbb{L}_n(n_1)$ never being empty for more than $S$ time units is satisfied up to time $\lfloor n_2/k \rfloor > 2n^{1/2}$. This proves (\ref{eq:goodstart}).

For $A \subset {\T_n}$, let $$\upphi(A) = \P\Ll[\;(\xi^{A}_t)_{0 \leq t \leq n^{1/2}} \in F\Rr] \qquad \text{and} \qquad \mathscr{G}_n = \left\{A \subset \T_n: \upphi(A) > 1 - e^{-\frac{1}{2}n^{1/2}}\right\}.$$
Note that (\ref{eq:goodstart}) implies that 
\begin{equation}\label{eq:goodstart2}\{A: |A \cap \mathbb{L}_{n}(n_1)| > \bar d^{n_1}\} \subset \mathscr{G}_n.\end{equation} 

We are now ready to start our proof of (\ref{eq:prop1}). Fix $A \in \mathscr{G}_n$ and $t \in [n^{1/2}/4,\;n^{1/2}]$. Define $$\upkappa = \inf\{s \geq 0: |\xi^{A}_s \cap \mathbb{L}_{n}(n_1)|>\bar d^{n_1} \}.$$ We have
\begin{equation}\begin{split}\P\Ll[\xi^{A}_{t} \notin \mathscr{G}_n \Rr] \leq  \P\Ll[(\xi^{A}_s)_{0\leq s \leq t} \notin F\Rr] &+ \P\Ll[(\xi^{A}_s)_{0\leq s \leq t} \in F,\;\upkappa > t\Rr] + \P\Ll[\xi^{A}_{t} \notin \mathscr{G}_n,\; \upkappa \leq t \Rr].\label{eq:upkappa}\end{split}\end{equation}
The first term on the right-hand side is less than $e^{-\frac{1}{2}n^{1/2}}$ by the definition of $\mathscr{G}_n$. Let us bound the second and third terms, starting with the third term. Note that
$$\begin{aligned}\E\left[\left(1-\upphi(\xi^{A}_{t})\right) \cdot I_{\{\upkappa \leq t\}} \right]&=\sum_{B \subset \T_n} \mathbb{P}\left[\upkappa \leq t,\;\xi^A_t = B\right] \cdot \mathbb{P}\left[(\xi^B_s)_{0\leq s \leq n^{1/2}} \notin F\right]\\
&=\P\Ll[\upkappa \leq t,\;(\xi^{A}_s)_{t \leq s \leq t+n^{1/2}} \notin F  \Rr] <e^{-n^{1/2}}
\end{aligned}$$
by (\ref{eq:goodstart}). Then, by Markov's inequality,
$$\P\Ll[\xi^{A}_{t} \notin \mathscr{G}_n,\; \upkappa \leq t  \Rr] = \P\Ll[\left(1-\upphi(\xi^{A}_{t})\right)\cdot I_{\{\upkappa \leq t \}} \geq e^{-\frac{1}{2}n^{1/2}}\;\Rr] \leq \frac{e^{-n^{1/2}}}{e^{-\frac{1}{2}n^{1/2}}} = e^{-\frac{1}{2}n^{1/2}}.$$

We will now bound $\P\Ll[(\xi^{A}_s)_{0\leq s \leq t} \in F,\;\upkappa > t\Rr]$. Let $r_0 = \left(\ell + \frac{S}{2K}\right)n_1$. We first note that, since $S \ll r_0$ when $n_1$ is large, on $\left\{(\xi^A_s)_{0 \le s\le t} \in F\right\}$ we can find $t_1 < t_2 < \cdots < t_{\lfloor t/(2r_0) \rfloor}$ such that $t-t_{\lfloor t/(2r_0) \rfloor} > r_0$, $t_{i+1} - t_{i}>r_0$ and $\xi^A_{t_i} \cap \L_n(n_1) \neq \varnothing$ for each $i$. This implies that
$$\left\{(\xi^A_s)_{0\le s \le t} \in F\right\} \subset \left\{\mathcal{N}^A_{\L_n(n_1), r_0}(t) \ge \lfloor t/(2r_0) \rfloor\right\},$$
where $\mathcal{N}^A_{\L_n(n_1), r_0}(t)$ is as in (\ref{eq:defN}).
Therefore,
\begin{equation}\P\Ll[(\xi^{A}_s)_{0\leq s \leq t} \in F,\;\upkappa > t\Rr] \le \P\Ll[\mathcal{N}^A_{\L_n(n_1), r_0}(t) \ge \lfloor t/(2r_0)\rfloor,\;\upkappa > t\Rr].\label{eq:FtoN}\end{equation}
We now bound the right-hand side with Lemma \ref{lem:Rep}. We set the parameters of Lemma \ref{lem:Rep} as follows:
$$\A = \mathbb{L}_{n}(n_1),\quad t_0 = r_0,\quad E = \{B \subset \T_n: |B \cap \mathbb{L}_{n}(n_1)| >\bar d^{n_1}\},\quad \upepsilon_0 = \bar c \sigma \theta^{n_1}.$$
Note that (\ref{eq:cond1Rep}) follows from Corollary \ref{cor:2lem}. The right-hand side of (\ref{eq:FtoN}) is then less than
$$\begin{aligned}&\exp\left(-\bar c \sigma \theta^{n_1}\cdot \left\lfloor \frac{t}{2r_0} \right\rfloor \right) \le \exp\left(-\frac{\bar c\sigma}{16\left(\ell + \frac{S}{2K} \right)} \cdot \frac{1}{n_1}\cdot \theta^{n_1}\cdot (n_1 + u_n^{n_1})^{1/2} \right) < \frac{1}{4}e^{-n^{1/3}}\end{aligned}$$
if $n$ is large enough, since $\theta u_n^{1/2} > u_n^{1/3}$ by (\ref{eq:defu}).

Going back to (\ref{eq:upkappa}), we have proved that
$$\P\Ll[ \xi^A_t \in \mathcal{G}_n \Rr] \ge 1-e^{-\frac{1}{2}n^{1/2}}-e^{-\frac{1}{2}n^{1/2}}-\frac{1}{4}e^{-n^{1/3}} > 1 - \frac{1}{2}e^{-n^{1/3}}.$$
By a simpler application of Lemma \ref{lem:Rep} than the one explained above, we also get
$$\P\Ll[ \exists t'\leq n^{1/2}/8: \xi^A_{t'}(o) = 1 \Rr] \ge 1-\frac{1}{2}e^{-n^{1/3}}.$$
This completes the proof of (\ref{eq:prop1}).

(\ref{eq:prop3}) follows from (\ref{eq:goodstart2}), (\ref{eq:prop1}), Lemma \ref{lem:ss1}(ii.) and the fact that $(S/2K)n_1 < n^{1/2}/4$.

Let us now prove (\ref{eq:prop4}). Define the events
$$\begin{aligned}
&B_1 = \left\{\exists s^* \in [t-2S-2\ell n_1,\;t-S-2\ell n_1],\; x^*\in \mathbb{L}_{n}(n_1): A \times \{0\} \leftrightarrow (x^*, s^*)\right\};\\
&B_2 = \left\{\exists s^{**} \in [t-S,\;t],\; x^{**}\in \mathbb{L}_{n}(n_1): (x^{**},s^{**}) \leftrightarrow A' \times \{t\}\right\}.
\end{aligned}$$
We have $\P[B_1] \geq \P\Ll[(\xi^{A}_s)_{0\leq s \leq t} \in F \Rr] \geq 1 - e^{-n^{1/3}}$ and, by the same consideration for the dual process, $\P[B_2]\geq 1 - e^{-n^{1/3}}$. Also,
$$\begin{aligned}
\P\Ll[\xi^{A}_t \cap A' \neq \varnothing \;|\;B_1 \cap B_2 \Rr] &{\geq}\; \P\Ll[(x^*,s^*)\;\lra\;(o, s^* + \ell n_1)\;\lra\;(o, s^{**} - \ell n_1)\;\lra\;(x^{**}, s^{**}) \;|\; B_1 \cap B_2\Rr]\\&{\geq}\; {\ov{c}^2}\theta^{2n_1}\cdot e^{-2S},
\end{aligned}$$
by Lemma \ref{lem:ss2} and the fact that $s^{**} - s^{*} \in [2\ell n_1,\; 2\ell n_1 + 2S]$. We have thus shown that
$$\P\Ll[\xi^{A}_t \cap A' \neq \varnothing \Rr] \geq (1-2e^{-n^{1/3}})\cdot \bar{c}^2 \theta^{2n_1}\cdot e^{-2S} > 1/n$$
for $n$ large, since $\theta^2 > 1/u_n$.
\end{proof}

\subsection{Level 3: survival and coupling for time $\exp\big(d^{n^{1/5}}\big)$}
\label{ss:l3}
Define
$$n = m_1 + m_2,\; \text{where } m_1 = \lfloor n^{1/4} \rfloor.$$
Given $\xi \in \{0,1\}^{\T_n}$ and $x \in \L_n(m_1)$, we will say that $\xi \cap \T_n(x) \in \mathscr{G}_{m_2}$ if $\xi \cap \T_n(x)$, when seen as a configuration for a tree of height $m_2$, is in the set $\mathscr{G}_{m_2}$ defined in the previous subsection. For $A \subset {\T_n}$, define
$$\Gamma(A) = \left|\left\{x \in \mathbb{L}_{n}(m_1): A \cap \T_n(x) \in \mathscr{G}_{m_2}\right\} \right|.$$

Suppose that we have $\Gamma(\xi_0) = a \in (0,d^{m_1}) \cap \Z$ and that $t \in [(1/2)m_2^{1/2}, m_2^{1/2}]$. Applying the results of level 2, it will be easy to prove that with high probability, for all $x \in \L_n(m_1)$ such that $\xi_0 \cap \T_n(x) \in \mathcal{G}_{m_2}$, we will also have $\xi_t \cap \T_n(x) \in \mathcal{G}_{m_2}$. In other words, with high probability $\Gamma(\xi_t) \ge \Gamma(\xi_0)$. Additionally, from time 0 to time $t$, the infection gets an attempt to reach a subtree $\bar{T} \in \{\T_n(x):x\in\L_n(m_1),\xi_0 \cap \T_n(x)\notin \mathcal{G}_{m_2}\}$ and spread sufficiently inside it that we get $\xi_t \cap \bar{T} \in \mathcal{G}_{m_2}$. If such an attempt is successful, we have $\Gamma(\xi_t) > \Gamma(\xi_0)$.

With this in mind, we will argue that, if $0 = t_0 < t_1 < \ldots$ is a sequence of times with $\frac{1}{2}m_2^{1/2} \leq t_{i+1} - t_i \leq m_2^{1/2}$ for each $i$ and $(\xi_t)$ is the contact process on $\T_n$ with an arbitrary initial configuration, then $(\Gamma(\xi_{t_0}),\;\Gamma(\xi_{t_1}),\ldots)$ is stochastically larger than a certain Markov chain $Z^{(n)}$ with state space $(-\infty,\;d^{m_1}] \cap \Z$ that tends to move much more to the right than to the left.

It will be convenient to write
$$h_k(n) = d^{\lfloor n^{1/k} \rfloor},\; \text{ for } k \in \N.$$
Note that $|\L_n(m_1)| = d^{m_1} = d^{\lfloor n^{1/4} \rfloor} = h_4(n)$.

Let us now define the transition kernel $P(\cdot, \cdot)$ of $Z^{(n)}$. Let $p^{(n)}$ be the probability mass function for the Binomial$\left(h_4(n),\; \exp(-n^{1/3})\right)$ distribution. Define
\begin{equation}
\label{def:Z}
\begin{array}{l} \underline{\text{for } 1 \leq k < h_4(n):}\medskip\\ P(k, k+1) = p^{(n)}(0)\cdot \frac{\bar c\sigma}{2}\theta^{2m_1};\medskip\\P(k,k) = p^{(n)}(0)\cdot \left(1-\frac{\bar c\sigma}{2}\theta^{2m_1} \right);\medskip\\P(k, k-a) = p^{(n)}(a),\; a > 0;\end{array} \qquad \begin{array}{l}\underline{\text{for }k = h_4(n):}\medskip\\ P(k, k-a) = p^{(n)}(a),\; a \ge 0;\medskip\\\;\medskip\\\;\end{array} \qquad \begin{array}{l}\underline{\text{for }k \leq 0:} \medskip\\P(k,k)=1\medskip\\\;\medskip\\\;\end{array}
\end{equation}
(obviously, $P(k, \ell) = 0$ for all values of $\ell$ for which we have not explicitly defined it).

\begin{lemma} \label{lem:compRW}
For $n$ large enough and the contact process $(\xi_t)$ on $\T_n$, the following holds. If $0 = t_0 < t_1 < \ldots$ are such that $\frac{1}{2}m_2^{1/2} \leq t_{i+1} - t_i \leq m_2^{1/2}$ for each $i$ and $\Gamma(\xi_0) = a$, then $\left(\Gamma(\xi_{t_i})\right)_{i \geq 0}$ stochastically dominates the Markov chain $(Z^{(n)}_i)_{i \geq 0}$ with initial state $Z^{(n)}_0 = a$.
\end{lemma}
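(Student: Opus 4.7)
The plan is to prove the one-step stochastic domination---conditional on $\mathcal{F}_{t_i}$, the law of $\Gamma(\xi_{t_{i+1}})$ stochastically dominates the kernel $P(\Gamma(\xi_{t_i}),\cdot)$---and then iterate with the Markov property. Fix $i$, set $s_i := t_{i+1}-t_i \in [m_2^{1/2}/2, m_2^{1/2}]$, condition on $\xi_{t_i}$, and write $k := \Gamma(\xi_{t_i})$ and $G := \{x \in \L_n(m_1) : \xi_{t_i} \cap \T_n(x) \in \mathscr{G}_{m_2}\}$. By the observations at the outset of Section~\ref{sec:super}, the restriction of the graphical construction to each $\T_n(x) \times [t_i, t_{i+1}]$ drives a contact process $\zeta^{(x)}$ on the subtree $\T_n(x)$ starting from $\xi_{t_i}\cap\T_n(x)$; the processes $\{\zeta^{(x)}\}_{x\in\L_n(m_1)}$ are mutually independent and satisfy $\zeta^{(x)}_{s_i} \subset \xi_{t_{i+1}} \cap \T_n(x)$. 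Since the set $F$ used in the proof of Proposition~\ref{prop:main} is an upset and the contact process is attractive, $\mathscr{G}_{m_2}$ is an upset under inclusion, so $\zeta^{(x)}_{s_i} \in \mathscr{G}_{m_2}$ automatically gives $\xi_{t_{i+1}}\cap\T_n(x)\in\mathscr{G}_{m_2}$.

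For the losses, apply Proposition~\ref{prop:main}(\ref{eq:prop1}) to each $\zeta^{(x)}$, $x \in G$, on the height-$m_2$ tree $\T_n(x)$ (using $s_i \in [m_2^{1/2}/4, m_2^{1/2}]$). The sharper bound $\P[\zeta^{(x)}_{s_i}\notin \mathscr{G}_{m_2}]\le \tfrac12 e^{-m_2^{1/3}}$ extracted from the proof of Proposition~\ref{prop:main}, combined with $m_2 = n - O(n^{1/4})$ (so that $n^{1/3} - m_2^{1/3} \to 0$), gives $\P[\zeta^{(x)}_{s_i}\notin \mathscr{G}_{m_2}]\le e^{-n^{1/3}}$ for $n$ large. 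By independence, the loss count $Y := \#\{x\in G : \zeta^{(x)}_{s_i}\notin \mathscr{G}_{m_2}\}$ is stochastically dominated by $\mathrm{Binomial}(h_4(n), e^{-n^{1/3}})$, and since $\Gamma(\xi_{t_{i+1}})\ge k-Y$ this reproduces the masses $P(k,k-a)$ for $a>0$ in the kernel.

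For the gain (when $k<h_4(n)$), fix $x_0\in G$ and $x^*\in \L_n(m_1)\setminus G$ and lower-bound $\P[Y=0,\;\xi_{t_{i+1}}\cap \T_n(x^*)\in \mathscr{G}_{m_2}]$. Modeled on the proof of \eqref{eq:prop4}, decompose this event into three pieces: (a) $\zeta^{(x_0)}$ visits $x_0$ at some $T_0\le m_2^{1/2}/8$---by Proposition~\ref{prop:main}(\ref{eq:prop1}) this is bundled with $\{\zeta^{(x_0)}_{s_i}\in\mathscr{G}_{m_2}\}$, so it costs essentially nothing on $\{Y=0\}$; (b) an infection path $(x_0,\,t_i+T_0)\;\lra\; o\;\lra\;(x^*,\,t_i+T_0+2\ell m_1)$ lying in the upper region $\T_n\setminus\bigcup_{x\in\L_n(m_1)}(\T_n(x)\setminus\{x\})$, of probability $\ge \bar c^{\,2}\theta^{2m_1}$ by two applications of Lemma~\ref{lem:ss2}; (c) from the planted infection at $x^*$, the sub-process inside $\T_n(x^*)$ is in $\mathscr{G}_{m_2}$ at time $t_{i+1}$, with probability $\ge \sigma/2$ by Proposition~\ref{prop:main}(\ref{eq:prop3}), the residual time belonging to $[m_2^{1/2}/2, m_2^{1/2}]$ for $n$ large since $2\ell m_1=o(m_2^{1/2})$. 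Sub-events (b) and (c) depend on portions of the graphical construction disjoint from all $\T_n(x)$, $x\in G$, hence are independent of $\{Y=0\}$; multiplying the contributions and absorbing harmless factors into $\bar c$ gives the required $p^{(n)}(0)\cdot(\bar c\sigma/2)\theta^{2m_1}$, matching $P(k,k+1)$. Assembling the loss and gain estimates yields one-step dominance, and iterating with the Markov property finishes the proof. The main obstacle is the independence bookkeeping for the gain: ensuring that sub-event (a) imposes no additional cost on $\{Y=0\}$---made possible by the bundled form of Proposition~\ref{prop:main}(\ref{eq:prop1})---and that sub-events (b) and (c) live in graphical regions geometrically disjoint from the loss-determining subtrees, which is precisely the reason for working with the decomposition $\T_n = \bigcup_{x\in\L_n(m_1)} \T_n(x)$ in the first place.
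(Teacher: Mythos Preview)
Your approach is essentially the same as the paper's: bound the loss count by a binomial via the independent subtree processes, and on the no-loss event run a single gain attempt through the upper region $B(o,m_1)$ into a fresh subtree. The paper packages things slightly more cleanly by folding the ``visit $x$ before time $m_2^{1/2}/8$'' condition directly into its success count $V$, so that on $\{V=a\}$ the seed time $s^*$ is available for free; your event-(a) bookkeeping arrives at the same place but with an extra line of justification.

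One small repair is needed in your gain estimate. Splitting the transport path into two legs $(x_0,\cdot)\to(o,\cdot)\to(x^*,\cdot)$ and applying Lemma~\ref{lem:ss2} twice produces $\bar c^{\,2}\theta^{2m_1}$, which is strictly smaller than the $\bar c\,\theta^{2m_1}$ hard-wired into $P(k,k+1)$; since $\bar c$ was fixed once and for all in the summary of constants, you cannot ``absorb'' the missing factor. The fix is trivial: apply Lemma~\ref{lem:ss2} once with the pair $(x_0,x^*)$ and total distance $\mathsf{dist}(x_0,x^*)\le 2m_1$, exactly as the paper does, to recover the required $\bar c\,\theta^{2m_1}$. (Also, your claimed residual-time window for step (c) is $[m_2^{1/2}/2,\,m_2^{1/2}]$, but with $s_i$ as small as $m_2^{1/2}/2$ and $T_0$ as large as $m_2^{1/2}/8$ the residual can drop to roughly $3m_2^{1/2}/8$; the paper records only $t-s^{**}\ge m_2^{1/2}/4$, which is what \eqref{eq:prop3} actually needs given that its proof goes through \eqref{eq:prop1}.)
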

\begin{proof}
Fix an initial configuration $\xi_0$ and ${t} \in [m_2^{1/2}/2,\;m_2^{1/2}]$. 
Let $a = \Gamma(\xi_0)$; we will for now assume that $1\leq a < h_4(n)$. Define, for $x \in \L_{n}(m_1)$ and $s \ge 0$,
$$\eta[x]_s = \left\{y \in \T_n(x): \xi_0 \cap \T_n(x)\times\{0\} \;\lra\;(y,s) \text{ inside }\T_n(x) \right\}.$$
Obviously, $\eta[x]_s$ is simply a contact process on $\T_n(x)$ with initial configuration $\xi_0 \cap \T_n(x)$. As usual, we will abuse notation and treat $\eta[x]_s$ as an element of $\{0,1\}^{\T_n(x)}$. 

Let
$$V = \left|\left\{x \in \L_{n}(m_1): \xi_0 \cap \T_n(x) \in \mathscr{G}_{m_2};\;(\eta[x]_s)(x) = 1 \text{ for some }s \leq m_2^{1/2}/8;\;\eta[x]_t \in \mathscr{G}_{m_2} \right\}\right|.$$
Using (\ref{eq:prop1}) and the fact that the processes $\{(\eta[x]_s)_{0 \leq s \leq t}: x\in \L_{n}(m_1)\}$ are independent, we see that $V$ is stochastically larger than a Binomial($a,\;1-\exp(-n^{1/3})$) random variable. This implies that $V$ is stochastically larger than $a - X$, where $X$ is a random variable with Binomial($h_4(n),e^{-n^{1/3}}$) distribution.

On the event $\{V = a\}$, we can choose $x^* \in \L_{n}(m_1)$, $s^* \leq m_2^{1/2}/8$ such that $\left(\eta[x^*]_{s^*}\right)(x^*) = 1$, and also choose $y^*$ with $\xi_0(\T_n(y^*)) \notin \mathscr{G}_{m_2}$. Given these choices, define $s^{**} = s^* + \ell \cdot \mathsf{dist}(x^*, y^*)$. Then let

$$\begin{aligned}&\eta[y^*]_s = \left\{y \in \T_n(y^*): (y^*,s^{**})\;\lra\;(y,s) \text{ inside } \T_n(y^*)\right\},\qquad s^{**}\leq s \leq t\;\\
&E = \{V=a;\;(x^*,s^*) \;\lra\; (y^*,s^{**}) \text{ inside } \T_{n,m_1};\;\eta[y^*]_t \in \mathscr{G}_{m_2}\}.
\end{aligned}$$

Since $0 \leq s^* \leq m_2^{1/2}/8$, we have $0 \leq s^{**} \leq m_2^{1/2}/8 + \ell \cdot \mathsf{dist}(x^*, y^*) \leq m_2^{1/2}/8 + 2\ell m_1 \leq m_2^{1/2}/4$, then $m_2^{1/2}/4 \leq t - s^{**} \leq m_2^{1/2}$. By Lemma \ref{lem:ss2} and (\ref{eq:prop3}), we get
$$\P[E\;|\;V=a] \geq (\bar c \theta^{2m_1})\cdot (\sigma/2).$$
Since $\Gamma(\xi
_t) \geq V + I_E$, this completes the proof in the case $1 \leq a  <  h_4(n)$. The case $a = h_4(n)$ is the same, except that we only use $\Gamma(\xi_t) \geq V$, and the case $a \leq 0$ is trivial, so the proof is complete.
\end{proof}

\begin{lemma}\label{lem:rwest}
If $n$ is large enough, then 
\begin{align}
&\P\Ll[\left.Z^{(n)}_i \leq (3/4)h_4(n) \;\right|\; Z^{(n)}_0 = h_4(n)\Rr] \leq e^{-h_4(n)} \quad \forall i \in [0,\; e^{h_5(n)}];\label{eq:rwest1}\\
&\P\Ll[\left.Z^{(n)}_i \leq (3/4)h_4(n) \;\right|\; Z^{(n)}_0 = a\Rr]  \leq e^{-a} \quad \forall a \in \{1, \ldots, h_4(n)\},\ i \in [e^{h_{10}(n)},\; e^{h_5(n)}].\label{eq:rwest2}
\end{align}
\end{lemma}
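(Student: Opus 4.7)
The strategy for both bounds is to track the shortfall $M_j := h_4(n) - Z^{(n)}_j$. This is a random walk on $\{0, 1, \ldots, h_4(n)\}$ with jumps $\P[J = -1] = c_0 := p^{(n)}(0)\bar c \sigma \theta^{2m_1}/2$, $\P[J = 0] = p^{(n)}(0) - c_0$, $\P[J = a] = p^{(n)}(a)$ ($a \geq 1$); it is reflected at $0$ (corresponding to $Z^{(n)}_j = h_4(n)$) and absorbed at $h_4(n)$ (corresponding to absorption of $Z^{(n)}$ at $0$). By \reff{eq:defu} and $m_1 = \lfloor n^{1/4}\rfloor$, the ratio $c_0/\E[X]$ with $X \sim \mathrm{Bin}(h_4(n), e^{-n^{1/3}})$ is of order $e^{n^{1/3}}$, and the Lundberg-type equation $\E[e^{\lambda J}] = (1+(e^\lambda-1)e^{-n^{1/3}})^{h_4(n)} - c_0(1 - e^{-\lambda}) = 1$ admits a positive solution $\lambda^* \sim n^{1/3}$ as $n \to \infty$; thus $(e^{\lambda^* M_j})$ is a martingale for $M \in (0, h_4(n))$.

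To prove \reff{eq:rwest1}, I would decompose the trajectory starting from $M_0 = 0$ into excursions away from $0$. The expected number of excursions up to time $i \leq e^{h_5(n)}$ is bounded by $i\,\P[X \geq 1] \leq e^{h_5(n)}h_4(n)e^{-n^{1/3}}$. Optional stopping of $(e^{\lambda^* M_j})$ inside a single excursion (stopped at the first exit from $(0, h_4(n)/4]$) shows that the excursion reaches height $h_4(n)/4$ with probability at most $e^{-\lambda^* h_4(n)/5}$, after absorbing the $O(e^{\lambda^*})$ factor coming from the initial up-jump and the overshoot. Since $\lambda^* h_4(n)/5 \sim n^{1/3}h_4(n)/5$ vastly dominates $h_5(n) + m_1\log d$ by virtue of the hierarchy $h_k(n) = d^{\lfloor n^{1/k}\rfloor}$, a union bound over excursions yields \reff{eq:rwest1}.

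For \reff{eq:rwest2}, fix $a \in \{1, \ldots, h_4(n)\}$ (so $M_0 = h_4(n) - a$) and $i \in [e^{h_{10}(n)}, e^{h_5(n)}]$, and split $\P[M_i \geq h_4(n)/4] \leq A_1 + A_2 + A_3$, where $A_1$ is the probability of absorption of $Z^{(n)}$ by time $i$, $A_2 = \P[\tau_0 > i/2]$ with $\tau_0 := \inf\{j: M_j = 0\}$, and $A_3 = \P[M_i \geq h_4(n)/4 \mid \tau_0 \leq i/2]$. Optional stopping of $(e^{\lambda^* M_j})$ at $\{0, h_4(n)\}$ with initial value $h_4(n) - a$ gives that the initial excursion ends at $h_4(n)$ with probability at most $e^{-\lambda^* a}$, and the contribution of subsequent excursions is controlled exactly as in \reff{eq:rwest1}; hence $A_1 \leq e^{-a}$ for $n$ large. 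The strong Markov property at $\tau_0$ combined with \reff{eq:rwest1} gives $A_3 \leq e^{-h_4(n)}$. For $A_2$, the negative-drift estimate $\E[e^J] \leq 1 - c\theta^{2m_1}$ (some $c > 0$, a direct consequence of $c_0 \gg \E[X]$), coupled with comparison to the unreflected walk $\tilde M$ which coincides with $M$ until $M$ first hits $0$, yields $A_2 \leq \P[\tilde M_{i/2} > 0] \leq e^{M_0}(\E[e^J])^{i/2} \leq e^{h_4(n) - c\theta^{2m_1} i/2}$, which is doubly-exponentially small because $\theta^{2m_1} \cdot e^{h_{10}(n)} \gg h_4(n)$. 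Summing, $\P[Z^{(n)}_i \leq (3/4)h_4(n)] \leq e^{-a}$.

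The main obstacle is locating the Lundberg exponent $\lambda^* \sim n^{1/3}$ accurately enough to overcome the factor $e^{h_5(n)}$ coming from the time horizon; the excursion decomposition is then the key device for sidestepping the failure of the exponential supermartingale property at the reflecting boundary $M = 0$, which would otherwise accumulate unacceptable error at each visit to $0$ and invalidate a direct global martingale argument.
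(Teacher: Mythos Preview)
Your proposal is correct and follows essentially the same route as the paper's proof: an exponential (super)martingale yields the hitting-probability bound (the paper phrases this via a maximum-principle argument after a Poisson domination of the Binomial, using the slightly weaker exponent $n^{2/7}$ in place of your Lundberg $\lambda^{*}\sim n^{1/3}$), an excursion decomposition from the top state handles \eqref{eq:rwest1}, and a fast-return-to-the-top argument handles \eqref{eq:rwest2}. One minor point: your coupling $M=\tilde{M}$ is valid only until $M$ first hits $0$ \emph{or} the absorbing barrier $h_4(n)$, so $A_2$ should be taken on the non-absorbed event (the absorbed part is already accounted for by $A_1$); this is a harmless adjustment.
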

The proof of this lemma is deferred to the appendix.

Now let us define $\mathscr{H}_n = \{A \subset {\T_n}: \Gamma(A) > (3/4)  d^{m_1}\}.$
\begin{lemma}\label{lem:Hprop}
For $n$ large enough,
\begin{eqnarray} 
&&\label{eq:HH}\P\Ll[\xi^A_t \in \mathscr{H}_n\Rr] \geq 1 - e^{-\frac{3}{4}h_4(n)} \quad \forall A \in \mathscr{H}_n,\; t \in [m_2^{1/2},\;e^{h_5(n)}];\\
&&\label{eq:Hlong}\P\Ll[\xi^A_t \neq \varnothing,\;\xi^A_t \notin \mathscr{H}_n\Rr] \leq e^{-h_5(n)},\quad \forall A \subset {\T_n},\; t \in [e^{h_9(n)},\;e^{h_5(n)}];\\
&&\label{eq:Hxi}\P\Ll[\xi^A_{m_2^{1/2}} \cap A' = \varnothing\Rr] \leq e^{-h_4(n)/4n},\quad \forall A, A'\in \mathscr{H}_n.
\end{eqnarray}
\end{lemma}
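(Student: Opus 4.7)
All three estimates rest on the coupling of $\Gamma(\xi^A_\cdot)$ with the Markov chain $Z^{(n)}$ from Lemma~\ref{lem:compRW}, combined with the random walk tail bounds of Lemma~\ref{lem:rwest} and, for (\ref{eq:Hxi}), the intersection bound (\ref{eq:prop4}) of Proposition~\ref{prop:main}.

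For (\ref{eq:HH}), the plan is to fix $A\in\mathscr{H}_n$ and $t\in[m_2^{1/2},e^{h_5(n)}]$, set $N=\lfloor 2t/m_2^{1/2}\rfloor$ and $t_i=it/N$, so that $t_{i+1}-t_i\in[m_2^{1/2}/2,m_2^{1/2}]$. By Lemma~\ref{lem:compRW}, $\Gamma(\xi^A_{t_N})$ stochastically dominates $Z^{(n)}_N$ started at $a=\Gamma(A)>\tfrac34 h_4(n)$, so the task reduces to showing $\P[Z^{(n)}_N\le\tfrac34 h_4(n)\mid Z^{(n)}_0=a]\le e^{-\frac{3}{4}h_4(n)}$. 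When $N\ge e^{h_{10}(n)}$, Lemma~\ref{lem:rwest}(ii) delivers this via $e^{-a}<e^{-\frac{3}{4}h_4(n)}$. When $N<e^{h_{10}(n)}$, I would appeal to Lemma~\ref{lem:rwest}(i): its proof (deferred to the appendix) is a drift/martingale argument that should adapt without essential change to any starting state $a>\tfrac34 h_4(n)$, still yielding a bound of order $e^{-h_4(n)}$.

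For (\ref{eq:Hlong}), coupling from a positive state is not directly available since $A$ may be arbitrary. Instead, on $\{\xi^A_t\neq\varnothing\}$ I would identify an intermediate time $s^*\in[t-e^{h_9(n)},\,t-e^{h_{10}(n)}-m_2^{1/2}]$ with $\Gamma(\xi^A_{s^*})\ge h_5(n)$, outside of an event of probability $\le\tfrac12 e^{-h_5(n)}$. Survival past $s^*$ together with the Salzano--Schonmann spreading estimate (Lemma~\ref{lem:ss1}(ii)) applied inside many subtrees should force at least $h_5(n)$ subtrees rooted at $\L_n(m_1)$ to have their restriction in $\mathscr{G}_{m_2}$ at time $s^*$, using (\ref{eq:goodstart2}) to recognise membership in $\mathscr{G}_{m_2}$. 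Conditional on $\xi^A_{s^*}$, Lemma~\ref{lem:compRW} over $[s^*,t]$ (which contains $\ge e^{h_{10}(n)}$ steps of length $m_2^{1/2}$) combined with Lemma~\ref{lem:rwest}(ii) yields $\P[\xi^A_t\notin\mathscr{H}_n\mid\xi^A_{s^*}]\le e^{-h_5(n)}$.

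For (\ref{eq:Hxi}), write $X_B:=\{x\in\L_n(m_1):B\cap\T_n(x)\in\mathscr{G}_{m_2}\}$ for $B\in\{A,A'\}$, so $|X_A|,|X_{A'}|>\tfrac34 h_4(n)$ and hence $|X_A\cap X_{A'}|>\tfrac12 h_4(n)$. For each $x\in X_A\cap X_{A'}$, the restricted contact processes $(\xi^{A\cap\T_n(x)}_{\T_n(x),s})_{s\ge0}$ are mutually independent across $x$ (they use disjoint Harris systems), and Proposition~\ref{prop:main}(\ref{eq:prop4}) applied inside $\T_n(x)$ (a tree of height $m_2$) with $A\cap\T_n(x),A'\cap\T_n(x)\in\mathscr{G}_{m_2}$ gives $\P[\xi^{A\cap\T_n(x)}_{\T_n(x),m_2^{1/2}}\cap A'\neq\varnothing]>m_2^{-1}$. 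By monotonicity $\xi^A_{m_2^{1/2}}\supseteq\xi^{A\cap\T_n(x)}_{\T_n(x),m_2^{1/2}}$, so the failures across $x\in X_A\cap X_{A'}$ are independent and
\[
\P[\xi^A_{m_2^{1/2}}\cap A'=\varnothing]\le(1-m_2^{-1})^{h_4(n)/2}\le e^{-h_4(n)/(2m_2)}\le e^{-h_4(n)/(4n)}.
\]

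The hard part will be the intermediate-time estimate in (\ref{eq:Hlong}): quantifying, for an entirely arbitrary $A$, how non-extinction at time $t$ forces $\Gamma(\xi^A_{s^*})\ge h_5(n)$ at some $s^*<t$ with overwhelming probability.
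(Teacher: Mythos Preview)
Your arguments for (\ref{eq:HH}) and (\ref{eq:Hxi}) are essentially the paper's: discretize time and compare $\Gamma(\xi^A_{\cdot})$ with $Z^{(n)}$ via Lemmas~\ref{lem:compRW} and~\ref{lem:rwest} for the first, and use pigeonhole on the $\mathscr{G}_{m_2}$-good subtrees together with (\ref{eq:prop4}) and independence across subtrees for the second.

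The gap is in (\ref{eq:Hlong}), which you yourself flag as the hard part. Your proposed justification --- ``survival past $s^*$ together with Lemma~\ref{lem:ss1}(ii) applied inside many subtrees should force $\Gamma(\xi^A_{s^*})\ge h_5(n)$'' --- does not work as stated: survival beyond $s^*$ tells you nothing about the state at $s^*$, and a single application of Lemma~\ref{lem:ss1}(ii) succeeds only with probability $\sigma$, nowhere near the $1-\tfrac12 e^{-h_5(n)}$ you need. The missing idea is the repeated-attempts mechanism of Lemma~\ref{lem:Rep}. The paper first establishes (\ref{eq:condTrans}): from \emph{any} nonempty configuration, within time $t_0:=\ell n+\tfrac{Sm_1}{2K}+m_2^{1/2}$ one reaches $\{\Gamma>\tfrac{\sigma}{4}\bar d^{m_1}\}$ with probability at least $\tfrac{\bar c\sigma}{2}\theta^n$ (a single particle walks to $o$ via Lemma~\ref{lem:ss2}, then spreads to $\bar d^{m_1}$ vertices of $\L_n(m_1)$ via Lemma~\ref{lem:ss1}(ii), then (\ref{eq:prop3}) upgrades each such subtree to $\mathscr{G}_{m_2}$). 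Setting $\upkappa:=\inf\{s:\Gamma(\xi^A_s)>\tfrac{\sigma}{4}\bar d^{m_1}\}$, one splits $\P[\xi^A_t\neq\varnothing,\,\xi^A_t\notin\mathscr{H}_n]$ into $\P[\xi^A_{e^{h_9(n)}}\neq\varnothing,\,\upkappa>e^{h_9(n)}]$ plus a term controlled on $\{\upkappa\le e^{h_9(n)}\}$. For the first, non-extinction up to $e^{h_9(n)}$ forces $\mathcal{N}^A_{\T_n,t_0}(e^{h_9(n)})\ge\lfloor e^{h_9(n)}/t_0\rfloor-1$, so Lemma~\ref{lem:Rep} combined with (\ref{eq:condTrans}) bounds it by $\exp\big(-\tfrac{\bar c\sigma}{2}\theta^n(\lfloor e^{h_9(n)}/t_0\rfloor-1)\big)\ll e^{-h_5(n)}$. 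The second term is then handled exactly as you propose, via the chain coupling and (\ref{eq:rwest2}) started from $a=\tfrac{\sigma}{4}\bar d^{m_1}$. So your decomposition was right; what was missing was the quantified ``each nonempty state gets one $\theta^n$-chance per $t_0$ units of time'' input that turns long survival into a hitting-time bound for $\upkappa$.
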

\begin{proof}
For (\ref{eq:HH}), let $i_0$ be the largest integer such that $i_0\cdot m_2^{1/2} \leq t - m_2^{1/2}/2$. By Lemma \ref{lem:compRW}, $\left(\Gamma\left(\xi^{\underline{1}}_0\right),\; \Gamma\left(\xi^{\underline{1}}_{m_2^{1/2}}\right),\ldots,\;\Gamma\left(\xi^{\underline{1}}_{i_0 \cdot m_2^{1/2}}\right),\; \Gamma\left(\xi^{\underline{1}}_{t}\right)\right)$ is stochastically larger than $\left(Z^{(n)}_0, \ldots,\;Z^{(n)}_{i_0+1}\right) $ with $Z^{(n)}_0 \ge \frac{3}{4}h_4(n)$. Since $i_0+1 \leq 2\frac{\exp(h_5(n))}{m_2^{1/2}} \leq \exp(h_5(n))$, the result follows from (\ref{eq:rwest1}).

The same argument using (\ref{eq:rwest2}) gives
\begin{equation}
\P\Ll[\xi^A_t \in \mathscr{H}_n\Rr] > 1-e^{-a} \quad \forall A \text{ with } \Gamma(A) = a,\; t \in [e^{h_9(n)},\; e^{h_5(n)}].\label{eq:condTransp}
\end{equation}
For (\ref{eq:Hlong}), we will need (\ref{eq:condTransp}) and 
\begin{equation}\label{eq:condTrans}\P\left[\exists s \le \ell n + \frac{Sm_1}{2K} + m_2^{1/2}:\; \Gamma\left(\xi^{A}_{s}\right)  > \frac{\sigma}{4}\;\bar d^{m_1}\right]>\frac{\bar{c}\sigma}{2}\theta^{n}, \qquad \forall A \subset {\T_n},\; A \neq \varnothing. 
\end{equation}
Let us prove (\ref{eq:condTrans}). Choose $x \in A$; by Lemma \ref{lem:ss2}, with probability larger than $\bar{c}\theta^n$, we have $(x,0) \;\leftrightarrow\;(o, s_1)$ for some $s_1 \le \ell n$. Conditioned on this event, by Lemma \ref{lem:ss1}(ii.), with probability larger than $\sigma$ we have $|\xi^{A}_{s_1 + (S/2K)m_1} \cap \mathbb{L}_{n}(m_1)| >\bar d^{m_1}$. Also conditioning on this event, by (\ref{eq:prop3}), the number of $y \in \mathbb{L}_{n}(m_1)$ such that $\xi^{A}_{s_1 + \frac{Sm_1}{2K} + m_2^{1/2}} \cap \T_n(y) \in \mathscr{G}_{m_2}$ stochastically dominates a Binomial$(\bar d^{m_1},\;\sigma/2)$ random variable. If $n$ is large enough, such a random variable is larger than $(\sigma/4)\bar d^{m_1}$ with probability larger than $1/2$. This proves (\ref{eq:condTrans}).

We now turn to (\ref{eq:Hlong}). Let $\upkappa = \inf\left\{s: \Gamma(\xi^{A}_s) > \frac{\sigma}{4}\bar d^{m_1}\right\}$. For $ t \in [e^{h_8(n)},\;e^{h_5(n)}]$ we have
\begin{equation}\label{eq:auxFim}
\begin{split}\P\big[\;\xi^{A}_{t} \neq \varnothing,\;\xi^A_t \notin \mathscr{H}_n \;\big] \leq \P\Ll[\;\xi^{A}_{e^{h_9(n)}} \neq \varnothing,\;\upkappa  > e^{h_9(n)}\;\Rr] + \P\big[\;\xi^A_t \notin \mathscr{H}_n \;|\; \upkappa \leq e^{h_9(n)} \;\big].
\end{split}\end{equation}
The second term on the right-hand side is less than $\exp\left(-\frac{\sigma}{4}\bar d^{m_1} \right)<\frac{1}{2}\;e^{-h_5(n)}$ by (\ref{eq:condTransp}). Let us show that the first term on the right-hand side of (\ref{eq:auxFim}) is also smaller than $\frac{1}{2}\;e^{-h_5(n)}$. We use Lemma \ref{lem:Rep} with the following choice of parameters:
$$\A = \T_n,\quad t_0 = \ell n + \frac{S}{2K}m_1 + m_2^{1/2},\quad E = \left\{A: \Gamma(A) > \frac{\sigma}{4}\bar d^{m_1}\right\},\quad \upepsilon_0 = \frac{\bar{c}\sigma}{2} \theta^n.$$
With this choice, (\ref{eq:cond1Rep}) is exactly (\ref{eq:condTrans}), and $\{\xi_s \neq \varnothing\} \subset \{\mathcal{N}_{\T_n, t_0}(s) \geq \lfloor s/t_0 \rfloor - 1\}$ for any $s$. We then have, by (\ref{eq:repCon}),
$$\begin{aligned}\P\left[\xi_{e^{h_9(n)}} \neq \varnothing,\; \upkappa > e^{h_9(n)} \right] &\leq \P\left[\mathcal{N}_{\T_n, t_0}(e^{h_9(n)}) \ge \lfloor e^{h_9(n)}/t_0 \rfloor - 1,\; \upkappa > e^{h_9(n)} \right] \\
&\leq\exp\left(-\frac{\bar c\sigma}{2}\cdot \theta^{{n}}\cdot \left(\left \lfloor \frac{\exp(h_9(n))}{t_0}\right \rfloor -1\right) \right),\end{aligned}$$
which is of course much smaller than $e^{-h_5(n)}/2$. We have thus proved that the right-hand side of (\ref{eq:auxFim}) is smaller than $e^{-h_5(n)}$.

Finally, let us prove (\ref{eq:Hxi}). By the definition of $\Gamma$, the hypothesis gives
$$\left|\{x \in \mathbb{L}_{n}(m_1): A \cap \T_n(x) \in \mathcal{G}_{m_2},\;A' \cap \T_n(x) \in \mathcal{G}_{m_2}\} \right| \geq h_4(n)/4.$$
The result then follows from (\ref{eq:prop4}).
\end{proof}

\begin{corollary}\label{prop:allInf}
For $n$ large enough, ${\displaystyle \P\Ll[\xi^{\underline{1}}_{\exp(h_5(n))} = \varnothing\Rr] \leq e^{-h_5(n)}}$.
\end{corollary}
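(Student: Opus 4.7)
The plan is to apply \eqref{eq:HH} directly. Observe that $\mathscr{H}_n$ excludes the empty configuration, since $\Gamma(\varnothing)=0<(3/4)h_4(n)$. Hence
$$\P\Ll[\xi^{\underline{1}}_{\exp(h_5(n))}=\varnothing\Rr]\le \P\Ll[\xi^{\underline{1}}_{\exp(h_5(n))}\notin \mathscr{H}_n\Rr],$$
and so it suffices to show that $\underline{1}\in\mathscr{H}_n$ and then invoke \eqref{eq:HH} with $A=\underline{1}$ and $t=\exp(h_5(n))$ (noting that for large $n$ we have $m_2^{1/2}\le e^{h_5(n)}$, so $t$ falls in the allowed interval).

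To verify $\underline{1}\in\mathscr{H}_n$, I would check that for every $x\in\L_n(m_1)$, the fully-infected configuration restricted to $\T_n(x)$ belongs to $\mathscr{G}_{m_2}$, which immediately yields $\Gamma(\underline{1})=d^{m_1}=h_4(n)>(3/4)h_4(n)$. Viewing $\T_n(x)$ as a tree of height $m_2$ and writing its decomposition as $m_2=n_1'+n_2'$ in the sense of \eqref{eq:decomp}, the fully-infected configuration satisfies $|\underline{1}\cap \L_{m_2}(n_1')|=d^{n_1'}>\bar d^{n_1'}$ since $d>\bar d=2d/3$. By the inclusion \eqref{eq:goodstart2} (applied at scale $m_2$), this places $\underline{1}\cap\T_n(x)$ in $\mathscr{G}_{m_2}$, as required.

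Combining these two observations, \eqref{eq:HH} gives
$$\P\Ll[\xi^{\underline{1}}_{\exp(h_5(n))}=\varnothing\Rr]\le e^{-\frac{3}{4}h_4(n)}.$$
Since $h_4(n)=d^{\lfloor n^{1/4}\rfloor}$ grows much faster than $h_5(n)=d^{\lfloor n^{1/5}\rfloor}$, for $n$ large we have $\tfrac{3}{4}h_4(n)\ge h_5(n)$, which yields the stated bound.

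There is no real obstacle here: the corollary is essentially a repackaging of \eqref{eq:HH} once one recognizes that the fully-infected initial condition trivially lies in $\mathscr{H}_n$, the genuinely substantive work having been carried out in the proof of Lemma~\ref{lem:Hprop} (which itself relies on the Markov-chain domination of Lemma~\ref{lem:compRW} and the random-walk estimate \eqref{eq:rwest1}).
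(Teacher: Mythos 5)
Your proof is correct and takes essentially the same approach as the paper: show $\underline{1}\in\mathscr{H}_n$, apply \eqref{eq:HH} with $t=e^{h_5(n)}$, and use that $\mathscr{H}_n$ excludes $\varnothing$ together with $\tfrac34 h_4(n)\ge h_5(n)$. The only cosmetic difference is that the paper justifies $\underline{1}\in\mathscr{H}_n$ by noting $\mathscr{H}_n$ is non-empty and increasing, whereas you verify it directly via \eqref{eq:goodstart2}; both are valid.
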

\begin{proof}
The fully infected configuration is in $\mathscr{H}_n$ (since $\mathscr{H}_n$ is non-empty and increasing), so the statement follows directly from (\ref{eq:HH}).
\end{proof}

\begin{corollary}
\label{prop:coupling}
For $n$ large enough,
$$\P\left[\exists x \in \T_n: \xi^{x}_{\exp(h_6(n))} \neq  \varnothing,\; \xi^{x}_{\exp(h_6(n))} \neq \xi^{\underline{1}}_{\exp(h_6(n))} \right] \leq e^{-d^{(n^2)}}.$$
\end{corollary}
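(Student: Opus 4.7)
Let $T := \exp(h_6(n))$. I would fix a pair $(x,y) \in \T_n \times \T_n$ and bound the per-pair bad event $B_{x,y} := \{\xi^x_T \neq \varnothing,\, \xi^{\underline 1}_T(y)=1,\, \xi^x_T(y)=0\}$ by a doubly-exponentially small quantity, then take a union bound over the $\le d^{2(n+1)}$ pairs in $\T_n \times \T_n$. The geometric reduction is the following: any time $s \in [0,T]$ with $\xi^x_s \cap \hat\xi^{(y,T)}_{T-s} \neq \varnothing$ rules out $B_{x,y}$, because any common vertex $w$ splices an infection path $(x,0)\lra(w,s)$ with $(w,s)\lra(y,T)$ to give $(x,0)\lra(y,T)$, hence $\xi^x_T(y)=1$.

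To exploit \reff{eq:Hxi}, set $s_0 := \exp(h_9(n))$ and $s_k := s_0 + k \cdot m_2^{1/2}$ for $k=1,\dots,K := \lfloor T/(4 m_2^{1/2})\rfloor$, and freeze the target set $A' := \hat\xi^{(y,T)}_{T-s_K}$, which depends only on the graphical construction in $[s_K, T]$. For each $k$, call the ``$k$-th attempt'' the event $\xi^x_{s_k} \cap A' \neq \varnothing$. On the event that $\xi^x_{s_{k-1}}, A' \in \mathscr{H}_n$, \reff{eq:Hxi} applied to the disjoint slice $[s_{k-1}, s_k]$ of the graphical construction bounds per-attempt failure by $e^{-h_4(n)/(4n)}$; distinct attempts are conditionally independent by the Markov property and the disjointness of slices. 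Hence the joint failure probability, on the ``all-$\mathscr{H}_n$'' event, is at most $\exp(-K h_4(n)/(4n))$. Since $K \asymp e^{h_6(n)}/n$ and $h_6(n) = d^{\lfloor n^{1/6}\rfloor}$ exceeds $n^2 \log d$ by an exponential margin for large $n$, this quantity dominates $e^{-d^{n^2}}$ with plenty of room to spare.

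The conditioning on ``$\xi^x_{s_k} \in \mathscr{H}_n$ for all $k\le K$'' and ``$A' \in \mathscr{H}_n$'' would be discharged as follows: initial entry is provided by \reff{eq:Hlong} at cost $e^{-h_5(n)}$, and maintenance would rely on the random-walk domination of Lemma \ref{lem:compRW} together with a barrier-crossing estimate for $Z^{(n)}$ that leverages the positive drift revealed by Lemma \ref{lem:rwest}, yielding a ``never leave $\mathscr{H}_n$'' probability of order $e^{-\Theta(h_4(n))}$ that is independent of the horizon $K$. The analogous statement for $A'$ follows from the dual being itself a contact process on $\T_n$. A final union bound over the $\le d^{2(n+1)}$ pairs $(x,y)$ absorbs the polynomial pre-factor against the doubly-exponential tail and yields the claimed bound.

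I expect the main technical hurdle to be precisely the ``stay-in-$\mathscr{H}_n$'' estimate. A naive iteration of \reff{eq:HH} over the $K$ sampling times gives only $K \cdot e^{-(3/4) h_4(n)} \sim e^{-h_4(n)/2}$, which falls far short of $e^{-d^{n^2}}$ since $h_4(n) = d^{\lfloor n^{1/4}\rfloor}$ is much smaller than $d^{n^2}$. Closing this gap will require a crossing-probability argument for the dominating random walk $Z^{(n)}$ that exploits the positive drift toward the upper barrier $h_4(n)$ to produce a bound independent of the length of the trajectory, rather than a union bound over its time-steps; the abundance of disjoint attempts inherited from $T = \exp(h_6(n)) \gg m_2^{1/2}$ then turns this into the required double-exponential tail.
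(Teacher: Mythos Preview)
Your proposal has a genuine gap, and it is precisely the one you flag in your final paragraph: the ``stay-in-$\mathscr{H}_n$'' estimate cannot be pushed to the precision you need. No crossing-probability argument for $Z^{(n)}$ can produce a horizon-independent bound, because the chain has only $h_4(n)=d^{\lfloor n^{1/4}\rfloor}$ states above the absorbing region, and over a time horizon as long as $e^{h_6(n)}$ the chain will leave $\mathscr{H}_n$ with probability close to~$1$; the very best one could extract from the drift is of order $e^{-c\,h_4(n)\,n^{2/7}}$ (cf.\ the proof of Lemma~\ref{lem:rwest}). Thus your final bound is at best $e^{-C h_4(n)}+\exp(-K h_4(n)/(4n))$, dominated by the first term, and $e^{-C h_4(n)}=e^{-C d^{\lfloor n^{1/4}\rfloor}}$ is hopelessly far from $e^{-d^{n^2}}$. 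There is also a secondary issue: with $A'=\hat\xi^{(y,T)}_{T-s_K}$ frozen at time $s_K$, the event $\{\xi^x_{s_k}\cap A'\neq\varnothing\}$ for $k<K$ does \emph{not} produce a path $(x,0)\lra(y,T)$, so ``success'' at early attempts does not rule out $B_{x,y}$; your product bound over attempts is therefore not justified as written.

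The paper avoids both problems by a different mechanism: it first proves only the modest bound
\[
\P\!\left[\xi^{A}_{\exp(h_7(n))} \neq \varnothing,\ \xi^{A}_{\exp(h_7(n))} \neq \xi^{\underline{1}}_{\exp(h_7(n))}\right] \le e^{-h_7(n)}
\]
(one application of \eqref{eq:Hlong} and one of \eqref{eq:Hxi}, so ``stay in $\mathscr{H}_n$'' is only needed over the much shorter horizon $e^{h_7(n)}$), and then \emph{iterates on the coupling event itself}. The point is that coupling is absorbing: once $\xi^A_t=\xi^{\underline 1}_t$, equality persists forever, and if $\xi^B$ couples with a \emph{freshly restarted} $\xi^{\underline 1}$ on a block then by monotonicity it equals the original $\xi^{\underline 1}$ too. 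Hence the events ``survived but not yet coupled after $k$ blocks'' are nested, the Markov property gives a clean product, and one obtains $(e^{-h_7(n)})^{e^{h_6(n)}/e^{h_7(n)}}\le e^{-d^{n^2}}$. This iteration on the absorbing coupling event---rather than on non-nested intersection events under a global ``stay-in-$\mathscr{H}_n$'' conditioning---is the idea your argument is missing.
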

\begin{proof}
We first prove that, for any $A \subset {\T_n}$,
\begin{equation}\P\left[\xi^{A}_{\exp(h_7(n))} \neq \varnothing,\;\xi^{A}_{\exp(h_7(n))} \neq \xi^{\underline{1}}_{\exp(h_7(n))} \right] \leq e^{-h_7(n)}.\label{eq:partCoup}\end{equation}
To this end, write $r_n = \exp(h_7(n)),\;r_n' = r_n/2,\; r_n'' = r_n' - m_2^{1/2}$. Fix $y \in \T_n$; we have
$$\begin{aligned}
\P\left[\xi^{A}_{r_n} \neq \varnothing,\; \xi^{A}_{r_n}(y) \neq \xi^{\underline{1}}_{r_n}(y) \right] &\leq \P\left[\xi^{A}_{r_n''} \neq \varnothing,\; \xi^{A}_{r_n''}\notin \mathscr{H}_n \right] +\P\left[\hat \xi^{(y, r_n)}_{r_n'} \neq \varnothing,\; \hat \xi^{(y, r_n)}_{r_n'}\notin \mathscr{H}_n \right] \\&\qquad\qquad\qquad\qquad\qquad+ \P\left[\xi^{A}_{r_n'} \cap \hat \xi^{(y,r_n)}_{r_n'} = \varnothing \left|\;  \xi^{A}_{r_n''}, \hat \xi^{(y, r_n)}_{r_n'}\in \mathscr{H}_n\right.\right]
\end{aligned}$$
The first and second terms are less than $e^{-h_5(n)}$ by (\ref{eq:Hlong}). The third term is less than $e^{-h_4(n)/4n}$ by (\ref{eq:Hxi}). Summing over all choices of $y$, we conclude that the probability in (\ref{eq:partCoup}) is less than $d^{n+1}\cdot (2e^{-h_5(n)} + e^{-h_4(n)/4n}) < e^{-h_7(n)}$. 

Now, the probability in the statement of the corollary is less than
$$\begin{aligned}&\sum_{x\in\T_n}\sum_{\substack{B \subset \T_n:B \neq \varnothing}} \P\left[ \xi^x_{e^{h_6(n)} - e^{h_7(n)}} = B \neq \xi^{\un{1}}_{e^{h_6(n)} - e^{h_7(n)}}\right] \cdot \P\left[\xi^B_{e^{h_7(n)}} \neq \varnothing,\;\xi^B_{e^{h_7(n)}} \neq \xi^{\un{1}}_{e^{h_7(n)}}\right] \\&\qquad\leq e^{-h_7(n)}\cdot \sum_{x\in\T_n}\P\left[ \xi^x_{e^{h_6(n)}-e^{h_7(n)}} \neq \varnothing,\;\xi^x_{e^{h_6(n)}-e^{h_7(n)}} \neq \xi^{\un{1}}_{e^{h_6(n)}-e^{h_7(n)}} \right].\end{aligned}$$
Iterating, this shows that
$$\P\left[\exists x \in \T_n: \xi^{x}_{\exp(h_6(n))} \neq  \varnothing,\; \xi^{x}_{\exp(h_6(n))} \neq \xi^{\underline{1}}_{\exp(h_6(n))} \right] \leq |\T_n|\cdot (e^{-h_7(n)})^{e^{h_6(n)}/e^{h_7(n)}} < e^{-d^{(n^2)}}$$
when $n$ is large enough.
\end{proof}

\begin{corollary}\label{cor:anyCouple}
For $n$ large enough,
$$\P\Ll[\xi^o_{\exp(h_5(n))} \neq \varnothing \Rr] > \frac{\sigma}{4}.$$
\end{corollary}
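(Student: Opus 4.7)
My plan is to prove the stronger statement $\P[\xi^o_{\exp(h_5(n))} \in \mathscr{H}_n] > \sigma/2$, which implies the corollary since every element of $\mathscr{H}_n$ is non-empty. The strategy is to produce, starting from the single infected root and within a time $T_2$ of order $\sqrt{n}$, a configuration whose $\Gamma$-value is so large that a single application of (\ref{eq:condTransp}) carries us into $\mathscr{H}_n$ at time $\exp(h_5(n))$.

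\textbf{Step 1: fanning out to level $m_1$.} Let $T_1 := Sm_1/(2K)$. Since $m_1 = \lfloor n^{1/4} \rfloor \to \infty$, Lemma \ref{lem:ss1}(ii) applies to the contact process on a tree of height $m_1$ for all $n$ large enough. By monotonicity of the contact process in its Harris system, the full process on $\T_n$ dominates the process built from the marks inside the ball $\T_n(o, m_1) \simeq \T_{m_1}$, and I conclude $\P[E_1] > \sigma$ for $E_1 := \{|\xi^o_{T_1} \cap \L_n(m_1)| > \bar d^{m_1}\}$.

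\textbf{Step 2: upgrading to a large $\Gamma$.} For each $x \in \L_n(m_1)$, let $\eta[x]_s$ be the contact process in $\T_n(x)$ started at time $T_1$ from $\xi^o_{T_1} \cap \T_n(x)$, using only the Harris system inside $\T_n(x)$; as in the proof of Lemma \ref{lem:compRW}, these processes are conditionally independent given $\xi^o_{T_1}$. The set $\mathscr{G}_{m_2}$ is monotone increasing (since $F$, and therefore $\upphi$, is monotone in its argument), so on $\{x \in \xi^o_{T_1}\}$, monotonicity in the initial configuration combined with (\ref{eq:prop3}), applied to the tree $\T_n(x)$ of height $m_2$, yields $\P[\eta[x]_{m_2^{1/2}} \in \mathscr{G}_{m_2} \mid x \in \xi^o_{T_1}] > \sigma/2$. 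Since $\xi^o_{T_2} \cap \T_n(x) \supseteq \eta[x]_{m_2^{1/2}}$ for $T_2 := T_1 + m_2^{1/2}$, the quantity $\Gamma(\xi^o_{T_2})$, on $E_1$, stochastically dominates a $\mathrm{Bin}(\bar d^{m_1}, \sigma/2)$ variable. A standard Chernoff bound then gives $\P[E_2 \mid E_1] \ge 1 - e^{-c\bar d^{m_1}}$ for some $c > 0$, where $E_2 := \{\Gamma(\xi^o_{T_2}) > (\sigma/4)\bar d^{m_1}\}$.

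\textbf{Step 3: exponential-time survival via (\ref{eq:condTransp}).} Since $T_2 = O(\sqrt{n}) = o(\exp(h_9(n)))$, the time $t := \exp(h_5(n)) - T_2$ lies in $[\exp(h_9(n)), \exp(h_5(n))]$ for all $n$ large. Applying (\ref{eq:condTransp}) at time $T_2$ via the Markov property, with $a = \Gamma(\xi^o_{T_2}) \ge (\sigma/4)\bar d^{m_1}$ on $E_2$, gives $\P[\xi^o_{\exp(h_5(n))} \in \mathscr{H}_n \mid E_2] \ge 1 - e^{-(\sigma/4)\bar d^{m_1}}$. Chaining the three steps,
$$\P\bigl[\xi^o_{\exp(h_5(n))} \neq \varnothing\bigr] \ge \P\bigl[\xi^o_{\exp(h_5(n))} \in \mathscr{H}_n\bigr] \ge \sigma\bigl(1 - e^{-c\bar d^{m_1}}\bigr)\bigl(1 - e^{-(\sigma/4)\bar d^{m_1}}\bigr) > \sigma/2$$
for $n$ large enough. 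The only delicate point is the conditional independence setup in step 2; the time-scale checks in step 3 are routine, and the remaining ingredients are assembled from estimates already established in this section.
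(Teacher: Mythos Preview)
Your proof is correct and follows essentially the same route as the paper's. The paper's proof simply points back to the argument for \eqref{eq:condTrans} to obtain $\P[\exists t:\Gamma(\xi^o_t)>(\sigma/4)\bar d^{m_1}]>\sigma/2$ and then invokes \eqref{eq:condTransp}; your Steps~1--2 are exactly a spelled-out version of that back-reference (Lemma~\ref{lem:ss1}(ii) followed by \eqref{eq:prop3} and a binomial bound), and your Step~3 is the same application of \eqref{eq:condTransp}. The only cosmetic differences are that you work with a fixed deterministic time $T_2$ rather than a stopping time, and that by using a Chernoff bound instead of the paper's crude ``$>1/2$'' you land on $\sigma/2$ rather than $\sigma/4$.
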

\begin{proof}
As explained in the proof of (\ref{eq:condTrans}), we have
$$\P\Ll[\exists t: \Gamma(\xi^o_t) > \frac{\sigma}{4}\bar{d}^{m_1} \Rr] > \frac{\sigma}{2}.$$
If $\Gamma(\xi^o_t) > \frac{\sigma}{4}\bar{d}^{m_1}$ for some $t$, then (\ref{eq:condTransp}) guarantees that, with probability larger than $1 - e^{-(\sigma/4)\bar{d}^{m_1}}$, we have $\xi^{o}_{t+e^{h_5(n)}} \in \mathscr{H}_n$, so in particular, $\xi^o_{e^{h_5(n)}} \neq \varnothing$.
\end{proof}

\subsection{Level 4: survival for time $e^{cd^n}$}
\label{ss:l4}
We start stating a simple result about the extinction time of the contact process. We refer the reader to \cite[Lemma~4.5]{mmvy} for the proof.
\begin{lemma}
\label{l:attract}
For every $s > 0$, we have
$$
\P\Ll[\uptau_{\T_n} \le s \Rr] \le \frac{s}{\E[\uptau_{\T_n}]},
$$	
Moreover, there exists a constant $C$ such that for every $n$, $\E[\uptau_{\T_n}] \le e^{C|\T_n|}$.
\end{lemma}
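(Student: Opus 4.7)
The plan is to prove the two statements separately; both rest on the attractiveness (monotonicity) of the contact process in the initial configuration, which on $\T_n$ implies that starting from the fully infected configuration yields the stochastically largest process at every time.

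For the first inequality, the key observation is the submultiplicativity estimate
$$\P\Ll[\uptau_{\T_n}>s+t\Rr]\le \P\Ll[\uptau_{\T_n}>s\Rr]\cdot\P\Ll[\uptau_{\T_n}>t\Rr]\qquad\text{for all }s,t\ge0.$$
To see this, condition on $\xi^{\un 1}_s=B$: by the Markov property, the conditional probability that the process survives an additional $t$ time units equals $\P[\xi^B_t\neq\varnothing]$, and by attractiveness (coupling through the graphical construction) this is at most $\P[\xi^{\un 1}_t\neq\varnothing]=\P[\uptau_{\T_n}>t]$. Writing $p=\P[\uptau_{\T_n}>s]$ and iterating yields $\P[\uptau_{\T_n}>ks]\le p^k$ for every $k\in\N$. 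Then
$$\E[\uptau_{\T_n}]=\int_0^\infty \P[\uptau_{\T_n}>t]\,\d t\le \sum_{k=0}^\infty s\,p^k=\frac{s}{1-p}=\frac{s}{\P[\uptau_{\T_n}\le s]},$$
which rearranges to the claim.

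For the second statement I would exhibit a cheap deterministic-extinction event of probability at least $e^{-C|\T_n|}$. Using the Harris construction with the graph $\T_n$ (which has bounded degree $d+1$), let $E$ be the event that every $x\in\T_n$ has at least one point of $D^x$ in $[0,1]$ and, in addition, no directed edge $(x,y)$ has any point of $D^{(x,y)}$ in $[0,1]$. On $E$, any infection present at time $0$ is extinguished by time $1$ independently of the initial configuration, so $\{\uptau_{\T_n}\le 1\}\supset E$. By independence of the Poisson processes,
$$\P[E]\ge (1-e^{-1})^{|\T_n|}\cdot e^{-\lambda(d+1)|\T_n|}\ge e^{-C|\T_n|}$$
for a suitable $C=C(d,\lambda)$. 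Applying the first part with $s=1$ gives $\E[\uptau_{\T_n}]\le 1/\P[\uptau_{\T_n}\le 1]\le e^{C|\T_n|}$.

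The only nontrivial step is the submultiplicativity bound, and its only real content is the monotone coupling: once attractiveness is granted, both parts are routine. No clever estimate is required for the upper bound on $\E[\uptau_{\T_n}]$ because we are content with any exponential in the volume.
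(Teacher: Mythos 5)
Your proof is correct, and it is essentially the same argument as the one in \cite[Lemma~4.5]{mmvy}, to which the paper delegates: the first inequality follows from submultiplicativity of $t\mapsto\P[\uptau_{\T_n}>t]$ (itself a consequence of the Markov property plus attractiveness) via $\E[\uptau_{\T_n}]\le s/(1-\P[\uptau_{\T_n}>s])$, and the second follows by exhibiting a wipe-out event on $[0,1]$ in the graphical construction whose probability is at least exponential in $-|\T_n|$. No gap.
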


We will write $(\xi^{A,s}_t)_{t \ge s}$ for the contact process started at time $s$ with $A$ infected, that is,
$$\xi^{A,s}_t = \{y: A \times \{s\} \;\leftrightarrow \; (y,t)\},\qquad t \ge s.$$ 
Similarly we write $\xi^{x,s}_t$ and $\xi^{\un{1},s}_t$. We of course assume these processes are defined with the same graphical construction as the one used for the definition of the original contact process $(\xi_t)_{t \ge 0}$ on $\T_n$, so that we can consider them all in the same probability space. 

Let $t_k = k\cdot e^{h_6(n)},\; k = 0, 1, \ldots$ and define the events
$$\begin{aligned}
&E_k = \left\{\xi^{\un{1},t_{k-1}}_{t_{k+1}} \neq \varnothing \right\}, \\
&F_k = \left\{\text{for all } x \in \mathbb{T}_n,\text{ either } \xi^{x,t_{k-1}}_{t_k} = \varnothing \text{ or }\xi^{x, t_{k-1}}_{t_k} = \xi^{\un{1},t_{k-1}}_{t_k}\right\},\qquad k\geq 1.
\end{aligned}$$
\begin{lemma}
\label{lem:NemLemC}
On $\cap_{\ell = 1}^k (E_\ell \cap F_\ell)$, we have $\xi^{\un{1}}_{t_{k+1}} \neq \varnothing$.
\end{lemma}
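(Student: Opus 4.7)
I would argue by induction on $k\ge 1$. The base case $k=1$ is immediate: since $t_0=0$, the event $E_1$ literally reads $\xi^{\un{1}}_{t_2}\neq\varnothing$, so $F_1$ is not even needed at this stage.

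For the inductive step I would exploit the semigroup identity encoded in the graphical construction at the intermediate time $t_{k-1}$:
\begin{equation*}
\xi^{\un{1}}_{t_k} \;=\; \xi^{\xi^{\un{1}}_{t_{k-1}},\, t_{k-1}}_{t_k} \;=\; \bigcup_{x\in \xi^{\un{1}}_{t_{k-1}}} \xi^{x,\, t_{k-1}}_{t_k}.
\end{equation*}
The event $F_k$ says that each set $\xi^{x,\, t_{k-1}}_{t_k}$ appearing in this union is either empty or equal to $\xi^{\un{1},\, t_{k-1}}_{t_k}$, so the union itself is either empty or equal to $\xi^{\un{1},\, t_{k-1}}_{t_k}$. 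The inductive hypothesis (applied at level $k-1$) gives $\xi^{\un{1}}_{t_k}\neq\varnothing$, so the dichotomy is resolved in favour of the non-trivial alternative, and I obtain the key identity
\begin{equation*}
\xi^{\un{1}}_{t_k} \;=\; \xi^{\un{1},\, t_{k-1}}_{t_k} \qquad \text{on } \bigcap_{\ell=1}^{k}(E_\ell\cap F_\ell).
\end{equation*}
Applying the same semigroup identity once more, now between $t_k$ and $t_{k+1}$, I get
\begin{equation*}
\xi^{\un{1}}_{t_{k+1}} \;=\; \xi^{\xi^{\un{1}}_{t_k},\, t_k}_{t_{k+1}} \;=\; \xi^{\xi^{\un{1},\, t_{k-1}}_{t_k},\, t_k}_{t_{k+1}} \;=\; \xi^{\un{1},\, t_{k-1}}_{t_{k+1}},
\end{equation*}
and the right-hand side is non-empty by the definition of $E_k$. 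This closes the induction.

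I do not expect any real obstacle here: the lemma is a pure book-keeping consequence of the coupling encoded in $F_k$ together with the survival guaranteed by $E_k$. The only small point requiring care is to keep the three relevant starting times $0$, $t_{k-1}$ and $t_k$ straight, and to notice that it is precisely the non-emptiness supplied by the inductive hypothesis that forces the $F_k$-dichotomy to pick out the surviving alternative in the union defining $\xi^{\un{1}}_{t_k}$.
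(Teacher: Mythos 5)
Your proof is correct and follows essentially the same inductive route as the paper: decompose $\xi^{\un{1}}_{t_k}=\bigcup_{x\in\xi^{\un{1}}_{t_{k-1}}}\xi^{x,t_{k-1}}_{t_k}$, use $F_k$ to collapse each non-empty piece to $\xi^{\un{1},t_{k-1}}_{t_k}$, and invoke $E_k$ to keep it alive up to $t_{k+1}$. The only cosmetic difference is that you establish the full configuration identity $\xi^{\un{1}}_{t_k}=\xi^{\un{1},t_{k-1}}_{t_k}$ before applying the Markov property, whereas the paper just exhibits one surviving vertex $x\in\xi^{\un{1}}_{t_{k-1}}$ and follows its lineage to $t_{k+1}$.
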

\begin{proof}
It is enough to prove that, for any $k$,
\begin{equation}\left\{\xi^{\un{1}}_{t_k} \neq \varnothing\right\} \cap E_k \cap F_k \subseteq \left\{ \xi^{\un{1}}_{t_{k+1}} \neq \varnothing\right\}\label{eq:newk}\end{equation}
For $k = 0$, this follows directly from the definition of $E_0$. Assume $k \geq 1$. Writing
$$\xi^{\un{1}}_{t_k} = \bigcup_{x \in \xi^{\un{1}}_{t_{k-1}}} \xi^{x, t_{k-1}}_{t_k},\qquad \xi^{\un{1},t_{k-1}}_{t_{k+1}} = \bigcup_{y\in\mathbb{T}_n} \xi^{y,t_{k-1}}_{t_{k+1}},$$
we see that the occurrence of $\{\xi^{\un{1}}_{t_k} \neq \varnothing\}$ and $E_k=\{\xi^{\un{1},t_{k-1}}_{t_{k+1}} \neq \varnothing\}$ imply that there exist $x \in \xi^{\un{1}}_{t_{k-1}}$, $y \in \mathbb{T}_n$ such that $\xi^{x,t_{k-1}}_{t_k},\;\xi^{y,t_{k-1}}_{t_k},\;\xi^{y,t_{k-1}}_{t_{k+1}}$ are all non-empty. Since $F_k$ occurs, $\xi^{x,t_{k-1}}_{t_k},\;\xi^{y,t_{k-1}}_{t_k}$ being non-empty implies that they are equal (as both are equal to $\xi^{\un{1},t_{k-1}}_{t_{k}}$), hence we also have
$$\varnothing \neq \xi^{y,t_{k-1}}_{t_{k+1}}=\xi^{x,t_{k-1}}_{t_{k+1}} \leq \xi^{\un{1}}_{t_{k+1}}.$$
\end{proof}

Our final (Level 4) recursion will be very simple. Our subtrees will be the $d$ trees that are rooted at the neighbours of the root; we write $x_1, \ldots, x_d$ to denote these neighbours. 
\begin{proposition}
\label{p:coupling}
For $n$ large enough, we have
$$
\E[\uptau_{\T_{n}}] \ge \left(\frac{\E[\uptau_{\T_{n-1}}]}{e^{h_6(n)}}\right)^d.
$$
\end{proposition}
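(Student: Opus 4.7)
The plan is to combine the coupling input of Corollary~\ref{prop:coupling}, via the survival mechanism of Lemma~\ref{lem:NemLemC}, with independence across the $d$ subtrees $\T_n(x_1), \ldots, \T_n(x_d)$ rooted at the children of $o$, each isomorphic (as a rooted tree) to $\T_{n-1}$. Write $T = \E[\uptau_{\T_{n-1}}]$ and pick $K = \lfloor c\,(T/e^{h_6(n)})^d \rfloor$ for a small absolute constant $c > 0$. The aim is to show that with probability at least $\tfrac{1}{2}$, $\xi^{\un{1}}_{t_{K+1}} \neq \varnothing$; since $t_{K+1} = (K+1)\,e^{h_6(n)} \ge c\,e^{h_6(n)}(T/e^{h_6(n)})^{d}$, this gives $\E[\uptau_{\T_n}] \ge (T/e^{h_6(n)})^{d}$ with a factor $\sim e^{h_6(n)}$ to spare. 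By Lemma~\ref{lem:NemLemC}, it then suffices to prove $\P\bigl[\bigcap_{k=1}^{K}(E_k \cap F_k)\bigr] \ge \tfrac{1}{2}$.

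The events $F_k$ are handled directly by Corollary~\ref{prop:coupling} and time-translation invariance of the graphical construction, yielding $\P[F_k^c] \le e^{-d^{(n^{2})}}$. The events $E_k$ require the key step: a restriction to subtrees. For each $i = 1, \ldots, d$ and each $k$, let $\bar\eta^{(i),t_{k-1}}$ denote the contact process on $\T_n(x_i)$ started from full occupancy at time $t_{k-1}$, built from the restriction of the Harris system $H$ to $\T_n(x_i)$. The processes $\bar\eta^{(1),t_{k-1}}, \ldots, \bar\eta^{(d),t_{k-1}}$ are independent since they use disjoint parts of $H$, each has the law of the fully-infected contact process on $\T_{n-1}$, and by monotonicity in the set of available arrows,
\[
\xi^{\un{1},t_{k-1}}_{t} \cap \T_n(x_i) \;\supseteq\; \bar\eta^{(i),t_{k-1}}_{t} \qquad (t \ge t_{k-1}).
\]
Consequently $E_k^c$ is contained in the event that all $d$ of these independent subtree processes are extinct by $t_{k+1}$, and Lemma~\ref{l:attract} gives
\[
\P[E_k^c] \;\le\; \P\Ll[\uptau_{\T_{n-1}} \le 2\,e^{h_6(n)}\Rr]^{d} \;\le\; \Bigl(\frac{2\,e^{h_6(n)}}{T}\Bigr)^{d}.
\]

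A union bound over $1 \le k \le K$ then yields $\P\bigl[\bigcap_{k=1}^{K}(E_k \cap F_k)\bigr] \ge 1 - K\,(2e^{h_6(n)}/T)^{d} - K\,e^{-d^{(n^{2})}}$. Taking $c$ small enough forces the first term to be at most $1/4$; for the second, Lemma~\ref{l:attract} supplies $T \le e^{C|\T_{n-1}|}$ and hence $K \le e^{Cd|\T_{n-1}|}$, which is negligible against $e^{d^{(n^{2})}}$ for large $n$, so the second error is $o(1)$. Conceptually, the only genuinely non-routine step in this plan is the subtree restriction: it is exactly what upgrades the bound from linear in $T$ (which a single subtree would yield) to the claimed $d$-th power $(T/e^{h_6(n)})^{d}$, by converting the branching at the root of $\T_n$ into $d$ truly independent copies of $\uptau_{\T_{n-1}}$. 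Every other ingredient---the $E_k, F_k$ bookkeeping packaged in Lemma~\ref{lem:NemLemC}, the tail bound of Lemma~\ref{l:attract}, and the coupling control of Corollary~\ref{prop:coupling}---is already in place.
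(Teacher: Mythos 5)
Your proposal is essentially the paper's own proof: the same appeal to Lemma~\ref{lem:NemLemC} to reduce to controlling $\bigcap_k (E_k\cap F_k)$, the same use of independence of the $d$ subtree processes rooted at the children of $o$ together with Lemma~\ref{l:attract} to bound $\P[E_k^c]\le(2e^{h_6(n)}/\E[\uptau_{\T_{n-1}}])^d$, the same invocation of Corollary~\ref{prop:coupling} for $\P[F_k^c]$, and the same union bound. The only (cosmetic) difference is in the final arithmetic: you introduce a small constant $c$ in the choice of $K$, whereas the paper takes $t = 2(\E[\uptau_{\T_{n-1}}]/e^{h_6(n)})^d$ outright and observes that the extra $1/e^{h_6(n)}$ factor from the number of summands already forces the total below $1/2$ for large $n$, making the tunable $c$ unnecessary.
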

\begin{proof}
By the above lemma,
\begin{equation}
\label{eq:newPC}
\P\left[ \uptau_{\T_n} \leq t\right] \leq \sum_{k=0}^{\lfloor t/e^{h_6(n)} \rfloor} \left( \P[E_k^c] + \P[F_k^c] \right).
\end{equation}
We have
\begin{equation}
\label{eq:newPC1}
\P[E_k^c] \leq \prod_{i=1}^d \P[\T_n(x_i)\times \{t_{k-1}\} \nleftrightarrow \T_n(x_i)\times \{t_{k+1}\} \text{ inside } \T_n(x_i)] \leq \left( \frac{2e^{h_6(n)}}{\E[\uptau_{\T_{n-1}}]}\right)^d,
\end{equation}
where the second inequality follows from Lemma \ref{l:attract}. We also have, by the definition of $F_k$ and Corollary \ref{prop:coupling},
\begin{equation}
\label{eq:newPC2}
\P[F_k^c] \leq e^{-d^{(n^2)}}.
\end{equation}
Using (\ref{eq:newPC1}) and (\ref{eq:newPC2}) in (\ref{eq:newPC}) with $t = 2\left(\frac{\E[\uptau_{\T_n}]}{e^{h_6(n)}}\right)^d$ we get:
$$\P\left[\uptau_{\T_n} \le 2\left(\frac{\E[\uptau_{\T_{n-1}}]}{e^{h_6(n)}}\right)^d \right] \le 2\left( \frac{\E[\uptau_{\T_{n-1}}]}{e^{h_6(n)}}\right)^d \cdot \frac{1}{e^{h_6(n)}} \cdot \left( \left( \frac{2e^{h_6(n)}}{\E[\uptau_{\T_{n-1}}]} \right)^d + e^{-d^{(n^2)}}\right) \le \frac{1}{2}$$
when $n$ is large. This finishes the proof.
\end{proof}

\begin{proofof}\emph{Theorem~\ref{thm:mainsup}}.
From Proposition~\ref{p:coupling}, we know that
$$
\frac{\log \E[\uptau_{\T_{n}}]}{d^{n}} + \frac{h_6(n)}{d^{n-1}} \ge \frac{\log \E[\uptau_{\T_{n-1}}]}{d^{n-1}}.
$$
Corollary~\ref{prop:allInf} ensures that for $n$ sufficiently large,
$$
\log \E[\uptau_{\T_{n}}] \ge h_5(n)/2.
$$
Writing 
$$
u_n = \frac{\log \E[\uptau_{\T_{n}}]}{d^n},
$$
we thus get that for $n$ sufficiently large,
$$
u_{n} \Ll(1 + \frac{1}{n^2}\Rr) \ge u_{n-1} .
$$
Letting $\rho_n = \prod_{i = 1}^n (1+1/i^2)$, we can rewrite this as
$u_n \ \rho_n \ge u_{n-1} \ \rho_{n-1}$, for $n$ sufficiently large. In other words, the sequence $u\rho$ is ultimately increasing. It thus converges to some constant $c \in \R \cup \{+\infty\}$. Clearly, $u_n \ \rho_n$ is positive if $n$ is large enough, so $c > 0$. Since the sequence $\rho$ converges to a finite constant, it follows from Lemma~\ref{l:attract} that $c$ is finite. We have thus shown that the sequence $u\rho$ converges to $c \in (0,+\infty)$, and this implies part (a) of the theorem. Part (b) now follows from Proposition A.1 in \cite{mmvy}.
\end{proofof}

\subsection{Other initial configurations}
In this subsection we will prove Theorem \ref{thm:mainsupp}. We start proving that the theorem follows from:
\begin{proposition}
\label{prop:auxEnd}There exists $\delta >0$ such that, for large enough $n$,
$$\inf_{A \subset \T_n,\; A \neq \varnothing}\;\P\Ll[\uptau_A > \exp(h_5(n)) \Rr] > \delta.$$
\end{proposition}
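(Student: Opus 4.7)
By monotonicity of the contact process in its initial set, $\uptau_A \ge \uptau_{\{x\}}$ for every $x \in A$, so it suffices to produce a constant $\delta^* > 0$ such that for every $n$ large enough and every $x \in \T_n$,
$$
\P[\uptau_{\{x\}}^{\T_n} > \exp(h_5(n))] \ge \delta^*.
$$
I plan to argue this by induction on $n$. The structural input is that for any ancestor $y$ of $x$ the contact process on $\T_n$ started from $\{x\}$ dominates its restriction to infection paths inside the subtree $\T_n(y)$, and the latter has the same law as the contact process on $\T_{n-\mathsf{dist}(o,y)}$ started from $\{x\}$ viewed in that smaller tree.

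For the inductive step, fix $x \in \T_n$ at depth $k$ and take $y$ to be the deepest ancestor of $x$ (possibly $y = x$) for which $\T_n(y) \simeq \T_{n'}$ has height $n' \ge \lceil n^{5/6} \rceil$. If $k \le n - \lceil n^{5/6}\rceil$ then $y = x$ and $x$ sits at the root of $\T_n(y)$, so Corollary \ref{cor:anyCouple} applied directly to $\T_{n'}$ gives $\P[\uptau_{\{x\}}^{\T_n(y)} > \exp(h_5(n'))] \ge \sigma/4$, with no appeal to the inductive hypothesis. If $k > n - \lceil n^{5/6}\rceil$ then $n' = \lceil n^{5/6}\rceil$ and $x$ lies inside $\T_n(y)$ at strictly positive depth; here the inductive hypothesis applied to $\T_{n'}$ yields $\P[\uptau_{\{x\}}^{\T_n(y)} > \exp(h_5(n'))] \ge \delta^*$. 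Since $n' \ge n^{5/6}$ forces $h_5(n') \ge h_6(n)$, both cases deliver
$$
\P[\uptau_{\{x\}}^{\T_n} > \exp(h_6(n))] \ge \min(\sigma/4,\,\delta^*).
$$

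To pass from the time horizon $\exp(h_6(n))$ to the target $\exp(h_5(n))$, I invoke the level~3 coupling: on the event of Corollary \ref{prop:coupling}, which has probability at least $1 - e^{-d^{(n^2)}}$, the configuration $\xi^{\{x\}}_{\exp(h_6(n))}$ is either empty or equal to $\xi^{\un{1}}_{\exp(h_6(n))}$, and in the latter case the two processes remain equal at all later times since they are driven by the same graphical construction. Combined with Corollary \ref{prop:allInf},
$$
\P[\uptau_{\{x\}}^{\T_n} > \exp(h_5(n))] \ge \min(\sigma/4,\,\delta^*) - e^{-d^{(n^2)}} - e^{-h_5(n)}.
$$

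The main obstacle lies in closing the induction: a single step erodes the constant by an additive $e^{-d^{(n^2)}} + e^{-h_5(n)}$, and because the near-leaf case forces the recursion to iterate before case~A applies, one must control the accumulation carefully. Since $n \mapsto \lceil n^{5/6}\rceil$ bottoms out in $O(\log\log n)$ rounds and the per-step error is doubly exponentially small in the relevant scale, the telescoping loss is dominated by the error at the last iteration; a positive base case (free for any bounded $n$, since every singleton survives any finite horizon with positive probability and $\T_n$ has finitely many vertices) can then be propagated to all sufficiently large $n$ provided $\delta^*$ is chosen small enough, e.g.\ $\delta^* = \sigma/8$.
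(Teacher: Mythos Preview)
Your overall strategy---reduce to a single vertex, survive inside a subtree to time $e^{h_6(n)}$, then upgrade to $e^{h_5(n)}$ via the coupling of Corollary~\ref{prop:coupling}---matches the paper's. The gap is in how you control the accumulated erosion against the base case.

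For a leaf (or near-leaf) vertex $x$, your Case~A never applies: at every scale $x$ remains a leaf of the chosen subtree, so the recursion $n \mapsto \lceil n^{5/6}\rceil$ runs all the way down to some fixed base level $N_0$. There the only bound you offer is the trivial one (no recovery mark at $x$ up to time $e^{h_5(N_0)}$), which is $e^{-e^{h_5(N_0)}}$. Meanwhile your per-step erosion, coming from Corollary~\ref{prop:allInf}, is $e^{-h_5(n)}$, and the telescoped loss is dominated by the last step, of order $e^{-h_5(N_0)}$. Since $e^{-h_5(N_0)} \gg e^{-e^{h_5(N_0)}}$ for \emph{every} $N_0$, the erosion always swamps the base constant; the choice $\delta^* = \sigma/8$ is therefore unjustified, and the induction does not close.

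The paper avoids this by feeding in a much sharper survival bound. Instead of Corollary~\ref{prop:allInf}, it combines the coupling with Lemma~\ref{l:attract} and Theorem~\ref{thm:mainsup}(a) (already available at this point) via Lemma~\ref{lem:taux}: at height $j$, the probability that the extinction time lands in the window $[e^{h_6(j)},\,e^{h_5(j)}]$ is at most $e^{-d^{j^2}} + e^{h_5(j)}/\E[\uptau_{\T_j}]$, which is of order $e^{-d^{j/2}}$ because $\E[\uptau_{\T_j}]$ grows like $e^{c d^j}$. The paper then runs the chain one level at a time, $\T_N \subset \T_{N+1} \subset \cdots \subset \T_n$, so the total erosion is $\sum_{j\ge N} e^{-d^{j/2}}$, which can be made smaller than the fixed positive base constant at level~$N$. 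If you replace your appeal to Corollary~\ref{prop:allInf} by this sharper input (equivalently, invoke Lemma~\ref{lem:taux}), your recursive scheme can be repaired.
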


\begin{proofof}{\textit{Theorem \ref{thm:mainsupp}}}.
Since, for any $A$,
$$\left\{ \uptau_A > \upalpha \E[\uptau_{\T_n}]\right\} \supset \left\{\xi^A_{e^{h_5(n)}} = \xi^{\underline{1}}_{e^{h_5(n)}} \neq \varnothing,\; \xi^{\underline{1}}_{\upalpha \E[\uptau_{\T_n}]} = \xi^{\underline{1},e^{h_5(n)}}_{\upalpha \E[\uptau_{\T_n}]} \neq \varnothing\right\},$$
we get
$$\begin{aligned}\P\Ll[\xi^A_{\upalpha\E[\uptau_{\T_n}]} \neq \varnothing\Rr] &\ge \P\Ll[\xi^A_{e^{h_5(n)}} \neq \varnothing\Rr]\cdot \P\Ll[\xi^{\underline{1},e^{h_5(n)}}_{\upalpha \E[\uptau_{\T_n}]} \neq \varnothing\Rr]\\&\quad - \P\Ll[\xi^A_{e^{h_5(n)}} \neq \varnothing,\;\xi^A_{e^{h_5(n)}} \neq \xi^{\underline{1}}_{e^{h_5(n)}}\Rr] - \P\Ll[ \xi^{\underline{1}}_{\upalpha \E[\uptau_{\T_n}]} \neq \xi^{\underline{1},e^{h_5(n)}}_{\upalpha \E[\uptau_{\T_n}]}\Rr].\end{aligned}$$
Now note that, by Theorem \ref{thm:mainsup}, 
$$\lim_{n \to \infty} \P\Ll[\xi^{\underline{1},e^{h_5(n)}}_{\upalpha \E[\uptau_{\T_n}]} = \varnothing \Rr] = \lim_{n \to \infty} \P\Ll[ \uptau_{\T_n} > \upalpha \E[\uptau_{\T_n}] - e^{h_5(n)}\Rr] = e^{-\upalpha}$$
and, by Corollary \ref{prop:coupling}, $$\lim_{n\to\infty}\P\Ll[\xi^A_{e^{h_5(n)}} \neq \varnothing,\;\xi^A_{e^{h_5(n)}} \neq \xi^{\underline{1}}_{e^{h_5(n)}}\Rr]= \lim_{n\to\infty}\P\left[\xi^{\underline{1}}_{\upalpha \E[\uptau_{\T_n}]} \neq \xi^{\underline{1}, e^{h_5(n)}}_{\upalpha \E[\uptau_{\T_n}]}\right] = 0,$$
so the desired statement follows.
\end{proofof}

We now turn to the proof of Proposition \ref{prop:auxEnd}. We will need the following preliminary result.
\begin{lemma}\label{lem:taux}
If $n$ is large enough, $x \in \T_n$ and $t \ge \exp(h_6(n))$,
$$\P\Ll[\uptau_{\{x\}} \in [\exp(h_6(n)),\; t]\Rr] \le e^{-d^{(n^2)}} + \frac{t}{\E[\uptau_{\T_n}]}.$$
\end{lemma}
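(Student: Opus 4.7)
The plan is to split the event $\{\uptau_{\{x\}} \in [\exp(h_6(n)),\; t]\}$ according to whether the single-seed process has coupled with the fully infected process by time $\exp(h_6(n))$. The decomposition I have in mind is
\[
\P\Ll[\uptau_{\{x\}} \in [e^{h_6(n)},\; t]\Rr] \le \P\Ll[\xi^x_{e^{h_6(n)}} \neq \varnothing,\; \xi^x_{e^{h_6(n)}} \neq \xi^{\underline{1}}_{e^{h_6(n)}}\Rr] + \P\Ll[\xi^x_{e^{h_6(n)}} = \xi^{\underline{1}}_{e^{h_6(n)}},\; \uptau_{\{x\}} \le t\Rr].
\]
The first term is immediately bounded by $e^{-d^{(n^2)}}$ via Corollary~\ref{prop:coupling}, since that corollary is a uniform statement over all starting vertices $x \in \T_n$ (and clearly the event for a single fixed $x$ is contained in the union over all $x$).

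For the second term, the key observation is that the processes $(\xi^x_s)$ and $(\xi^{\underline{1}}_s)$ are built from the same graphical construction, and the set $\xi^x_s$ is monotone increasing in the initial configuration. Hence once $\xi^x_{e^{h_6(n)}} = \xi^{\underline{1}}_{e^{h_6(n)}}$, the two configurations coincide for all subsequent times, so in particular $\uptau_{\{x\}} = \uptau_{\T_n}$ on this event. Consequently the second probability is bounded by $\P[\uptau_{\T_n} \le t]$, which Lemma~\ref{l:attract} controls by $t/\E[\uptau_{\T_n}]$.

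I do not anticipate a real obstacle here: the two ingredients are both already in hand, and the only conceptual point is to use the coupling to turn a question about the single-seed extinction time into a question about $\uptau_{\T_n}$, at which point the attractiveness / Markov-type bound of Lemma~\ref{l:attract} closes the argument. The one small care to take is making sure the attractiveness argument is stated correctly so that equality of the two processes at time $e^{h_6(n)}$ propagates forward in time, which follows directly from the fact that both processes are generated from the same Harris system.
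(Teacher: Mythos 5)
Your proof is correct and follows the same route as the paper: decompose according to whether $\xi^x_{e^{h_6(n)}}$ has coupled with $\xi^{\underline 1}_{e^{h_6(n)}}$, bound the uncoupled case by Corollary~\ref{prop:coupling}, and bound the coupled case by $\P[\uptau_{\T_n}\le t]$ via Lemma~\ref{l:attract}. You have simply spelled out the coupling step (equality of the two configurations at time $e^{h_6(n)}$ propagating forward under the shared Harris system) that the paper leaves implicit.
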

\begin{proof}
The left-hand side is bounded by $$\P\Ll[\xi^x_{e^{h_6(n)}} \neq \varnothing,\; \xi^x_{e^{h_6(n)}} \neq \xi^{\underline{1}}_{e^{h_6(n)}}\Rr] + \P\Ll[\xi^{\underline{1}}_t = \varnothing\Rr].$$
The first term is less than $e^{-d^{(n^2)}}$ by Corollary \ref{prop:coupling} and the second term is less than $\frac{t}{\E[\uptau_{\T_n}]}$ by Lemma \ref{l:attract}.
\end{proof}

\begin{proofof}\textit{Proposition \ref{prop:auxEnd}}.
We choose $N$ large enough that
\begin{itemize}
\item[(1)] $h_5(n-1) > h_6(n) \; \forall n \ge N;$
\item[(2)] $\text{the conclusion of Corollary \ref{cor:anyCouple} holds for all $n \ge N$}$;
\item[(3)] ${\displaystyle \delta := \min(\sigma/4,\; e^{-h_5(N)}) - \sum_{j = N}^\infty e^{-d^{j/2}} > 0}.$
\end{itemize}

Now fix $n \ge N$ and let $A \subset \T_n$ be non-empty. Fix $x \in A$. If $\T_n(x)$ has height at least $N$ (or, in other words, of $\mathsf{dist}(o,x) \le n-N$), then let $y_0 = x$. Otherwise, let $y_0$ be the point in the path from $x$ to $o$ which is at distance $n-N$ from $o$. Then let $y_1 \sim y_2 \sim \ldots \sim y_k = o$ be the vertices in the path from $y_0$ to $o$, so that $y_{i+1} = \mathsf{p}(y_i)$ for each $i$ and $k = \mathsf{dist}(o,y_0)$. For each $i$, let $j_i$ be the height of $\T_n(y_i)$, that is, $j_i = n - \mathsf{dist}(o,y_i)$.

As before, we assume given a Harris system $H$ for the contact process on $\T_n$. For each $i$, we write $$\xi^x_{i,t}(z) = I\{(x,0) \;\lra\;(z,t)\;\text{inside } \T_n(y_i)\},\qquad z \in \T_n(y_i),\;t\ge 0.$$
Simply put, $(\xi^x_{i,t})_{t\ge 0}$ is the contact process on $\T_n(y_i)$, started from only $x$ infected, and constructed using the restriction of $H$ to $\T_n(y_i)$. Then define, for each $i$, the times
$$
\upkappa_i = \sup\{t: \xi^x_{i,t} \neq \varnothing\}.
$$
Also define the events
$$\begin{aligned}
&E_0 = \left\{\upkappa_0 > e^{h_5(j_0)} \right\};\\
&E_i = \left\{\upkappa_i \notin [e^{h_6(j_i)},\; e^{h_5(j_i)}]\right\},\qquad 1 \le i \le k
\end{aligned}$$ 
(obviously, if $x = y_0 = o$ and thus $k = 0$, the second line should be ignored).

We now claim that \begin{equation}\label{eq:ksGood}E_0 \cap \cdots \cap E_k \subset \{\xi^x_{\exp(h_5(n))} \neq \varnothing\} \subset \{\xi^A_{\exp(h_5(n))} \neq \varnothing\}.\end{equation}
Indeed, the second inclusion is evident and the first one is verified using the fact that $\upkappa_{i+1} \ge \upkappa_i$ for each $i$, together with item (1) in the choice of $N$:
$$\begin{aligned}
E_0 \cap E_1 \cap \cdots \cap E_k &= \left\{ \upkappa_0 > e^{h_5(j_0)}\right\} \cap \left\{\upkappa_1 \notin [e^{h_6(j_1)},\;e^{h_5(j_1)}]\right\} \cap E_2 \cap \cdots \cap E_k\\
&\subset \left\{\upkappa_1 > e^{h_5(j_1)}\right\} \cap E_2 \cap \cdots \cap E_k
\end{aligned}$$
and iterate.

We will thus be finished if we show that 
\begin{equation}\P\Ll[E_0 \cap \cdots \cap E_k\Rr] > \delta. \label{eq:toEnd}\end{equation}
First note that
$$\P[E_0] \ge \min(\sigma/4,\;e^{-h_5(N)}).$$
Indeed, if $j_0 > N$, then $P[E_0] > \sigma/4$ by Corollary \ref{cor:anyCouple} and if $j_0 = N$, then $P[E_0] > e^{-h_5(N)}$ simply by the fact that this is the probability that $x$ does not recover from time 0 to time $e^{h_5(N)}$. Second, note that for each $i \ge 1$, by Lemma \ref{lem:taux} and Theorem \ref{thm:mainsup},
$$\P[E_i] \ge 1 - e^{-d^{(j_i)^2}} - \frac{e^{h_5(j_i)}}{\E[\uptau_{\T_{j_i}}]} >  1 - e^{-d^{j_i/2}}.$$
(\ref{eq:toEnd}) now follows from item (3) in the choice of $N$.
\end{proofof}

\appendix

\section{Appendix -- Random walk estimates}
The purpose of this appendix is to prove Lemma~\ref{lem:rwest}, which is a statement concerning the Markov chain $Z^{(n)}$ with transition probabilities defined in \eqref{def:Z}. We begin with a statement concerning hitting times.
\begin{lemma}
\label{l:hitting}
For the chain ${Z}^{(n)}$ with transition probabilities defined in \eqref{def:Z}, let $H_0$ denote the hitting time of $\Z_- = \Z \cap (-\infty,0]$ and $H_1$ denote the hitting time of $h_4(n)$. For every $n$ large enough and every $a \le h_4(n)$,
\begin{equation}
\label{hitting}
\P\Ll[ H_0 < H_1 \ | \ {Z}^{(n)}_0 = a \Rr] \le e^{-a n^{2/7}}.
\end{equation}
Let $H_{3/4}$ be the hitting time of $\Z \cap (-\infty,(3/4)h_4(n)]$. For every $n$ large enough and every $a \le h_4(n)$,
\begin{equation}
\label{hitting2}
\P\Ll[ H_{3/4} < H_1 \ | \ {Z}^{(n)}_0 = a \Rr] \le e^{-(a - (3/4)h_4(n)) n^{2/7}}.
\end{equation}
\end{lemma}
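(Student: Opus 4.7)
The plan is to exhibit an exponential supermartingale for $Z^{(n)}$ on the interior $\{1,\ldots,h_4(n)-1\}$ and then deduce both estimates by optional stopping. Set $\alpha = n^{2/7}$ and $f(k) = e^{-\alpha k}$, and write $X$ for a $\mathrm{Binomial}(h_4(n), e^{-n^{1/3}})$ random variable, so that the downward increments in \eqref{def:Z} are distributed as $X$ (and, conditional on $X = 0$, the chain makes an additional $+1$ upward step with probability $\frac{\bar c \sigma}{2}\theta^{2m_1}$).

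A direct computation from \eqref{def:Z} gives, for any $k \in \{1,\ldots,h_4(n)-1\}$,
$$
\frac{\E\bigl[f(Z^{(n)}_1) \mid Z^{(n)}_0 = k\bigr]}{f(k)} \;=\; \E\bigl[e^{\alpha X}\bigr] \;-\; p^{(n)}(0)\,\frac{\bar c \sigma}{2}\,\theta^{2m_1}\bigl(1 - e^{-\alpha}\bigr).
$$
The moment generating function of the binomial distribution yields
$$
\E\bigl[e^{\alpha X}\bigr] - 1 \;\le\; \exp\bigl(h_4(n)\,e^{-n^{1/3}+\alpha}\bigr) - 1 \;=\; \exp\bigl(e^{m_1\log d - n^{1/3} + n^{2/7}}\bigr) - 1,
$$
and since $m_1 = \lfloor n^{1/4}\rfloor$ and $n^{1/3}$ eventually dominates $n^{2/7} + n^{1/4}(\log d + 2\log(1/\theta))$, this quantity is $o(\theta^{2m_1})$ as $n \to \infty$. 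Meanwhile $p^{(n)}(0) \to 1$ and $1 - e^{-\alpha} \to 1$, so the subtracted term is asymptotic to $\frac{\bar c \sigma}{2}\theta^{2m_1}$ and swamps the first term. Hence, for $n$ large enough, the ratio above is at most $1$, and $f$ is a supermartingale on $\{1,\ldots,h_4(n)-1\}$.

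Set $T = H_0 \wedge H_1$ for \eqref{hitting} and $T = H_{3/4} \wedge H_1$ for \eqref{hitting2}; the chain leaves the interior in geometric time, so $T < \infty$ almost surely. Because $f \ge 0$, Fatou's lemma applied to the stopped supermartingale $(f(Z^{(n)}_{t\wedge T}))_{t \ge 0}$ yields $\E[f(Z^{(n)}_T) \mid Z^{(n)}_0 = a] \le f(a) = e^{-\alpha a}$. On $\{H_0 < H_1\}$ we have $Z^{(n)}_T \le 0$, hence $f(Z^{(n)}_T) \ge 1$ and \eqref{hitting} follows. On $\{H_{3/4} < H_1\}$ we have $Z^{(n)}_T \le (3/4)h_4(n)$, hence $f(Z^{(n)}_T) \ge e^{-\alpha(3/4)h_4(n)}$, and rearrangement gives \eqref{hitting2}, which is trivial when $a \le (3/4)h_4(n)$.

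The only delicate point is the calibration of $\alpha$. It must be large enough that the right-hand sides of \eqref{hitting} and \eqref{hitting2} realise the claimed exponent $n^{2/7}$, yet small enough that the binomial downward moment $\E[e^{\alpha X}] - 1 \lesssim h_4(n)\,e^{\alpha - n^{1/3}}$ is beaten by the tiny upward drift $\sim \theta^{2m_1} = e^{-\Theta(n^{1/4})}$. The gap between the scale $n^{1/3}$ coming from the binomial success probability and the scale $n^{1/4}$ coming from $m_1$ leaves room for any $\alpha$ of intermediate polynomial order; $\alpha = n^{2/7}$ is a convenient choice in this window. A subtlety worth noting is that $Z^{(n)}$ can jump down arbitrarily far in a single step, so one relies on the binomial MGF rather than on any bounded-increment argument, but this is precisely what the computation above absorbs.
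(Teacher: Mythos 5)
Your proof is correct and takes essentially the same route as the paper: both arguments establish that the exponential test function $f(k)=e^{-n^{2/7}k}$ is a (super)Lyapunov function for the chain on $\{1,\dots,h_4(n)-1\}$, and then read off the hitting estimates. The only cosmetic differences are that the paper first dominates the Binomial by a Poisson and phrases the conclusion via the maximum principle for a comparison chain, whereas you compute the Binomial MGF directly and invoke optional stopping (with the necessary Fatou justification for the unbounded downward jumps, which you correctly flag) — these are the same core argument in different dress.
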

\begin{proof}
Recall that the transition probabilities of $Z^{(n)}$ in \eqref{def:Z} are defined in terms of $p^{(n)}$, the probability mass function of a Binomial($h_4(n),e^{-n^{1/3}}$). Clearly, a Bernoulli random variable with parameter $p$ is stochastically dominated by a Poisson random variable with parameter $-\log(1-p)$. Hence, a Binomial($h_4(n),e^{-n^{1/3}}$) is stochastically dominated by a Poisson random variable with parameter 
$$
-h_4(n) \log(1-e^{-n^{1/3}}) \le e^{-2n^{2/7}} =: \uplambda_n,
$$
provided $n$ is sufficiently large.
Let $\ov{p}^{(n)}$ be the probability mass function of a Poisson random variable with parameter $\uplambda_n$, and consider the Markov chain $\ov{Z}^{(n)}$ whose transition probabilities are defined by
$$
\begin{array}{l} 
\underline{\text{for } k < h_4(n):}\medskip\\ P(k, k+1) = \theta^{4m_1};\medskip\\
P(k,k) = \ov{p}^{(n)}(0) - \theta^{4m_1};\medskip\\
P(k, k-a) = \ov{p}^{(n)}(a),\; a > 0;
\end{array} \qquad 
\begin{array}{l}\underline{\text{for }k = h_4(n):}\medskip\\ P(k, k-a) = \ov{p}^{(n)}(a),\; a \ge 0;\;\\[+0.35cm] \; \\ \; \\
\end{array}
$$
Compared with the definition of $Z^{(n)}$, we replaced $p^{(n)}$ by $\ov{p}^{(n)}$ for the jumps to the left, decreased the probability to jump to the right (for $n$ sufficiently large), and changed the definition of the Markov chain over $\Z_-$ for convenience. Clearly, $Z^{(n)}$ is stochatically dominated by $\ov{Z}^{(n)}$ until reaching $\Z_-$, so it suffices to prove Lemma~\ref{l:hitting} with $\ov{Z}^{(n)}$ in place of ${Z}^{(n)}$.

Let us write $\mcl{L}$ for the generator of the Markov chain $\ov{Z}^{(n)}$, that is,
$$
\mcl{L}f(x) = \theta^{4m_1} (f(x+1) - f(x)) + e^{-\uplambda_n} \sum_{a = 1}^{+\infty}  \frac{\uplambda_n^a}{a!}(f(x-a)-f(x))    \qquad (x < h_4(n)).
$$
Let $f(a)$ be the left-hand side of \eqref{hitting}, and $\td{f}(a) = e^{-an^{2/7}}$. For $a \in \Z \cap (0,h_4(n))$, we have $\mcl{L} f(a) = 0$. On the other hand, for such $a$, we have
$$
\mcl{L}\td{f}(x) = \theta^{4m_1} (e^{-n^{2/7}} -1)\td{f}(x) + e^{-\uplambda_n} \sum_{a = 1}^{+\infty} \frac{\uplambda_n^a}{a!}(e^{an^{2/7}} - 1) \td{f}(x).
$$
Recalling that $\uplambda_n = e^{-2n^{2/7}}$, we get
$$
e^{-\uplambda_n} \sum_{a = 1}^{+\infty} \frac{\uplambda_n^a}{a!}(e^{an^{2/7}} - 1)  \le \sum_{a = 1}^{+\infty} \frac{e^{-an^{2/7}}}{a!} = \exp\Ll(e^{-n^{2/7}}\Rr) - 1,
$$
so that
$$
\mcl{L}\td{f}(x) \le \Ll[ \theta^{4\lfloor n^{1/4} \rfloor}\Ll( e^{-n^{2/7}} -1 \Rr) + \exp\Ll(e^{-n^{2/7}}\Rr) - 1 \Rr] \td{f}(x).
$$
The square brackets above behave like $-\theta^{4\lfloor n^{1/4} \rfloor}$ to leading order. In particular, $\mcl{L}\td{f}(x) \le 0$ for all large enough $n$. As a consequence, $\mcl{L}(f - \td{f}) \ge 0$ on $\Z \cap (0,h_4(n))$. By the maximum principle,
$$
\max_{\Z \cap (0,h_4(n))} (f-\td{f}) \le \max_{\Z_- \cup \{h_4(n)\}} (f-\td{f}) = 0,
$$
and this proves \eqref{hitting}. The proof of \eqref{hitting2} is identical.
\end{proof}

\begin{proofof}\textit{Lemma~\ref{lem:rwest}}.
We will actually show the stronger statements that
\begin{align}
&\P\Ll[\inf_{i \in [0,\; e^{h_5(n)}]}\left.Z^{(n)}_i \leq (3/4)h_4(n) \;\right|\; Z^{(n)}_0 = h_4(n)\Rr] \leq e^{-h_4(n)} \label{eq:rwest1s}\\
&\P\Ll[\inf_{i \in [e^{h_8(n)},\; e^{h_5(n)}]}\left.Z^{(n)}_i \leq (3/4)h_4(n) \;\right|\; Z^{(n)}_0 = a\Rr]  \leq e^{-a} \quad \forall a \in \{1, \ldots, h_4(n)\}.\label{eq:rwest2s}
\end{align}
Let us consider \eqref{eq:rwest1s} first. We can decompose the trajectory into a sequence of excursions from $h_4(n)$. Until time $e^{h_5(n)}$, there can happen no more than $e^{h_5(n)}$ excursions. Among $e^{h_5(n)}$ excursions, the probability that at least one excursion starts with a jump to a point below $(7/8) h_4(n)$ is smaller than
$$
e^{h_5(n)} \sum_{a = (1/8)h_4(n)}^{+\infty} p^{(n)}(a).
$$
This is smaller than the same quantity with $p^{(n)}$ replaced by $\ov{p}^{(n)}$, where $\ov{p}^{(n)}$ was introduced in the proof of Lemma~\ref{l:hitting}, that is,
$$
e^{h_5(n)} e^{-\uplambda_n} \sum_{a = (1/8)h_4(n)}^{+\infty} \frac{\uplambda_n^a}{a!} \le e^{h_5(n)-\uplambda_n} \lambda_n^{(1/8)h_4(n)}\sum_{a = 0}^{+\infty} \frac{\uplambda_n^a}{a!} =  e^{h_5(n)} \lambda_n^{(1/8)h_4(n)}.
$$
Using the fact that $h_4(n)$ is much larger than $h_5(n)$ while $\uplambda_n$ tends to $0$, we get that the quantity above is much smaller than
$$
\uplambda_n^{(1/16) h_4(n)} \le e^{-2h_4(n)}
$$
for $n$ sufficiently large. 

Outside of this event of very small probability, we know that each of the first $e^{h_5(n)}$ excursions starts with a jump to a point above $(7/8) h_4(n)$. By \eqref{hitting2}, we know that starting from such a point, the probability to reach $(3/4) h_4(n)$ before $h_4(n)$ is smaller than $e^{-(1/8)h_4(n) n^{2/7}}$.
Since
$$
e^{h_5(n)} e^{-(1/8)h_4(n) n^{2/7}} \le e^{-2h_4(n)},
$$
we have completed the proof of \eqref{eq:rwest1s}.

For the proof of \eqref{eq:rwest2s}, observe first that the walk moves on a time scale of order $\theta^{-2\lfloor n^{1/4} \rfloor}$, which is much smaller than $e^{h_8(n)}$ (more precisely, we note that $e^{h_8(n)}/ \theta^{-2\lfloor n^{1/4} \rfloor} \gg h_4(n)$), so one can check that outside of an event whose probability is much smaller than $e^{-h_4(n)}$, the walk reaches $\{h_4(n)\} \cup \Z_-$ before time $e^{h_8(n)}$. Moreover, by \eqref{hitting}, the probability to reach $\Z_-$ before reaching $h_4(n)$ is bounded by $e^{-an^{2/7}}$.  Once $h_4(n)$ is reached, we can use the Markov property and the reasoning leading to \eqref{eq:rwest1s} to conclude.
\end{proofof}

\end{document}